\newtheorem{thm}{Theorem}[section]
\newtheorem{lm}[thm]{Lemma}
\theoremstyle{definition}
\newtheorem{df}[thm]{Definition}
\newtheorem*{df*}{Definition}
\theoremstyle{remark}
\newtheorem{rem}[thm]{Remark}
\newtheorem*{rem*}{Remark}
\numberwithin{equation}{section}
\newcommand{\ci}[1]{_{ {}_{\scriptstyle #1}}}
\newcommand{\ti}[1]{_{\scriptstyle \text{\rm #1}}}
\newcommand{\1}{\mathbf{1}}
\newcommand{\supp}{\operatorname{supp}}
\newcommand{\la}{\lambda}
\newcommand{\cB}{\mathcal{B}}
\newcommand{\cG}{\mathcal{G}}
\newcommand{\al}{\alpha}
\newcommand{\cz}{Calder\'{o}n--Zygmund\ }
\newcommand{\C}{\mathbb{C}}
\newcommand{\Z}{\mathbb{Z}}
\newcommand{\dist}{\operatorname{dist}}
\newcommand{\Om}{\Omega}
\newcommand{\om}{\omega}
\newcommand{\bP}{\mathbf{P}}
\newcommand{\sA}{\mathfrak{A}}
\newcommand{\E}{\mathbb{E}}
\newcommand{\bE}{\mathbf{E}}
\newcommand{\R}{\mathbb{R}}
\newcommand{\QQ}{[w]_{A_2}}
\newcommand{\ch}{\operatorname{ch}}
\newcommand{\ran}{\operatorname{Ran}}
\DeclareMathOperator*{\esssup}{ess\,sup}
\newcommand{\wt}{\widetilde}
\newcommand{\La}{\langle}
\newcommand{\Ra}{\rangle}
\newcommand{\N}{\mathbb{N}}
\newcommand{\cD}{\mathscr{D}}
\newcommand{\ccA}{\mathscr{A}}
\newcommand{\cQ}{\mathcal{Q}}
\newcommand{\cP}{\mathcal{P}}
\newcommand{\cA}{\mathcal{A}}
\newcommand{\cE}{\mathcal{E}}
\newcommand{\gw}{{\text{good},\omega}}
\def\cyr{\fontencoding{OT2}\fontfamily{wncyr}\selectfont}
\DeclareTextFontCommand{\textcyr}{\cyr}
\newcommand{\sha}[0]{\ensuremath{\mathbb{S}
}}
\renewcommand{\labelenumi}{(\roman{enumi})}
\newcounter{vremennyj}
\newcommand\cond[1]{\setcounter{vremennyj}{\theenumi}\setcounter{enumi}{#1}\labelenumi\setcounter{enumi}{\thevremennyj}}
\newcommand{\fdot}{\,\cdot\,}
\begin{document}

\title[$A_2$ conjecture]{Sharp weighted estimates for dyadic shifts and the $A_2$ conjecture}
\author{Tuomas Hyt\"onen}
\address{Department of Mathematics and Statistics, P.O.B. 68, FI-00014 University of Helsinki, Finland}
\email{tuomas.hytonen@helsinki.fi}
\thanks{Work of T.~Hyt\"onen is supported by the Academy of Finland under grants 130166, 133264 and 218148.}

\author{Carlos P\'erez}
\address{Department of mathematics, Universidad de Sevilla, Sevilla, Spain}
\email{carlosperez@us.es}
\thanks{Work of C.~P\'erez is  supported by the Spanish Research Council grant}
\author{Sergei Treil}
\thanks{Work of S.~Treil is supported  by the National Science Foundation under the grant  DMS-0800876. 
}
\address{Dept. of Mathematics, Brown University,   
151 Thayer
Str./Box 1917,      
 Providence, RI  02912, USA }
\email{treil@math.brown.edu}
\urladdr{http://www.math.brown.edu/\~{}treil}
\author{Alexander Volberg}
\thanks{Work of A.~Volberg is supported  by the National Science Foundation under the grant  DMS-0758552. 
}
\address{Department of Mathematics, Michigan State University, East
Lansing, MI 48824, USA}
\email{volberg@math.msu.edu}
\urladdr{http://sashavolberg.wordpress.com}
\makeatletter
\@namedef{subjclassname@2010}{
  \textup{2010} Mathematics Subject Classification}
\makeatother

\subjclass[2010]{42B20, 42B35, 47A30}



%
%

\keywords{\cz operators, $A_2$ weights, Carleson embedding theorem, Corona decomposition, stopping time,
   nonhomogeneous Harmonic Analysis.}
\date{}

\begin{abstract}
We give a  self-contained proof of the $A_2$ conjecture, which claims that the norm of \emph{any} \cz operator is bounded by the first degree of the $A_2$ norm of the weight. The original proof of this result by the first author relied on a subtle and rather difficult  reduction to a testing condition by the last three authors. Here we replace this reduction by a new weighted norm bound for dyadic shifts --- linear in the $A_2$ norm of the weight and quadratic in the complexity of the shift ---,  which is based on a new quantitative two-weight inequality for the shifts. These sharp one- and two-weight bounds for dyadic shifts are the main new results of this paper. They are obtained by rethinking the corresponding previous results of Lacey--Petermichl--Reguera and Nazarov--Treil--Volberg. 
To complete the proof of the $A_2$ conjecture, we also provide a simple variant of the representation, already in the original proof, of an arbitrary \cz operator as an average of random  dyadic shifts and random dyadic paraproducts. This method of the representation amounts to the refinement of the techniques from nonhomogeneous Harmonic Analysis. 
\end{abstract}

\maketitle

\section{Introduction}
\label{intro}


A \emph{\cz operator} in $\R^d$ is an integral operator, bounded in $L^2$ and with kernel $K$ satisfying the following growth and  smoothness conditions 
\begin{enumerate}
\item    $\displaystyle |K(x, y)|   \le \frac{C\ti{cz}}{|x-y|^d}$ for all $x, y\in \R^d$, $x\ne y$. 

\item There exists $\alpha>0$ such that 
\[
 |K(x, y) - K(x', y) | + |K(y, x) - K(y, x')| \le C\ti{cz} \frac{|x-x'|^{\alpha}}{|x-y|^{d+\alpha}}
\] 
for all $x, x', y\in\R^d$ such that $|x-x'|<|x-y|/2$.     
\end{enumerate}

It is well known that a \cz operator is bounded in the weighted space $L^2(w)$ if (and for many \cz operators only if) the weight $w$ satisfies the famous Muckenhoupt $A_2$ condition 
\begin{equation}
\label{A2cond}
\sup_{Q} \left(|Q|^{-1} \int_Q w dx \right) \left(|Q|^{-1} \int_Q w^{-1} dx \right) =: [w]\ci{A_2} <\infty.
\end{equation}
The quantity $[w]\ci{A_2}$ is called the \emph{Muckenhoupt norm} of the weight $w$ (although it is definitely not a norm). 

It has been an old problem to describe how the norm of a \cz operator in the weighted space $L^2(w)$ depends on the Muckenhoupt norm $[w]\ci{A_2}$ of $w$. A conjecture was that for a fixed \cz operator $T$ its norm is bounded by $C\cdot [w]\ci{A_2}$, where the constant $C$ depends on the operator $T$ (but not on the weight $w$). Simple counterexamples demonstrate that for the classical operators like Hilbert Transform or Riesz Transform, a better estimate than $C\cdot [w]\ci{A_2}$ is not possible. 

This linear (in $[w]\ci{A_2}$) estimate of the norm has become known as the \emph{$A_2$ conjecture}. 

For the maximal function, the estimate  $C\cdot [w]\ci{A_2}$ was proved by S.~Buckley \cite{Buck1}: he also proved that this estimate is optimal for the maximal function. 
The first result for a singular ``integral'' operator was due to J.~Wittwer \cite{Wit}, who proved the  $A_2$ conjecture for the Haar mutipliers. The same result for Beurling--Ahlfors Transform (convolution with $\pi^{-1} z^{-2}$ in $\C$) was obtained first by Petermichl--Volberg \cite{PetmV} by using the combination of Bellman function technique and the heat extension,  and later by Dragicevic--Volberg \cite{Dr-Volb-A2_Beurling_2003} via the representation of the Beurling--Ahlfors  Transform as an average of Haar multipliers over all dyadic lattices.  

This result was used in \cite{PetmV} to answer positively an important question in the theory of quasiconformal maps, see \cite{AIS}, about whether a weakly quasiregular map is quasiregular (or equivalently whether there is a self-improvement of a solution of the Beltrami equation in the case of critical exponent).

Then S.~Petermichl \cite{Petm1} proved the $A_2$ conjecture for the Hilbert transform, again using the representation of the Hilbert Transform as an average of copies of a simple dyadic operator (the so-called dyadic, or Haar, shift of complexity $1$).  

We should mention here an earlier paper by R.~Fefferman and J.~Pipher  \cite{FP}, where a linear estimate in terms of stronger $A_1$ norm of the weight $w$ was obtained for he Hilbert Transform. This result found its application in geometric questions pertinent to multi-parameter Harmonic Analysis, in particular for singular operators on Heisenberg group. The result in \cite{Petm1} is a considerable strengthening of Fefferman--Pipher's theorem.

A recent paper \cite{LPR} by M.~Lacey, S.~Petermichl and M.~Reguera established the $A_2$ conjecture for general dyadic shifts. 
Another proof of the linear bound for dyadic shifts was obtained in Cruz-Uribe--Martell--P\'erez \cite{CUMP1}, \cite{CUMP2} in a very beautiful and concise approach based on a remarkable ``formula'' by  Lerner \cite{Ler1}.
Thus, the conjecture was proved  for all operators which can be represented by taking for each dyadic grid a sum of finitely many dyadic shifts of uniformly bounded complexity (see definition below) and taking the average over all grids.

In particular, as it was shown by A.~Vagharshakyan \cite{Vagh_HaarShift_2009}, any convolution \cz operator on the real line $\R$  with sufficiently smooth kernel can be obtained by averaging copies of just one Haar shift, so the $A_2$ conjecture holds for such operators.  

Note that estimates of the norms of the dyadic shifts  obtained in  \cite{LPR} and in cite{CUMP1}, \cite{CUMP2} grew exponentially in the complexity of the shift, so it was only possible to estimate the \cz operators obtained by averaging of finitely many such shifts.

Using  linear estimates for the dyadic shifts and  a special decomposition (in the form proposed by Xiang \cite{X}) of a \cz operator 
 Hyt\"onen--Lacey--Reguera--Sawyer--Vagharshakyan--Uriarte-Tuero in \cite{HLRSUTV}  proved $A_2$ conjecture for all \cz operator with sufficiently smooth kernels (the smoothness was dependent on the dimension in \cite{HLRSUTV}). However, the problem for general \cz operator  required (as we shall see) some probabilistic ideas rooted in non-homogeneous Harmonic Analysis \cite{NTV1},  \cite{NTV5} (see also the lecture notes \cite{V}).

For general \cz operators, the last three authors \cite{PTV1} reduced the $A_2$ conjecture to a weak type estimate by establishing the inequality 
\[
\|T\|\ci{L^2(w)\rightarrow L^2(w)} 
\le C\,\left(\QQ+ \|T\|\ci{L^2(w)\rightarrow L^{2,\infty}(w)}+ \|T'\|\ci{L^2(w^{-1})\rightarrow L^{2,\infty}(w^{-1})} \right).
\] 
In \cite{PTV1} it is also shown that  $A_2$ conjecture is equivalent to  getting the linear in $[w]_{A_2}$ estimate on simplest test functions (this is a $T(1)$ theorem in the presence of weight).
Using this result of P\'erez--Treil--Volberg and the technique developed in \cite{LPR} the first author in \cite{H} was able to prove the $A_2$ conjecture for general \cz operators, i.e., the following theorem:

\begin{thm}[\cite{H}]
\label{A2}
Let $T$ be a \cz operator and $w$ be an $A_2$ weight. Then 
\[
\|T f\|\ci{L^2(w)} \le C\cdot [w]_{A_2} \| f\|\ci{L^2(w)}, 
\]
where the constant $C$ depends only on the dimension $d$, the parameters $C\ti{cz}$, $\alpha$ of the \cz operator and its norm in the non-weighted $L^2$. 
\end{thm}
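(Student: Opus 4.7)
The plan is to attack Theorem~\ref{A2} by reducing a general \cz operator to a controlled sum of model dyadic operators, and then to give sharp weighted estimates for each model operator. Concretely, I would prove two separate statements and combine them: (a) a \emph{representation} of $T$ as an expectation, over random dyadic grids $\cD^\omega$, of a sum of dyadic shifts of every complexity $(m,n)$ plus two paraproducts, with the coefficient of the complexity-$(m,n)$ shift decaying like $C\ti{cz}\, 2^{-\alpha(m+n)/2}$ thanks to the smoothness of the kernel; and (b) a sharp weighted bound $\|\sha^{m,n}_\omega\|_{L^2(w)\to L^2(w)} \le C\,(1+\max(m,n))^2\, [w]_{A_2}$ for each dyadic shift of complexity $(m,n)$. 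Since the quadratic growth in $(m,n)$ on the shift side is beaten by the exponential decay in $(m,n)$ on the representation side, summing will yield the linear-in-$[w]_{A_2}$ bound on $T$.

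For the representation, I would follow the Nazarov--Treil--Volberg good/bad cube formalism. Fixing a random pair of dyadic grids, I would expand $\La Tf,g\Ra$ in the associated Haar bases and split the sum over Haar pairs $(I,J)$ according to whether both cubes are ``good'' in the NTV sense. The bad part vanishes in expectation, and the good part reorganises into $\sum_{m,n\ge 0}$ of a shift of complexity $(m,n)$ whose coefficients are controlled by the H\"older modulus of $K$; this is where the factor $2^{-\alpha(m+n)/2}$ comes out. The ``diagonal'' pairs, where $I$ and $J$ share a common ancestor close to their own scale, are collected into two dyadic paraproducts associated to $T\1$ and $T^*\1$, which are BMO by the $L^2$-boundedness of $T$. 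This is a streamlined version of the representation already employed in \cite{H}.

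For the shift bound, which is the main new analytic ingredient, I would follow the Lacey--Petermichl--Reguera strategy but recast through a two-weight $T(1)$-type testing theorem \`a la Nazarov--Treil--Volberg. The point is that $\|\sha^{m,n}\|_{L^2(w)\to L^2(w)}$ is controlled by the testing quantities $\sup_Q w^{-1}(Q)^{-1/2}\|\sha^{m,n}(w^{-1}\1_Q)\|_{L^2(w)}$ and their dual, up to a factor of the two-weight $A_2$ characteristic. These testing inequalities would be proved by a corona/stopping-time decomposition built on the pair $(w,w^{-1})$: one chooses stopping cubes where the averages of $w$ or $w^{-1}$ double, and on each corona one estimates the contribution of $\sha^{m,n}$ by a quantitative Carleson embedding. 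The critical point is that the $2^{m+n}$ descendants on which a single Haar coefficient of $\sha^{m,n}$ is spread must be accounted for in such a way that only a polynomial factor $(1+\max(m,n))^2$, rather than an exponential one, is paid.

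The principal obstacle is precisely this polynomial-complexity dependence. Going back through the stopping-time construction and replacing the geometric multiplicity bounds of \cite{LPR} by a sharper counting argument tailored to long shifts --- together with a quantitative two-weight Carleson embedding adapted to shifts of complexity $(m,n)$ --- is the main technical task I foresee. Once the shift bound is in hand, the remainder is routine: the linear-in-$[w]_{A_2}$ estimate for a dyadic paraproduct is available (Beznosova's Carleson embedding lemma), and averaging the representation over $\omega$ and summing
\[
\sum_{m,n\ge 0} 2^{-\alpha(m+n)/2}(1+\max(m,n))^2 \cdot [w]_{A_2}
\]
against the shift bound yields $\|T\|_{L^2(w)\to L^2(w)}\le C\, [w]_{A_2}$, which is Theorem~\ref{A2}.
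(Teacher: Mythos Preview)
Your proposal is correct and follows essentially the same route as the paper: representation of $T$ as an average over random dyadic grids of dyadic shifts with coefficients decaying like $2^{-\alpha(m+n)/2}$ (Theorem~\ref{T=avg-shift}), reduction of the weighted bound for each shift to testing conditions via a quantitative two-weight $T(1)$ theorem (Theorem~\ref{sh2w}), and verification of the testing conditions by a corona/stopping-time argument yielding quadratic-in-complexity dependence (Theorem~\ref{t:sharp-shift-A2}). The paper's specific mechanism for the polynomial complexity---which you rightly flag as the crux---is a linear-in-complexity weak $L^1$ bound for dyadic shifts (Theorem~\ref{LPRnW}, from \cite{H}); also, the paper uses a single random grid with the Hyt\"onen averaging trick rather than a pair, and it absorbs the paraproducts into the same generalized-shift framework rather than appealing to Beznosova separately.
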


A crucial new element in \cite{H} was a clever averaging trick, allowing one to get rid of the so called \emph{bad} cubes and thus represent an arbitrary \cz operator as a weighted average of (infinitely many) dyadic shifts. This averaging trick was a development of the bootstrapping argument used by Nazarov--Treil--Volberg \cite{NTV5}, where they exploited the fact that the bad part of a function can be made arbitrarily small. Using the original Nazarov--Treil--Volberg averaging trick would add an extra factor depending on $[w]\ci{A_2}$ to the estimate, so a new idea was necessary. A new observation in \cite{H} was that as soon as the probability of a ``bad'' cube is less than $1$, it is possible to completely ignore the bad cubes (at least in the situation where they cause troubles).   

The preprint \cite{H}, which itself is neither short or very simple, relies of a rather technically involved preprint \cite{PTV1}. Thus the necessity of a simpler, direct proof, not using the reduction to the weak type estimates seems pretty evident.  

Such a direct proof of Theorem~\ref{A2} is presented in this paper; moreover,  we obtain new results on the dyadic shifts into which the \cz operator $T$ is decomposed. Indeed, the reduction of the $A_2$ conjecture to a testing condition, which in \cite{PTV1} was made on the level of the \cz operator $T$, is here performed on the more elementary level of the dyadic shifts in the representation of $T$. The possibility of such a simplification in the proof of the $A_2$ conjecture was suggested in \cite{H}, Sec.~8.A, and here we carry out this program in detail.

The main components of the proof are as follows:

\begin{enumerate}
    \item An averaging trick, which is a version of the one from \cite{H} (unlike \cite{H} we do not need {\it good} shifts here, and this simplifies the matter). This trick allows us not to worry about ``bad'' cubes and represent a general \cz operator as a weighted average of dyadic shifts with the weights decaying exponentially in the complexity of the shifts. 
    
    \item Sharp estimates, with all the constants written down, in the two weight $T(1)$ theorem from \cite{NTV6} in the setting of dyadic shifts (Theorem~\ref{sh2w}). Note, that while most of the necessary estimates were done in \cite{NTV6}, a formal application of the result from \cite{NTV6} would give an exponential (in complexity) growth of the norm. 
    
    To get the polynomial (in complexity) growth, one needs some non-trivial modifications. 
    For the convenience of the reader we present the complete proof, not only the modifications: only describing modifications and referring the reader to the proof in \cite{NTV6} would make the paper unreadable. 
    
    \item A modification of the proof from \cite{LPR}, which gives polynomial in complexity, instead of exponential, as in \cite{LPR}, bound for the weighted norm of the dyadic shift (Theorem~\ref{t:sharp-shift-A2}).  The main difference compared to \cite{LPR} is a better (linear in complexity instead of exponential) estimate of the (non-weighted) weak $L^1$ norm of a dyadic shift, which was obtained in \cite{H}. 
    
    The rest of the proof essentially follows the construction from \cite{LPR}, keeping track of constants, and clarifying  parts of the proof that were  presented there in a sketchy way.  We note that a variant of such a modification of \cite{LPR} already appeared in \cite{H}, where it was used to verify the required testing conditions for $T$, but not an explicit norm bound for the shifts themselves.
\end{enumerate}

Aside from the new self-contained proof of Theorem~\ref{A2}, the above-mentioned Theorems~\ref{sh2w} and \ref{t:sharp-shift-A2}, giving sharp quantitative two-weight and one-weight bounds for dyadic shifts, are the main new results of this paper.

\section{Dyadic lattices and martingale difference decompositions. Random dyadic lattices}

\subsection{Random dyadic lattices}
\label{s:RDL}
The standard dyadic system in $\R^d$ is
\begin{equation*}
  \cD^{0}:=\bigcup_{k\in\Z}\cD^{0}_k,\qquad
  \cD^{0}_k:=\big\{ 2^{k}\big([0,1)^d+m\big):m\in\Z^d\big\}.
\end{equation*}
For $I\in\cD_k^0$ and a binary sequence $\om=(\om_j)_{j=-\infty}^{\infty}\in(\{0,1\}^d)^{\Z}$, let 
\begin{equation*}
  I\dot+\om:=I+\sum_{j<k}\om_j 2^{j}.
\end{equation*}
Following Nazarov, Treil and Volberg \cite[Section 9.1]{NTV5},  consider general dyadic systems of the form
\begin{equation*}
  \cD=\cD^{\om}:=\{I\dot+\om:I\in\cD^0\}
  =\bigcup_{k\in\Z}\cD^{\om}_k.
\end{equation*}
Given a cube $I=x+[0,\ell)^d$, let
\begin{equation*}
   \ch(I):=\{x+\eta\ell/2+[0,\ell/2)^d:\eta\in\{0,1\}^d\} 
\end{equation*}
denote the collection of dyadic children of $I$. Thus $\cD^{\om}_{k-1}=\bigcup\{\ch(I):I\in\cD^{\om}_k\}$. Note that, in line with \cite{NTV5} but contrary to \cite{H}, we use the ``geometric'' indexing of cubes, where larger $k$ refers to larger cubes, rather than the ``probabilistic'' indexing, where larger $k$ would refer to finer sigma-algebras.

Consider the standard probability measure on $\{0,1\}^d$, which assigns equal probability $2^{-d}$ to every point. Define the measure $\bP$ on $(\{0,1\}^d)^\Z$ as the corresponding product measure. 

\subsection{Martingale difference decompositions and Haar functions}

For a cube $I$ in $\R^d$ let 
\[
\E\ci I f := \left(\fint_I f dx\right) \1\ci I := \left(|I|^{-1}\int_I f dx\right) \1\ci I, \qquad \Delta\ci I :=-\E\ci I + \sum_{J\in \ch(I)} \E\ci{J} . 
\]
It is well known that for an arbitrary dyadic lattice $\cD$ every function $f\in L^2(\R^d)$ admits the orthogonal decomposition 
\[
f= \sum_{I\in\cD} \Delta\ci I f. 
\]

We also need the weighted martingale difference decomposition. Let $\mu$ be a Radon measure on $\R^d$. Define the weigted expectation and martingale differences as
\[
\E^\mu\ci I f := \left((\mu(I))^{-1}\int_I f d\mu\right) \1\ci I, \qquad \Delta^\mu\ci I :=-\E^\mu\ci I + \sum_{J\in \ch(I)} \E^\mu\ci{J};  
\]
for the definiteness we set $\E^\mu\ci I f = 0$ if $\mu(I)=0$. 

For an arbitrary dyadic lattice $\cD$ and $k\in\Z$, any function  $f\in L^2(\mu)$ admits an orthogonal decomposition 
\begin{equation}
\label{mdd-mu}
f = \sum_{I\in\cD: \ell(I) =2^k} \E^\mu\ci I f + \sum_{I\in\cD: \ell(I) \leq 2^k} \Delta^\mu\ci I f
\end{equation}

Given a cube $Q$ in $\R^d$, any function in the martingale difference space $\Delta\ci Q L^2$ is called a Haar function (corresponding to $Q$) and is usually denoted by  $h\ci Q$. Note, that $h\ci Q$ denotes a \emph{generic} Haar function, not any particular one. 

A \emph{generalized} Haar function $h\ci Q$ is a linear combination of a Haar function and $\1\ci Q$. In other words,  a generalized Haar function $h\ci Q$ is constant on the children of $Q$, but unlike the regular Haar function it is not orthogonal to constants. 

Similarly a function $h\in \Delta^\mu\ci Q L^2(\mu)$  is called a weighted Haar function and is denoted as $h^\mu\ci Q$.

\section{Dyadic shifts. A sharp two weight estimate}
\label{DS2w}

\begin{df} An unweighted dyadic paraproduct is an operator $\Pi$ of the form
\[
\Pi f =\sum_{Q\in \cD} (\E\ci Q f )  h\ci Q\,,
\]
where $h\ci Q$ are some (non-weighted) Haar functions. 
\end{df}

\begin{df}
 Let $m, n\in \N$.  An elementary dyadic shift with parameters $m$, $n$ 
is an operator given by
\[
\sha f := \sum_{Q\in \cD}\sum_{\substack{ Q', Q''\in \cD,  Q', Q''\subset Q, \\ \ell(Q')=2^{-m}\ell(Q),\, \ell(Q'') =2^{-n}\ell(Q)}} |Q|^{-1}(f, h_{Q'}^{ Q''}) h_{Q''}^{Q'}
\]
where $h_{Q'}^{ Q''}$ and $h_{Q''}^{Q'}$ are (non-weighted) Haar functions for the cubes $Q'$ and $Q''$ respectively, 
subject to normalization
\begin{equation}
\label{norm1}
\|h_{Q'}^{ Q''}\|_\infty\cdot \|h_{Q''}^{Q'}\|_\infty \le 1.
\end{equation}
Notice that this implies, in particular, that
\begin{equation}
\label{sha1}
\sha f(x)=\sum_{Q\in \cD} |Q|^{-1} \int_Q a\ci Q(x,y)f(y) dy\,,\qquad \supp a\ci Q\subset Q\times Q, \ \|a\ci Q\|_{\infty}\le 1\,,
\end{equation}
where 
\begin{equation}
\label{sha-aQ}
a\ci Q (x,y) = \sum_{\substack{ Q', Q''\in \cD,  Q', Q''\subset Q, \\ \ell(Q')=2^{-m}\ell(Q),\, \ell(Q'')
=2^{-n}\ell(Q)}} h_{Q''}^{Q'}(x) h_{Q'}^{Q''}(y). 
\end{equation}
The number $\max(m,n)$ is called the \emph{compexity} of the dyadic shift. 
\end{df}

\begin{df}
If in the above definition we allow some (or all) $h\ci{Q'}$, $h\ci{Q''}$ to be \emph{generalized} Haar functions, we get what we will call an \emph{elementary generalized dyadic shift}.

A  dyadic shift with parameters $m$ and $n$ is a sum of at most $(2^d)^2$ elementary dyadic shifts (with parameters $m$ and $n$). If we allow some (or all) of the elementary dyadic shifts to be \emph{generalized} ones, we get the \emph{generalized} dyadic shift. 
\end{df}

\begin{rem*}The paraproduct $\Pi$ is an elementary generalized dyadic shift with parameters $0$, $1$, provided that $ \|h\ci Q\|_\infty \le 1$ for all cubes $Q$. 
\end{rem*}

\begin{rem*}
The main difference between dyadic shifts and generalized ones is that a dyadic shift is always a bounded operator in $L^2$ (assuming the normalization \eqref{norm1}), while for the boundedness of a generalized dyadic shift some additional conditions are required. 
\end{rem*}

We always think that our dyadic shifts $\sha$ are {\it finite} dyadic shifts meaning that only finitely many $Q$'s are involved in its definition above. All estimates will be independent of this finite number.

In the present section we consider a two weight $T(1)$ theorem for dyadic shifts. We fix two measures $\mu$, $\nu$ on $\mathbb{R}^d$.  Finite dyadic shifts are integral operators with kernel
$$
A(x,y) =\sum_{Q\in \cD} a\ci Q(x,y),
$$
the sum being well defined as it is finite.
We define now
$$
\sha_{\mu} f (x):=\int A(x,y)f(y)\,d\mu(y),
$$ 
and its adjoint $\sha^*_\nu$
\[
\sha_\nu^* g (y) = \int \overline A(x, y) g(x) d\nu(x).
\]

We need the notation
$$
[\mu,\nu]_{A_2}:= \sup_I\langle \mu\rangle_I\langle \nu\rangle_I\,,
$$
where $\langle \sigma\rangle_I:= |I|^{-1}\sigma(I)$.

The following theorem is the first new main result of this paper. It is essentially a quantified version of Theorem 2.3 of \cite{NTV6}.

%
%
%
%
%
%

\begin{thm}
\label{sh2w}
Let $\sha$ be an elementary generalized dyadic shift with parameters $m$ and $n$. Let us suppose that there exists a constant $B$ such that for any $Q\in\cD$ we have
\begin{equation}
\label{T12wsh}
\int_Q |\sha_{\mu}\1_{Q}|^2 d\nu \le B\mu(Q)\,,\,\qquad  \int_Q |\sha_\nu^* \1_{Q}|^2 d\mu \le B \nu(Q)\,.
\end{equation}
Then  
\begin{equation}
\label{T1sh}
\|\sha_{\mu}f\|_{\nu} \le C \left( 2^{d/2} (r+1) \left(  B^{1/2}+ [\mu,\nu]_{A_2}^{1/2} \right) + r^2 [\mu, \nu]\ci{A_2}^{1/2} \right)\|f\|_{\mu}\,.
\end{equation}
where $r=\max(m,n)$, and $C$ is an absolute constant. 
\end{thm}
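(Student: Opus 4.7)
I would prove the bound by duality, estimating the bilinear form $\cB(f,g):=\langle \sha_\mu f, g\rangle_\nu$ for $f\in L^2(\mu)$ and $g\in L^2(\nu)$. Using \eqref{sha1}--\eqref{sha-aQ},
\[
\cB(f,g) = \sum_{Q\in\cD} |Q|^{-1}\!\!\sum_{\substack{Q',Q''\subset Q \\ \ell(Q')=2^{-m}\ell(Q) \\ \ell(Q'')=2^{-n}\ell(Q)}}\!\! \langle f, h_{Q'}^{Q''}\rangle_\mu\, \overline{\langle g, h_{Q''}^{Q'}\rangle_\nu}.
\]
Each unweighted pairing $\langle f, h_{Q'}^{Q''}\rangle_\mu$ is a linear combination of the weighted averages $\E^\mu_R f$ over the children $R$ of $Q'$. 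The first step is to telescope each $\E^\mu_R f$ as $\E^\mu_Q f$ plus a sum of $m+1$ weighted martingale increments $\Delta^\mu_I f$ with $R\subset I\subsetneq Q$, and symmetrically for the $\nu$-pairing, obtaining $n+1$ terms. Substituting and regrouping decomposes $\cB(f,g)$ into three families: (i) a \emph{principal part} built from $\E^\mu_Q f$ paired with $\overline{\E^\nu_Q g}$; (ii) two \emph{mixed paraproduct parts}, each coupling one averaged factor with one telescoping sum of weighted martingale differences; (iii) a \emph{fully non-diagonal part} pairing $\Delta^\mu_I f$ with $\Delta^\nu_J g$ for $I, J$ strictly between $Q$ and its descendants $Q', Q''$.

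Part (i) is where the testing hypothesis \eqref{T12wsh} enters: after re-assembly it becomes a sum of the form $\sum_Q \E^\mu_Q f\cdot \overline{\E^\nu_Q g}$ paired against $\sha_\mu\1_Q$ (or dually $\sha_\nu^*\1_Q$), which Cauchy--Schwarz together with \eqref{T12wsh} bounds by $C(B^{1/2}+[\mu,\nu]_{A_2}^{1/2})\|f\|_\mu\|g\|_\nu$; the incidental $[\mu,\nu]_{A_2}^{1/2}$ correction appears when boundary averages must be absorbed via $\langle \mu\rangle_Q\langle \nu\rangle_Q \le [\mu,\nu]_{A_2}$. Parts (ii) and (iii) involve only the $A_2$ characteristic: they are handled by combining the mutual orthogonality of $\{\Delta^\mu_I f\}_I$ in $L^2(\mu)$, the disjointness of fixed-depth descendants of a given $Q$, and the weighted Carleson embedding theorem applied to sequences of the form $\{|I|\E^\mu_I f\cdot \langle\nu\rangle_I^{1/2}\}$.

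The main obstacle is securing polynomial, not exponential, growth in the complexity $r=\max(m,n)$. A naive Cauchy--Schwarz distributing across the $m+1$ and $n+1$ telescoping scales would immediately yield a factor of order $2^r$. The essential saving is to sum in $\ell^2$ rather than $\ell^1$ across the intermediate scales, exploiting the orthogonality of the weighted martingale differences. With this ordering in place, one telescoping costs only a factor $(r+1)^{1/2}$ in the final Cauchy--Schwarz; performing it on one side accounts for the coefficient $(r+1)$ multiplying $B^{1/2}+[\mu,\nu]_{A_2}^{1/2}$ in parts (i)--(ii), while the fully non-diagonal part (iii), which carries both telescopings simultaneously, produces the coefficient $(r+1)^2\le C r^2$ multiplying $[\mu,\nu]_{A_2}^{1/2}$. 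Finally, the $2^{d/2}$ factor reflects the elementary bound $\|h_{Q'}\|_\infty\le 2^{d/2}\mu(Q')^{-1/2}\|h_{Q'}\|_{L^2(\mu)}$ arising from the $2^d$ children of $Q'$. Combining these three contributions yields \eqref{T1sh}.
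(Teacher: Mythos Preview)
Your decomposition is structurally different from the paper's, and it contains a genuine gap in the handling of part (i).

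The core issue is this: after you telescope both sides and isolate the ``principal part'' $\sum_Q (\E^\mu_Q f)(\overline{\E^\nu_Q g})\cdot[\,\cdot\,]$, the bracket attached to each $Q$ is only the \emph{single block} $|Q|^{-1}\iint a_Q(x,y)\,d\mu(y)\,d\nu(x)$, not the full shift $\sha_\mu\1_Q$. The testing hypothesis \eqref{T12wsh} controls $\1_Q\sha_\mu\1_Q$, which is a sum over \emph{all} cubes in $\cD$, not the contribution of the single kernel $a_Q$. There is no ``re-assembly'' that turns $\sum_Q (\E^\mu_Q f)(\overline{\E^\nu_Q g})\,|Q|^{-1}\iint a_Q\,d\mu\,d\nu$ into an expression governed by \eqref{T12wsh}; bounding it crudely by $\sum_Q|\E^\mu_Q f||\E^\nu_Q g|\,\mu(Q)\nu(Q)/|Q|$ lands you on the positive dyadic operator, which is \emph{not} controlled by $[\mu,\nu]_{A_2}$ alone and is not controlled by \eqref{T12wsh} either. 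Relatedly, your claim that the mixed paraproduct parts (ii) need only $[\mu,\nu]_{A_2}$ is backwards: paraproduct-type sums coupling $\E^\mu_Q f$ with martingale differences of $g$ are exactly where one invokes a Carleson condition, and that Carleson condition comes from the testing hypothesis.

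The paper's route avoids this by never telescoping inside a single block. Instead it expands $f=\sum_Q\Delta^\mu_Q f$, $g=\sum_R\Delta^\nu_R g$ globally and introduces the weighted paraproduct $\Pi^\mu f=\sum_Q(\E^\mu_Q f)\sum_{R\subset Q,\,\ell(R)=2^{-r}\ell(Q)}\Delta^\nu_R\sha_\mu\1_Q$, built from the \emph{full} shift $\sha_\mu\1_Q$. A key lemma shows that $\langle\Pi^\mu\Delta^\mu_Q f,\Delta^\nu_R g\rangle_\nu=\langle\sha_\mu\Delta^\mu_Q f,\Delta^\nu_R g\rangle_\nu$ whenever $\ell(R)<2^{-r}\ell(Q)$, so $\Pi^\mu$ (and its dual analogue $\Pi^\nu$) absorb the entire off-diagonal part of the bilinear form. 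Because $\Pi^\mu$ is built from $\sha_\mu\1_Q$, the testing hypothesis \eqref{T12wsh} plugs directly into the Carleson embedding theorem to bound it by $2B^{1/2}$. What remains is a band of width $2r+1$ in the scale parameter, split into an interior part (bounded via a lemma estimating $\|\1_Q\sha_\mu h^\mu_Q\|_\nu$ by $2^{d/2}(B+4[\mu,\nu]_{A_2})^{1/2}\|h^\mu_Q\|_\mu$) and an outer part (bounded by $r[\mu,\nu]_{A_2}^{1/2}$ via Hilbert--Schmidt norms). Summing the $r+1$ scale differences produces the factor $(r+1)$, and combining gives \eqref{T1sh}. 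The point is that the testing condition must be applied to an object assembled from the full operator, not block-by-block.
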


The idea of the proof of this theorem  is quite simple. The operator $\sha^\mu$ is represented essentially as the sum of \emph{weighted paraproducts}, which are estimated using condition \eqref{T12wsh}  and the operator with finitely many diagonals, which is estimated by $C[\mu, \nu]_{A_2}^{1/2}$.

Take two test functions $f,g$. Using martingale difference decomposition \eqref{mdd-mu} we can decompose
\[
f = \sum_{Q\in\cD: \ell(I) =2^k} \E^\mu\ci Q f + \sum_{Q\in\cD: \ell(I) <2^k} \Delta^\mu\ci Q f, \qquad 
g = \sum_{Q\in\cD: \ell(I) =2^k} \E^\nu\ci Q g + \sum_{Q\in\cD: \ell(I) <2^k} \Delta^\nu\ci Q g .
\]

We want to estimate the bilinear form $\La \sha^\mu f, g \Ra_\nu$. We will first concentrate on the nontrivial case $f=\sum_{Q\in\cD} \Delta^\mu\ci Q f$, $g=\sum_{Q\in\cD} \Delta^\nu\ci Q g$; adding the terms $\sum_{Q\in\cD: \ell(I) =2^k} \E^\mu\ci Q f$ and $\sum_{Q\in\cD: \ell(I) =2^k} \E^\nu\ci Q g$ will be easy.%
\footnote{In fact, we will only apply this theorem in the situation when a martingale difference decompositions not involving $\E\ci Q^\mu$ and $\E\ci Q^\nu$ are possible.}

\subsection{Weighted paraproducts.}  Fix an integer $r$. Then   the paraproduct $\Pi^{\mu}=\Pi^{\mu}_{\sha}$, acting (formally) from $L^2(\mu)$ to $L^2(\nu)$ is defined as
\[
\Pi^{\mu} :=\sum_{Q\in \cD} \E^{\mu}_Q f \sum_{\substack{R\in \cD, R\subset Q, \\ \ell(R) =2^{-r}\ell(Q)}}  \Delta_R^{\nu}\sha_{\mu}  1_{Q}\,.
\]
The paraproduct $\Pi^{\nu}=\Pi^{\nu}_{\sha^*}$, acting (formally) from $L^2(\nu)$ to $L^2(\mu)$, is defined similarly
\[
\Pi^{\nu} :=\sum_{Q\in \cD} \E^{\nu}_Q f\sum_{\substack{R\in \cD, R\subset Q,\\ \ell(R) =2^{-r}\ell(Q)}} \Delta_R^{\mu}\sha_{\nu}^* 1_{Q}\,.
\]

Notice that if $r\ge n$, then for any $f\in L^1\ti{loc}(\mu)$ such that $f\bigm|_{Q} \equiv 1$, and for any $R\in\cD$ such that $ R\subset Q$ and $\ell(R)\le 2^{-r}\ell(Q)$, we have
\begin{equation}
\label{upping1}
\Delta_R^{\nu}\sha_{\mu}  f= \Delta_R^{\nu}\sha_{\mu}  \1_{Q}.
\end{equation}
Indeed, in the decomposition 
\[
\La \sha_\mu (\1\ci Q - f ), h^\nu\ci R \Ra_\nu = \sum_{I\in\cD} \sum_{\substack{I', I''\in\cD, \, I', I''\subset I \\ \ell(I') =2^{-m}\ell(I), \,\ell(I'')=2^{-n}\ell(I)} } \La \1\ci Q - f , h\ci{I'}\Ra_\mu
\La h\ci{I''}, h^\nu\ci R \Ra_\nu 
\]
only the terms with $I'\not\subset Q$ and $I''\subset R$ can give a non-zero contribution. But the inclusions $I''\subset R\subset Q$ together with size conditions  on $I''$ and $R$ imply that 
\[
\ell(I) = 2^{n}\ell(I'')\le 2^r \ell(I'')\le 2^r\ell(R) \le \ell(Q), 
\]
 so $I\subset Q$ (because $I\cap Q \supset I''\ne\varnothing$, so the inclusion of the dyadic cubes is determined by their sizes). But the inclusion $I\subset Q$ implies $I'\subset Q$, so the conditions $I'\not\subset Q$ and $I''\subset R$ are incompatible. 

The  equality \eqref{upping1} means that for $r\ge n$ we can  replace $\1\ci Q$ by $1$, bringing our definition of the paraproduct more in line with the classical one. 

\begin{lm}
\label{l2.2}
Let $Q,R\in\cD$, and let $r\ge n$. Then for the paraproduct $\Pi^\mu=\Pi^\mu_{\sha^*}$ 
defined above
\begin{enumerate}
    \item If $\ell(R) \ge  2^{-r}\ell(Q)$ then $\La \Pi^\mu h^\mu_Q, 
h^\nu_R\Ra_\nu = 0$ for all weighted Haar functions $h^\mu_Q$ and 
$h^\nu_R$.

\item If $R\not\subset Q$, then 
$\La \Pi^\mu h^\mu_Q, h^\nu_R\Ra_\nu = 0$ for all weighted Haar 
functions $h^\mu_Q$ and $h^\nu_R$.

\item If $\ell(R) <2^{-r} \ell(Q)$, then for all weighted Haar functions 
$h^\mu_Q$ and $h^\nu_R$
$$
\La \Pi^\mu h_Q^\mu, h_R^\nu\Ra_\nu = \La \sha_\mu h_Q^\mu, h_R^\nu\Ra_\nu;
$$
in particular, if $R\not\subset Q$, then both sides of the equality are 
$0$.

\end{enumerate}
\end{lm}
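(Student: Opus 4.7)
The plan is to unfold the defining sum of the paraproduct and determine which indices can produce a nonzero pairing against $h^\mu_Q$ and $h^\nu_R$. Writing
\[
\La \Pi^\mu h^\mu_Q, h^\nu_R \Ra_\nu = \sum_{Q'\in\cD} (\E^\mu_{Q'} h^\mu_Q) \cdot \sum_{\substack{R'\subset Q' \\ \ell(R')=2^{-r}\ell(Q')}} \La \Delta^\nu_{R'} \sha_\mu \1_{Q'}, h^\nu_R \Ra_\nu,
\]
two elementary observations trim the sum. First, orthogonality of the spaces $\Delta^\nu_{R'} L^2(\nu)$ together with $h^\nu_R \in \Delta^\nu_R L^2(\nu)$ forces $R' = R$. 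Second, since $h^\mu_Q$ is supported in $Q$ with $\mu$-mean zero there and is constant on the $\mu$-children of $Q$, the expectation $\E^\mu_{Q'} h^\mu_Q$ vanishes unless $Q' \subsetneq Q$, in which case it equals $c_{Q'}\1_{Q'}$ with $c_{Q'}:=h^\mu_Q|_{Q'}$ the common value of $h^\mu_Q$ on $Q'$.

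Thus every surviving index satisfies $R \subset Q' \subsetneq Q$ and $\ell(Q')=2^r\ell(R)$. In case~(i), $\ell(R)\ge 2^{-r}\ell(Q)$ yields $\ell(Q')\ge \ell(Q)$, incompatible with $Q'\subsetneq Q$; the sum is empty. In case~(ii), $R\subset Q'\subsetneq Q$ contradicts $R\not\subset Q$; again the sum is empty. Both vanishing statements follow.

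In case~(iii) with $R\subset Q$, the relations $\ell(R)<2^{-r}\ell(Q)$ and $\ell(Q')=2^r\ell(R)$ single out a unique contributing $Q'$, namely the dyadic ancestor of $R$ of side length $2^r\ell(R)$, which lies strictly inside $Q$. The pairing reduces to
\[
\La \Pi^\mu h^\mu_Q, h^\nu_R \Ra_\nu \;=\; c_{Q'}\La \Delta^\nu_R \sha_\mu \1_{Q'}, h^\nu_R\Ra_\nu \;=\; c_{Q'}\La \sha_\mu \1_{Q'}, h^\nu_R \Ra_\nu,
\]
using $h^\nu_R=\Delta^\nu_R h^\nu_R$ and self-adjointness of $\Delta^\nu_R$. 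To identify this with $\La \sha_\mu h^\mu_Q, h^\nu_R\Ra_\nu$, note that $g:=h^\mu_Q - c_{Q'}\1_{Q'}$ vanishes identically on $Q'$; since $r\ge n$ and $R\subset Q'$ with $\ell(R)\le 2^{-r}\ell(Q')$, the cancellation identity \eqref{upping1} (applied with $Q$ replaced by $Q'$) yields $\Delta^\nu_R \sha_\mu g = 0$. Hence $\La \sha_\mu h^\mu_Q, h^\nu_R\Ra_\nu = c_{Q'}\La \sha_\mu \1_{Q'}, h^\nu_R\Ra_\nu$, matching the paraproduct side.

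For the ``in particular'' part of case~(iii) with $R\not\subset Q$, the condition $\ell(R)<\ell(Q)$ forces $R\cap Q=\emptyset$. The left-hand side vanishes by case~(ii). For the right-hand side, use the kernel representation \eqref{sha-aQ}: a cube $I$ contributes only when $I\cap R\ne\emptyset$ and $I\cap Q\ne\emptyset$, and since $R\cap Q=\emptyset$ the dyadic structure forces $I\supsetneq Q$. The cubes $Q''$ appearing in \eqref{sha-aQ} then satisfy $\ell(Q'')=2^{-n}\ell(I)\ge 2^{1-n}\ell(Q)>2^{-r}\ell(Q)>\ell(R)$, using $r\ge n$. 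Any contributing $Q''$ therefore strictly contains $R$, whence $h_{Q''}^{Q'}$ is constant on $R$ and the inner integral $\int h_{Q''}^{Q'}\overline{h^\nu_R}\,d\nu$ vanishes by the $\nu$-mean-zero property of $h^\nu_R$. The main delicate step is the identification in case~(iii) driven by \eqref{upping1}; the remaining cases reduce to bookkeeping of dyadic scales and supports.
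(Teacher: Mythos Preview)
Your proof is correct and follows essentially the same approach as the paper: reduce the paraproduct sum to the single term with $R'=R$ and $Q'$ the $r$-th ancestor of $R$, then use that $\E^\mu_{Q'}h^\mu_Q$ vanishes unless $Q'\subsetneq Q$, and finally invoke the cancellation identity \eqref{upping1} for part~(iii). The only minor differences are cosmetic: in (iii) the paper passes through the child $Q_1$ of $Q$ containing $R$ (showing both sides equal $b\,\langle\sha_\mu\1_{Q_1},h^\nu_R\rangle_\nu$), whereas you stay with the ancestor $Q'$; and for the ``in particular'' clause the paper invokes \eqref{upping1} directly with the $r$-th ancestor $M$ of $R$ (noting $M\cap Q=\varnothing$), while you reprove that cancellation from the kernel representation \eqref{sha-aQ}.
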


\begin{proof}
Let us use $Q'$ and $R'$ for the summation indices in the paraproduct, 
i.e.~let us write
\[
\Pi^\mu h_Q^\mu := \sum_{Q'\in \cD} \E_{Q'}^\mu h_Q^\mu 
\sum_{\substack{R'\in \cD,\ R'\subset Q', \\ 
\ell(R')=2^{-r}\ell(Q')}} \Delta_{R'}^\nu \sha_\mu \1\ci{Q'}.
\]
Since $h^\nu_R$ is orthogonal to ranges of all projections 
$\Delta_{R'}^\nu$ except $\Delta_R^\nu$ we can write
\begin{equation}
\label{Pi-hq-hr}
\La \Pi^\mu h^\mu_Q, h_R^\nu\Ra_\nu = \La (E_{Q'}^\mu h^\mu_Q) \Delta_R^\nu 
\sha_\mu\1\ci{Q'}, h_R^\nu \Ra_\nu = a\La \sha_\mu\1\ci{Q'}, h_R^\nu \Ra_\nu 
\end{equation}
where $Q'$ is the ancestor of $R$ of order $r$ (i.e.~the cube $Q'\supset R$ such that $\ell(Q')= 2^r \ell(R)$) and $a$ is the value of $E\ci{Q'}^\mu h^\mu\ci{Q}$ on $Q'$, $E^\mu\ci{Q'} h^\mu\ci{Q} = a \1\ci{Q'}$.

It is easy to see that $E^\mu_{Q'} h^\mu_Q \not\equiv 0$ (equivalently $a\ne 0$) only if $Q'\subsetneqq 
Q$.  
Therefore, see \eqref{Pi-hq-hr},  
\[
\La \Pi^\mu h^\mu_Q, h_R^\nu\Ra_\nu\ne 0
\] 
only if $Q'\subsetneqq Q$
 and statements \cond1 and \cond2  of the lemma follow immediately. 

Indeed, if $\ell(R)\ge 2^{-r}\ell(Q)$ and $\ell(Q') =2^r \ell(R)$, the inclusion $Q'\subsetneqq 
Q$ is impossible, so  
\[
 \La \Pi^\mu h^\mu_Q, h_R^\nu\Ra_\nu = 0, 
\]
and the statement \cond1 is proved. 

If  $R\not\subset Q$, then the inclusion $Q'\subsetneqq 
Q$ (which, as it was discussed above, is necessary for \linebreak $\La \Pi^\mu h^\mu_Q, h_R^\nu\Ra_\nu\ne0$)
implies that $R\not\subset Q'$. This means that $Q'$ is not an ancestor of $R$, however \eqref{Pi-hq-hr} again shows that for $Q'$ to be an ancestor of $R$  is necessary for  $\La \Pi^\mu h^\mu_Q, h_R^\nu\Ra_\nu\ne0$.

Let us prove statement \cond3. Let $\ell(R)<2^{-r}\ell(Q)$. If $R\not\subset 
Q$ then by the statement \cond2 of the lemma $\La \Pi^\mu h^\mu_Q, h^\nu_R\Ra_\nu 
=0$. On the other hand if $M$ is the ancestor of order $r$ of $R$, then $Q\cap M=\varnothing$, thus by \eqref{upping1}
\[
\La \sha_\mu h^\mu_Q, h^\nu_R\Ra_\nu = \La \sha_\mu 0\cdot\1\ci M, h^\nu_R\Ra_\nu=0. 
\] 
 So, we only 
need to consider the case $R\subset Q$.

Let $Q_1$ be the ``child'' of $Q$ containing $R$ (i.e.~$R\subset 
Q_1\subset Q$, $\ell(Q_1) =\ell(Q)/2$), and let $b$ be the value of 
$h^\mu_Q$ on $Q_1$. Then, since $\ell(R) \le 2^{-r} \ell(Q_1)$, \eqref{upping1}  implies  that
\[
\La \sha_\mu h_{Q}^\mu , h^\nu_R\Ra_\nu = b \La \sha_\mu\1\ci{Q_1} , h^\nu_R\Ra_\nu
\]
On the other hand we have shown before, see \eqref{Pi-hq-hr} that
\[
\La \Pi^\mu h^\mu_Q, h_R^\nu\Ra_\nu = \La (E_{Q'}^\mu h^\mu_Q) \Delta_R^\nu 
\sha_{\mu}\1\ci{Q'}, h_R^\nu \Ra_\nu
\]
where $Q'\in\cD$ is the ancestor of order $r$ of $R$, meaning that $R\subset Q' 
$, $\ell(Q')=2^{r}\ell(R)$. Therefore 
$Q'\subset Q_1$ and so $E_{Q'}^\mu h_Q^\mu =b\1\ci{Q'}$. We also know, see \eqref{upping1}, that because $Q'\subset Q_1$ we have equality $\Delta_R^\nu 
\sha_\mu \1\ci{Q'} = \Delta_R^\nu \sha_\mu\1\ci{Q_1}$. Thus we can continue:
\[
\La \Pi^\mu h^\mu_Q, h_R^\nu\Ra_\nu  =
b\La  \Delta_R^\nu \sha_\mu \1\ci{Q'}, h_R^\nu \Ra_\nu = b\La  \Delta_R^\nu 
\sha_\mu\1\ci{Q_1}, h_R^\nu \Ra_\nu= b\La  \sha_\mu\1\ci{Q_1}, h_R^\nu \Ra_\nu.
\]
Therefore $\La \Pi^\mu h^\mu_Q, h_R^\nu\Ra_\nu = \La \sha_{\mu} h_{Q}^\mu , 
h^\nu_R\Ra_\nu$, and the lemma is proved.
\end{proof}

\subsection{Boundedness of the weighted  paraproduct}

We will need the following well known theorem.

Let $f\ci{\!R}:= \frac1{\mu(R)} \int_R f\,d\mu$ be the average of the 
function $f$ with respect to the measure $\mu$.

\begin{thm}[Dyadic Carleson Embedding Theorem]
\label{t:Carl}
If the numbers $a\ci Q\ge 0$, $Q\in \cD$, satisfy the following Carleson 
measure condition
\begin{equation}
\label{C-cond}
\sum_{Q\subset R} a\ci Q \le \mu(R),
\end{equation}
then for any $f\in L^2(\mu)$
\[
\sum_{R\in \cD} a\ci R |f\ci{\!R}|^2 \le 4\cdot \|f\|_{L^2(\mu)}^2.
\]
\end{thm}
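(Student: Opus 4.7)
The plan is to prove the Carleson embedding via a comparison with the (weighted) dyadic maximal function. First I would replace $f$ by $|f|$ so that we may assume $f\geq 0$, and introduce the dyadic maximal operator $M^{\mu}f(x) := \sup_{Q\in \cD,\, Q\ni x} f\ci{\!Q}$. The key starting identity is the layer-cake formula $|f\ci{\!R}|^2 = 2\int_0^\infty \lambda\, \1_{\{f\ci{\!R} > \lambda\}}\, d\lambda$, which after Fubini recasts the left-hand side as
$$\sum_{R\in\cD} a\ci R |f\ci{\!R}|^2 \;=\; 2\int_0^\infty \lambda \sum_{\substack{R\in\cD \\ f\ci{\!R} > \lambda}} a\ci R \, d\lambda.$$

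Next I would use the Carleson condition \eqref{C-cond} to dominate the inner sum. Fix $\lambda > 0$; if $f\ci{\!R} > \lambda$, then every point of $R$ lies in the dyadic level set $E_\lambda := \{M^{\mu}f > \lambda\}$, because $M^{\mu}f(x) \ge f\ci{\!R}$ for all $x\in R$. Being a union of dyadic cubes, $E_\lambda$ decomposes as a disjoint union of its maximal dyadic subcubes $\{R_j^\lambda\}_j$, and every $R$ contributing to the inner sum is contained in some $R_j^\lambda$. The hypothesis \eqref{C-cond} applied to each $R_j^\lambda$ then yields
$$\sum_{R:\, f\ci{\!R} > \lambda} a\ci R \;\leq\; \sum_j \sum_{R \subset R_j^\lambda} a\ci R \;\leq\; \sum_j \mu(R_j^\lambda) \;=\; \mu(E_\lambda).$$

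Substituting back into the layer-cake integral gives
$$\sum_{R\in\cD} a\ci R |f\ci{\!R}|^2 \;\leq\; 2\int_0^\infty \lambda\, \mu(\{M^{\mu}f > \lambda\})\, d\lambda \;=\; \|M^{\mu}f\|_{L^2(\mu)}^2,$$
and I would conclude by invoking the dyadic Doob inequality $\|M^{\mu}f\|_{L^2(\mu)} \leq 2\|f\|_{L^2(\mu)}$ (the $p'=2$ case, which itself follows from the weak-type bound $\lambda\,\mu(E_\lambda) \leq \|f\,\1\ci{E_\lambda}\|_{L^1(\mu)}$ and Marcinkiewicz interpolation, or a direct good-$\lambda$ calculation). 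This produces the constant $4$.

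No step presents serious difficulty. The only point that requires care is the decomposition of $E_\lambda$ into maximal dyadic cubes, which is immediate in the dyadic setting, and the harmless bookkeeping convention $f\ci{\!Q}:=0$ when $\mu(Q)=0$, which matches the conventions already adopted in the paper.
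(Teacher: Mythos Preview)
Your proof is correct and follows exactly the standard argument the paper alludes to (stopping-time decomposition of the level sets of the dyadic maximal function, followed by the $L^2(\mu)$ bound $\|M^\mu f\|_{L^2(\mu)}\le 2\|f\|_{L^2(\mu)}$, which produces the constant $4=2^2$). The paper itself does not spell out a proof but merely cites this approach and references \cite{Duren1970} and \cite{NTV1}.
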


This theorem is very well known, cf \cite{Duren1970}. Usual proofs are based on a stopping time argument and the dyadic maximal inequality; the constant $4$ appears as $2^2$, where $2$ is the norm of the dyadic maximal operator on $L^2(\mu)$. For an alternative proof using the Bellman function method, see \cite{NTV1}.
It was also proved in \cite{NTV-name} 
that the constant $4$ is optimal. We should mention that in 
\cite{NT}, \cite{NTV-name} this theorem was proved for $\R^1$, but the 
same proof works for general martingale setup. A proof for $\R^2$ was 
presented in \cite{NTV1}, and the same proof works for $\R^d$.

Let us now show that the paraproduct $\Pi=\Pi^\mu_{\sha}$ is bounded. Ranges 
of the projections $\Delta^\nu_R$ are mutually orthogonal, so to prove 
the boundedness of the paraproduct $\Pi^\mu_{\sha}$ it is sufficient to show 
that the numbers
\[
a\ci Q := \sum_{\substack{R\in \cD, R\subset Q\\ \ell(R) = 
2^{-r}\ell(Q)}} \|\Delta^\nu_R \sha_\mu \1\ci R \|^2_{L^2(\nu)} 
\]
satisfy the Carleson Measure Condition  \eqref{C-cond} from Theorem 
\ref{t:Carl}. Let us prove this.

Consider a cube $\wt Q$. We want to show that
$$
\sum_{Q\subset \wt Q} \sum_{\substack{R\in \cD, R\subset Q\\ \ell(R) = 
2^{-r}\ell(Q)}} \|\Delta^\nu_R \sha_\mu\1\ci Q \|_{L^2(\nu)}^2 \le 
B \mu(\wt Q).
$$
By  \eqref{upping1} we can replace $\1\ci Q$ by $\1\ci{\wt Q}$, so 
the desired estimates becomes
\[
\sum_{\substack{R\in \cD, R\subset \wt Q\\ \ell(R) \le 2^{-r}\ell(\wt 
Q)}} \|\Delta^\nu_R \sha_\mu\1\ci{\wt Q} \|_{L^2(\nu)}^2 \le 
\sum_{R\subset \wt Q} \|\Delta^\nu_R \sha_\mu\1\ci{\wt Q} \|_{L^2(\nu)}^2
\le \|\1\ci{\wt Q} \sha_\mu\1\ci{\wt Q} \|_{L^2(\nu)}^2 .
\]
By the assumption  of Theorem \ref{sh2w}, see \eqref{T12wsh},
\[
\|\1\ci{\wt Q} \sha_\mu\1\ci{\wt Q} \|_{L^2(\nu)}^2 :=\int_{\wt Q} | 
\sha_\mu\1\ci{\wt Q} |^2d\nu \le B\mu(\wt Q)
\]
and so the sequence $a\ci Q$, $Q\in \cD$ satisfies the condition 
\eqref{C-cond}. Thus the norm of the paraproduct $\Pi^\mu$ is bounded by $C B^{1/2}$ (we can pick $C=2$ here) and similarly for $\Pi^\nu$.  \hfill\qed

\subsection{Boundedness of  \texorpdfstring{$\sha$}{S}: essential part}

Let $f\in L^2(\mu)$, $g\in L^2(\nu)$, $\|f\|_\mu, \|g\|_\nu\le 1$.
We want to estimate
$|\La \sha_\mu f, g\Ra_\nu|$.

Consider first $f$ and $g$ of form 
\[
f= \sum_{Q\in\cD} \Delta^\mu_Q f, \qquad g= \sum_{R\in\cD} \Delta_R^\nu g, \qquad \|f\|_\mu\le 1, \ \|g\|_\nu\le 1. 
\]
Then by Lemma \ref{l2.2}
\begin{align}
\label{sha-f-g}
\Bigl\La \sha_\mu  f,  g 
\Bigr\Ra_\nu =
\La \Pi_{\sha_\mu}^\mu f, g \Ra_\nu + \La f, \Pi_{\sha_\nu^*}^\nu g \Ra_\nu  + 
 \sum_{\substack{Q,R\in \cD,  \\ 2^{-r} \le \ell(R)/\ell(Q) \le 
2^r }}  \La \sha_\mu \Delta_Q^\mu f, \Delta_R^\nu g\Ra_\nu 
\end{align}
We know that the paraproducts $\Pi_{\sha_{\mu}}^\mu$ and $\Pi_{\sha_{\nu}^*}^\nu$ are 
bounded, so the first two terms  can be estimated together by $4B^{1/2}$. Thus it remains to estimate the last sum. 

It is enough to estimate the operator $S$
\[
\La S f, g\Ra_\nu : = \sum_{\substack{Q,R\in \cD \\ 2^{-r}\ell(Q) \le \ell(R)\le \ell(Q) }}  \La \sha_\mu \Delta_Q^\mu f, \Delta_R^\nu g \Ra_\nu 
\]
because the sum over $2^{-r} \ell(R) \le \ell(Q) <\ell(R)$ is estimated similarly. The operator  $S$ can be split as $S=\sum_{k=0}^r S_k$, where the 
\[
\La S_k f, g\Ra_\nu : = \sum_{\substack{Q,R\in \cD \\   \ell(R)=2^{-k}\ell(Q) }}  \La \sha_\mu \Delta_Q^\mu f, \Delta_R^\nu g\Ra_\nu 
\] 

Each $S_k$ can be in turn decomposed as $S_k =\sum_{j\in\Z} S_{k,j}$, where 
\[
\La S_{k,j} f, g\Ra_\nu : = \sum_{\substack{Q,R\in \cD \\ \ell(Q)=2^{j} \\ \ell(R)=2^{j-k} }}  \La \sha_\mu \Delta_Q^\mu f, \Delta_R^\nu g\Ra_\nu 
\]
For a fixed $k$ the ranges $\ran S_{k, j}$, $j\in\Z$ are mutually orthogonal in $L^2(\nu)$, and the dual ranges $\ran S^*_{k, j}$, $j\in\Z$ are mutually orthogonal in $L^2(\mu)$. Therefore $\|S_k\|_\le \max_{j\in\Z} \|S_{k,j}\|$, so we only need to uniformly estimate individual operators $S_{k,j}$. 

So, if 
\[
f_j =\sum_{Q\in\cD: \ell(Q) =2^j} \Delta^\mu\ci Q f, \qquad g_{j-k}=\sum_{R\in\cD: \ell(R) =2^{j-k}} \Delta^\nu\ci Q g
\]
it is sufficient to estimate $\La S_{k,j} f_j, g_{j-k}\Ra_\nu = \La \sha_\mu f_j, g_{j-k}\Ra_\nu$. 

We can decompose the  operator $S_{k,j} $ into \emph{interior} and \emph{outer} parts
\begin{align*}
\La S_{k,j} f, g\Ra_\nu 
&= \sum_{\substack{Q, R\in\cD:R\subset Q \\ \ell(Q) =2^j, \ell(R)=2^{j-k}}} \La \sha_\mu \Delta^\mu\ci Q f , \Delta^\nu\ci R g\Ra_\nu +
\sum_{\substack{Q, R\in\cD:R\cap Q =\varnothing \\ \ell(Q) =2^j, \ell(R)=2^{j-k}}} \La \sha_\mu \Delta^\mu\ci Q f , \Delta^\nu\ci R g\Ra_\nu 
\\
& =: \La S_{k,j}^{\text{int}} f, g\Ra_\nu + \La S_{k,j}^{\text{out}} f, g\Ra_\nu 
\end{align*}

Let us estimate $S_{k,j}^{\text{out}}$. For cubes $Q, R\in\cD$, $R\cap Q =\varnothing$, $\ell(Q) =2^{j}$, $\ell(R) =2^{j-k}$ and the corresponding weighted Haar functions $h^\mu\ci Q$ and $h^\nu\ci R$ we can write 
\begin{equation}
\label{S_out_hQ_hR}
\La S_{k, j}^{\text{out}} h^\mu\ci Q, h^\nu\ci R \Ra_\nu  = \La \sha_\mu h^\mu\ci Q, h^\nu\ci R\Ra_\nu
= \sum_{M\in\cD} |M|^{-1}\int_{M\times M} a_M(x,y) h^\mu\ci Q (y) \overline{h^\nu\ci R }(x) d\mu(y) d\nu(x)  
\end{equation}
where the kernels $a_M$ are from \eqref{sha1}. 

If $\ell(M)\le \ell(Q) =2^j$, then the cube $M$ cannot contain both $Q$ and $R$ (because $R\cap Q =\varnothing$), so the corresponding integral in \eqref{S_out_hQ_hR} is $0$. On the other hand, if $\ell(M)> 2^r \ell(Q) $, $r=\max(m, n)$ being the complexity of the dyadic shift $\sha$, then for any $x$ the function $a\ci M(x, \fdot) $ is constant on $Q$, so the corresponding integral in  \eqref{S_out_hQ_hR} is again $0$. 

So in \eqref{S_out_hQ_hR} we only need to count $M$, $2^j < \ell(M)\le 2^{j+r}$, and therefore we can write
\begin{align*}
| \La S_{k, j}^{\text{out}} h^\mu\ci Q, h^\nu\ci R \Ra_\nu | 
& = \biggl|\sum_{s=j+1}^{j+r} \int_{\R^d\times\R^d} A_s (x, y) h\ci Q^\mu(y) \overline{h\ci R^\nu}(x) d\mu(y) d\nu(x) \biggr| 
\\
& \le \sum_{s=j+1}^{j+r} \int_{\R^d\times\R^d} | A_s (x, y)| \cdot | h\ci Q^\mu(y) |\cdot | h\ci R^\nu(x) | d\mu(y) d\nu(x), 
\end{align*}
where $A_s (x, y)  := \sum_{M\in\cD: \ell(M) =2^s} |M|^{-1} a\ci M(x, y) $. 

Adding extra non-negative terms (with $R\subset Q$) we can estimate
\begin{align*}
| \La S_{k,j}^{\text{out}} f, g\Ra_\nu | 
& \le \sum_{s=j+1}^{j+r} \sum_{\substack{Q, R\in\cD:R\cap Q =\varnothing \\ \ell(Q) 
=2^j, \ell(R)=2^{j-k}}} \int_{\R^d\times \R^d} | A_s (x, y)| \cdot | \Delta\ci Q^\mu f(y) | \cdot |\Delta\ci R^\nu g(x) | d\mu(y) d\nu(x) 
\\
& \le \sum_{s=j+1}^{j+r} \int_{\R^d\times \R^d} | A_s (x, y)| \cdot |f_j(y)| \cdot | g_{j-k}(x) | d\mu(y) d\nu(x) 
\end{align*}

But each integral operator with kernel $|A_s|$ is the direct sum of the operators with kernels $|M|^{-1} |a\ci M|$, $M\in\cD$, $\ell(M) =2^s$ (recall that $a\ci M$ is supported on  $ M\times M$). 

Since $\|a_M\|_\infty\le 1$ we can estimate the Hilbert--Scmidt norm
\[
\int_{M\times M} |M|^{-2} |a\ci M(x, y)|^2 d\mu(y) d\nu(x) \le [\mu, \nu]_{A_2}.  
\]
so the norm each operator with kernel $|M|^{-1} |a\ci M(x, y)|$ is at most $[\mu, \nu]_{A_2}^{1/2}$. 
Therefore the norm of each operator with kernel $|A_s(x, y)|$ is estimated by $[\mu, \nu]_{A_2}^{1/2}$, and summing in  $s$
we get 
\begin{equation}
\label{Sjk-out}
\| S_{k,j}^{\text{out}} \|\ci{L^2(\mu)\to L^2(\nu)} \le r [\mu, \nu]_{A_2}^{1/2}
\end{equation}

To estimate the norm of $S_{k,j}^{\text{int}}$ we need the following simple lemma 

\begin{lm}
\label{l:1Q_sha_hQ}
In the assumptions of Theorem \ref{sh2w}
\[
\| \1\ci Q \sha_\mu h\ci Q^\mu \|_\nu^2 \le 2^d (B + 4 [\mu,\nu]_{A_2}) \|h^\mu\ci Q\|_\mu^2. 
\]
for any $\mu$-Haar function $h\ci Q^\mu$. 
\end{lm}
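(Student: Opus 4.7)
The plan is to expand the weighted Haar function as $h^\mu\ci Q = \sum_{Q_i\in\ch(Q)} c_i \1\ci{Q_i}$ (so that $\|h^\mu\ci Q\|_\mu^2 = \sum_i |c_i|^2\mu(Q_i)$) and apply the elementary inequality $|\sum_{i=1}^{2^d} a_i|^2\le 2^d\sum_i |a_i|^2$ pointwise to $\sha_\mu h^\mu\ci Q = \sum_i c_i\,\sha_\mu\1\ci{Q_i}$. Integrating against $d\nu$ over $Q$ yields
\begin{equation*}
\int_Q|\sha_\mu h^\mu\ci Q|^2\,d\nu \le 2^d \sum_i |c_i|^2 \int_Q |\sha_\mu \1\ci{Q_i}|^2\,d\nu,
\end{equation*}
so the whole task reduces to establishing the single-child estimate $\int_Q|\sha_\mu\1\ci{Q_i}|^2\,d\nu \le \mu(Q_i)(B+4[\mu,\nu]_{A_2})$.

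I would handle this estimate by splitting the domain as $Q=Q_i\cup(Q\setminus Q_i)$. Over $Q_i$, the testing hypothesis \eqref{T12wsh} applied to the cube $Q_i$ directly gives $\int_{Q_i}|\sha_\mu\1\ci{Q_i}|^2\,d\nu\le B\mu(Q_i)$. Over $Q\setminus Q_i$, I use the key structural observation that for $x\in Q\setminus Q_i$ and $y\in Q_i$ — two points lying in \emph{different} children of $Q$ — any dyadic cube $M$ with $x,y\in M$ must satisfy $M\supseteq Q$. Combined with $\supp a\ci M\subset M\times M$ and $\|a\ci M\|_\infty\le 1$, this yields the uniform kernel bound
\begin{equation*}
|\sha_\mu \1\ci{Q_i}(x)| \le \mu(Q_i)\sum_{M\in\cD,\,M\supseteq Q}|M|^{-1} \le \frac{2^d}{2^d-1}\cdot\frac{\mu(Q_i)}{|Q|}\le \frac{2\mu(Q_i)}{|Q|},\qquad x\in Q\setminus Q_i.
\end{equation*}
Squaring, integrating over $Q\setminus Q_i$, and using $\mu(Q_i)\le\mu(Q)$ together with $\mu(Q)\nu(Q)\le |Q|^2[\mu,\nu]_{A_2}$ produces $\int_{Q\setminus Q_i}|\sha_\mu\1\ci{Q_i}|^2\,d\nu \le 4\mu(Q_i)[\mu,\nu]_{A_2}$. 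Adding the two contributions and summing over $i$ completes the proof.

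The only subtle point is justifying that only $M\supseteq Q$ contributes to the kernel $A(x,y)=\sum_{M\in\cD}|M|^{-1}a\ci M(x,y)$ when $x$ and $y$ lie in distinct children of $Q$; this is what forces the off-diagonal kernel to be uniformly of order $|Q|^{-1}$ and produces the clean $[\mu,\nu]_{A_2}$ factor with no dependence on the complexity $r$. The factor $2^d$ in the conclusion is then simply the number of dyadic children of $Q$ appearing in the opening power-mean step, and no use of the mean-zero condition on $h^\mu\ci Q$ is actually required.
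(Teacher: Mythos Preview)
Your proof is correct and follows essentially the same route as the paper: decompose $h^\mu\ci Q$ over the children $Q_i$, bound $\int_{Q_i}|\sha_\mu\1\ci{Q_i}|^2\,d\nu$ by the testing hypothesis, bound $\int_{Q\setminus Q_i}|\sha_\mu\1\ci{Q_i}|^2\,d\nu$ via the kernel estimate $|\sha_\mu\1\ci{Q_i}(x)|\le 2\mu(Q_i)/|Q|$ obtained by summing the geometric series over $M\supseteq Q$, and then collect the $2^d$ factor. The only cosmetic difference is that the paper applies the triangle inequality on norms followed by Cauchy--Schwarz over the $2^d$ children, whereas you apply the pointwise power-mean inequality before integrating; both yield the identical bound, and your observation that the mean-zero condition on $h^\mu\ci Q$ is not used is also present (implicitly) in the paper's argument.
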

\begin{proof}
Let $Q_k$, $k=1, 2, \ldots, 2^d$ be the dyadic children of $Q$. A $\mu$-Haar function $h^\mu\ci Q$ can be represented as 
\begin{equation}
\label{hQmu}
h^\mu\ci Q =\sum_{k=1}^{2^d} \alpha_k \1\ci{Q_k} , \qquad \sum_{k=1}^{2^d} \alpha_k \mu(Q_k) =0. 
\end{equation}
and 
\begin{equation}
\label{norm_hQmu}
\| h^\mu\ci Q \|_\mu^2 = \sum_{k=1}^{2^d} |  \alpha_k |^2 \mu(Q_k). 
\end{equation}

By assumption \eqref{T12wsh} of Theorem \ref{sh2w}
\begin{equation}
\label{1sha1}
\| \1\ci{Q_k} \sha_\mu \1\ci{Q_k} \|_\nu^2 \le B \mu(Q_k) .
\end{equation}

Let us estimate $\| \1\ci{Q\setminus Q_k} \sha_\mu \1\ci{Q_k} \|_\nu$. We know that 
\[
\sha_\mu \1\ci{Q_k}(x) = \sum_{M\in\cD} |M|^{-1} \int_{Q_k} a\ci M(x, y) \1\ci{Q_k}(y) d\mu(y). 
\]
Since the functions $a\ci M$ are supported on $M\times M$,  only the terms with $M\supset Q$ can give a non-zero contribution for $x\notin Q_k$. Therefore, summing the geometric series we get that 
\[
| \sha_\mu \1\ci{Q_k}(x)| \le 2 \mu(Q_k) |Q|^{-1}\qquad \forall x\notin Q_k. 
\] 
Then 
\[
\| \1\ci{Q\setminus Q_k} \sha_\mu \1\ci{Q_k} \|_\nu^2 \le 4 \mu(Q_k)^2 |Q|^{-2} \nu(Q) , 
\]
and combining this estimate with \eqref{1sha1} we get
\begin{align*}
\| \1\ci{Q} \sha_\mu \1\ci{Q_k} \|_\nu^2 & \le B \mu(Q_k) + 4 \mu(Q_k)^2 |Q|^{-2} \nu(Q) \\
& \le B \mu(Q_k) + 4 \mu(Q_k) \mu(Q) |Q|^{-2} \nu(Q)  \\
& \le (B + 4 [\mu, \nu]_{A_2} ) \mu(Q_k) 
\end{align*}
Therefore, we can get recalling \eqref{hQmu} and \eqref{norm_hQmu}
\begin{align*}
\| \1\ci Q \sha_\mu h\ci Q^\mu \|_\nu 
&\le \sum_{k=1}^{2^d} |\alpha_k| \| \1\ci{Q} \sha_\mu \1\ci{Q_k} \|_\nu \\
& \le  \left(B + 4 [\mu, \nu]_{A_2} \right)^{1/2} \sum_{k=1}^{2^d} |\alpha_k| \mu(Q_k)^{1/2} \\
& \le \left(B + 4 [\mu, \nu]_{A_2} \right)^{1/2} 2^{d/2} \left( \sum_{k=1}^{2^d} |\alpha_k|^2 \mu(Q_k) \right)^{1/2} \\
& = 2^{d/2} \left(B + 4 [\mu, \nu]_{A_2} \right)^{1/2} \| h^\mu\ci Q\|_\mu 
\end{align*}
\end{proof}

Using the above Lemma \ref{l:1Q_sha_hQ}, we can easily estimate $S_{k,j}^{\text{int}}$. Namely, 
\begin{align*}
\| S_{k,j}^{\text{int}} f_j \|_\nu^2 & = \sum_{Q\in\cD: \ell(Q) =2^j} 
\Biggr\| \sum_{R\subset Q: \ell(R) =2^{j-k}} \Delta^\nu\ci R\sha_\mu \Delta^\mu\ci Q f \Biggl\|_\nu^2 \\
& \le \sum_{Q\in\cD: \ell(Q) =2^j} \left\| \1_Q\sha_\mu \Delta^\mu\ci Q f \right\|_\nu^2 \\
&\le  2^d (B + 4 [\mu,\nu]_{A_2}) \sum_{Q\in\cD: \ell(Q) =2^j} \left\| \Delta^\mu\ci Q f \right\|_\mu^2\\
& = 2^d (B + 4 [\mu,\nu]_{A_2}) \|f_j\|_\mu^2.
\end{align*}
Combining this with the estimate \eqref{Sjk-out} of $\| S_{k,j}^{\text{out}} \|$, we get that
\[
\left\| S_{k,j} \right\|\ci{L^2(\mu)\to L^2(\nu)} \le 2^{d/2} (B + 4 [\mu,\nu]_{A_2})^{1/2} + r  [\mu,\nu]_{A_2}^{1/2}) . 
\]
Since the operator $S_k$ is the orthogonal sum of $S_{k,j}$, we get the same estimate for $\|S_k\|$. To get the estimate for $\|S\|$, $S =\sum_{k=0}^r S_k $, we just multiply the above estimate by $r+1$. 

Adding in \eqref{sha-f-g} all the estimates together we get that for $f$ and $g$ of form 
\[
f= \sum_{Q\in\cD} \Delta^\mu_Q f, \qquad g= \sum_{R\in\cD} \Delta_R^\nu g, \qquad \|f\|_\mu\le 1, \ \|g\|_\nu\le 1,  
\]
we have 
\begin{equation}
\label{sha_f_f-est}
|\La \sha_\mu f, g \Ra_\nu | \le 4 B^{1/2} + 2\cdot (r+1) [ 2^{d/2}(B + 4 [\mu,\nu]_{A_2})^{1/2} + r  [\mu,\nu]_{A_2}^{1/2}];  
\end{equation}
the first term here comes from the paraproducts, and the extra factor $2$ in second term is to take into account the sum over $\ell(Q)<\ell(R)$ in \eqref{sha-f-g}.

\subsection{Boundedness of  \texorpdfstring{$\sha$}{dyadic shift}: some little details}

We are almost done with the proof of Theorem \ref{sh2w}, modulo a little detail: for arbitrary measures $\mu$ functions $f\in L^2(\mu)$ do not admit martingale difference decomposition $f=\sum_{Q\in\cD} \Delta^\mu\ci Q f$.

 Each compact subset of $\R^d$ is contained in at 
most $2^d$ cubes of the same size as the size of this compact subset, so let $Q_k$, $k=1, 2, \ldots, 2^d$ 
be the dyadic cubes of some size $2^N$ containing supports of $f$ and $g$. The correct decomposition is given by \eqref{mdd-mu} which reads as 
\begin{equation}
\label{mdd-f}
f = \sum_{Q\in\cD: \ell(Q) =2^k} \E^\mu\ci Q f + \sum_{Q\in\cD: \ell(Q) \leq 2^k} \Delta^\mu\ci Q f
\end{equation}
(here $k$ is an arbitrary but fixed integer), and similarly for $g\in L^2(\nu)$. 
\begin{equation}
\label{mdd-g}
g = \sum_{Q\in\cD: \ell(Q) =2^k} \E^\nu\ci Q g + \sum_{Q\in\cD: \ell(Q) \leq 2^k} \Delta^\nu\ci Q g. 
\end{equation}
so  we need to estimate some extra terms. Of course, in the situation when we apply the theorem ($d\mu =w dx$, $d\nu =w^{-1} dx$, $w$ satisfies the $A_2$ condition) $f$ and $g$ can be represented via martingale difference decomposition, although some explanation will still be needed. 

Fortunately, there is a very simple way to estimate the extra terms. Let us say that  dyadic cubes $Q, R\in \cD$ are \emph{relatives} if they have a common ancestor, i.e.~ a cube $M\in\cD$ such that $Q, R\subset M$. The importance of the notion of relatives stems from the trivial observation that if the cubes $Q$ and $R$ are not relatives, then $\sha_\mu \1\ci Q \equiv 0$ on $R$. 

It is sufficient to prove the estimate on a dense set of compactly 
supported functions. For compactly supported functions $f$ and $g$ only finitely many terms $\E^\mu\ci Q f$ and $ \E^\nu\ci Q g$ in the decompositions \eqref{mdd-f} and \eqref{mdd-g} are non-zero. Let us slit the collection of corresponding cubes into equivalence classes of relatives, and for each equivalence class find a common ancestor (it is always possible because of finiteness). 

Denote by  $\ccA$  the set of these common ancestors. Then we can write instead of \eqref{mdd-f} and \eqref{mdd-g}
\begin{align}
\label{mdd-f1}
f & = \sum_{Q\in\ccA} \E^\mu\ci Q f +  \sum_{Q\in\ccA} \sum_{R\in \cD: R\subset Q} \Delta^\mu\ci R f =: f\ti e + f\ti d,
\\
\label{mdd-g1}
g & = \sum_{Q\in\ccA} \E^\nu\ci Q g +  \sum_{Q\in\ccA} \sum_{R\in \cD: R\subset Q} \Delta^\nu\ci R g = : g\ti e + g\ti d;
\end{align}
the indices ``e'' and ``d'' here mean \emph{expectation} and \emph{difference}. Let us decompose
\begin{align*}
\La \sha_\mu f , g\Ra_\nu & = \La \sha_\mu (f\ti e  + f\ti d ) , g\ti e + g\ti d \Ra_\nu \\
& = \La \sha_\mu f\ti e, g \Ra_\nu + \La \sha_\mu f\ti d, g\ti e  \Ra_\nu + \La \sha_\mu f\ti d, g\ti d \Ra_\nu
\end{align*}
The last term is estimated by \eqref{sha_f_f-est} (note that $\|f\|_\mu^2 = \|f\ti e\|_\mu^2 + \|f\ti d\|_\mu^2$ and similarly for $\|g\|_\nu^2$), so we just need to estimate the first two terms.  

Any two cubes $Q, Q'\in\ccA$, $Q\ne Q'$ are not relatives, so as we already mentioned $\sha_\mu \1\ci Q \equiv 0$ on any $Q'\in \ccA$, $Q'\ne Q$. Therefore
\begin{align*}
|\La \sha_\mu \E\ci Q^\mu f, g \Ra_\nu| = |\La \sha_\mu \E\ci Q^\mu f, g \1\ci Q \Ra_\nu| 
& \le \| \1\ci Q \sha_\mu \E\ci Q^\mu f \|_\nu  \| g \1\ci Q \|_\nu
\\
& \le B^{1/2} \| \E\ci Q^\mu f \|_\mu \| g \1\ci Q \|_\nu 
\end{align*}
(we use assumption \eqref{T12wsh} of theorem \ref{sh2w} for the last inequality). Summing over all $Q\in\ccA$ and applying Cauchy--Schwarz inequality we get 
\begin{align*}
|\La \sha_\mu  f\ti e , g \Ra_\nu| = \sum_{Q\in\ccA} |\La \sha_\mu \E\ci Q^\mu f, g \Ra_\nu| 
& \le B^{1/2}\sum_{Q\in\ccA} \| \E\ci Q^\mu f \|_\mu \| g \1\ci Q \|_\nu 
\\
& \le B^{1/2} \| f\ti e\|_\mu \|g\|_\nu
\le   B^{1/2} \| f\|_\mu \|g\|_\nu
\end{align*}

Similarly
\begin{align*}
|\La \sha_\mu  f\ti d , g\ti e  \Ra_\nu|=  |\La   f\ti d , \sha_\nu^* g\ti e  \Ra_\nu| \le B^{1/2} \| f\ti d\|_\mu \| g\ti e\|_\nu \le   B^{1/2} \| f \|_\mu \| g \|_\nu, 
\end{align*}
so in general case we just need to add $2 B^{1/2}$ to the right side of \eqref{sha_f_f-est}.

\section{Dyadic shifts and random lattices}
\label{DSRL}

In this section we use a probabilistic approach to decompose an arbitrary \cz operator as an average of simple blocks,
namely, the dyadic shifts investigated above. More precisely, we prove the following result, which is a variant of \cite{H}, Theorem~4.2. 
The decomposition  here is easier than in \cite{H}, and there is a reason for that: the shifts in \cite{H} needed to have an extra geometric property pertinent to being applied in conjunction with \cite{PTV1}. Here we do not need that as we are not basing our reasoning on a weighted $T1$ theorem of  \cite{PTV1}. The idea of such decomposition goes back to methods of non-homogeneous Harmonic Analysis exploited in \cite{NTV5}  or \cite{VolLip} for example.

\begin{thm}
\label{T=avg-shift}
Let $T$ be a \cz operator in $\R^d$ with parameter $\alpha$. Then $T$ can be represented as 
\[
T = C \int_\Omega \sum_{m, n\in\Z_+} 2^{- (m+n)\alpha /2}\,\, \sha^{\om}_{m,n}  \, d\bP(\om)
\]
where $\sha^{\om}_{m,n}$ is a dyadic shift with parameters $m, n$ in the lattice $\cD_\om$; the shifts with parameters $0, 1$ and $1, 0$ can be generalized shifts, and all other shifts are the regular ones. 

The constant $C$ depends only on the dimension $d$ and the parameters of the \cz operator $T$ (the norm $\|T\|\ci{L^2\to L^2}$, the smoothness $\alpha$, and the constant $C\ti{cz}$ in the \cz estimates). 
\end{thm}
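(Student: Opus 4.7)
The plan is to expand $\langle Tf, g\rangle$ in the Haar basis of the random dyadic lattice $\cD_\omega$, reorganize the resulting double sum over pairs $(I, J) \in \cD_\omega \times \cD_\omega$ by the triple $(Q, m, n)$ where $Q \in \cD_\omega$ is the smallest dyadic ancestor of both $I$ and $J$, and $m, n$ are determined by $\ell(I) = 2^{-m}\ell(Q)$ and $\ell(J) = 2^{-n}\ell(Q)$. Each block at fixed $(m, n)$ will be identified with an elementary dyadic shift $\sha_{m,n}^\omega$ of complexity $\max(m, n)$. The exponential weight $2^{-(m+n)\alpha/2}$ arises directly from \cz smoothness, and the averaging $\int d\bP(\omega)$ both reconstitutes the identity and dissolves the boundary-configuration obstructions that would otherwise require the explicit good/bad cube machinery of \cite{H}.

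Concretely, for each $\omega$ write $f = \sum_{I \in \cD_\omega} \Delta_I f$ and $g = \sum_{J \in \cD_\omega} \Delta_J g$ using $L^2$-normalized Haar functions, so that $\langle Tf, g\rangle = \sum_{I, J} \langle Th_I, h_J\rangle \langle f, h_I\rangle \langle g, h_J\rangle$. The key coefficient bound, in the separated case, comes from combining the mean-zero property of $h_I$ with $\alpha$-H\"older smoothness of the kernel in $y$ at scale $\ell(I)$:
\[
|\langle Th_I, h_J\rangle| \le C\ti{cz} \frac{\ell(I)^\alpha}{\ell(Q)^{d+\alpha}} \|h_I\|_1 \|h_J\|_1 = C \cdot 2^{-m\alpha} \frac{|I|^{1/2}|J|^{1/2}}{|Q|},
\]
and symmetrically $2^{-n\alpha}$ via smoothness in $x$; the geometric mean gives the desired factor $2^{-(m+n)\alpha/2}$. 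After pulling out $C \cdot 2^{-(m+n)\alpha/2}$ as an external weight, the block at fixed $(m, n)$ takes the form $\sum_Q |Q|^{-1} \sum_{I, J \subset Q} \langle f, h_I^J\rangle h_J^I$ with rescaled Haar-like functions satisfying $\|h_I^J\|_\infty \|h_J^I\|_\infty \le 1$ -- precisely the structure of an elementary dyadic shift with parameters $(m, n)$. For nested pairs (say $J \subsetneq I = Q$, i.e.~$m = 0$, $n \ge 1$), the standard split $h_Q = c\mathbf{1}_{Q'} + r_Q$ on the child $Q' \supset J$ produces a paraproduct-type coefficient $c \langle T \mathbf{1}_{Q'}, h_J\rangle$ alongside a remainder with H\"older cancellation; the constant piece $\mathbf{1}_{Q'}$ is the generalized Haar function appearing at parameters $(0, 1)$ and, symmetrically, $(1, 0)$, while all other parameter pairs give ordinary shifts.

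The main obstacle is the \emph{boundary configuration}: the H\"older estimate implicitly used $\dist(I, J) \gtrsim \ell(Q)$, which fails when $I$ and $J$ sit on the common wall between adjacent children of $Q$. The probabilistic averaging over $\omega$ is essential here: because the random shift distribution puts any fixed cube in a near-boundary position with probability strictly less than $1$, uniformly in $(m, n)$, one can include bad-position pairs in $\sha_{m,n}^\omega$ and compensate the deterministic loss by an absolute constant $C$ in the averaging -- a simplification over \cite{H} in that no separate ``good cube'' event is tracked. Putting the pieces together yields
\[
T = C \int_\Omega \sum_{m, n \in \Z_+} 2^{-(m+n)\alpha/2}\, \sha_{m,n}^\omega \, d\bP(\omega),
\]
with $C$ depending only on $d$, $C\ti{cz}$, $\alpha$, and $\|T\|_{L^2\to L^2}$, which is the claimed representation.
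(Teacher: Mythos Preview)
Your proposal has a genuine gap in its treatment of what you call the ``boundary configuration,'' and it is precisely the crux of the proof.

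You write the coefficient estimate
\[
|\langle Th_I, h_J\rangle| \le C\cdot 2^{-(m+n)\alpha/2}\frac{|I|^{1/2}|J|^{1/2}}{|Q|},
\]
with $Q$ the least common dyadic ancestor, and then concede that this ``implicitly used $\dist(I,J)\gtrsim\ell(Q)$.'' But when $I$ is a tiny cube sitting on (or very near) the internal boundary of $Q$, the cubes $I$ and $J$ can be adjacent while lying in different children of $Q$; the \cz kernel is then essentially singular across $I\times J$, and $|\langle Th_I,h_J\rangle|$ is far too large to meet the normalization $\|h_{Q'}^{Q''}\|_\infty\|h_{Q''}^{Q'}\|_\infty\le 1$ required of a dyadic shift. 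Such pairs cannot be ``included in $\sha_{m,n}^\omega$ and compensated by an absolute constant,'' because the individual coefficients blow up --- there is nothing to average.

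The paper does \emph{not} avoid the good/bad machinery; it uses it explicitly. Lemma~\ref{l:avg1} is the key step you are missing: by a careful conditional-expectation argument (conditioning on the position of the larger cube and integrating over the randomness that governs goodness of the smaller one), one shows that
\[
\langle Tf,g\rangle=\pi_{\mathrm{good}}^{-1}\,\bE_\Omega\!\!\sum_{\substack{\ell(I)\le\ell(J)\\ I\ \mathrm{good}}}\!\!\langle T\Delta_I f,\Delta_J g\rangle
+\pi_{\mathrm{good}}^{-1}\,\bE_\Omega\!\!\sum_{\substack{\ell(J)<\ell(I)\\ J\ \mathrm{good}}}\!\!\langle T\Delta_I f,\Delta_J g\rangle.
\]
Only \emph{after} this restriction to good $I$ (resp.~$J$) does the coefficient bound of Lemma~\ref{l:ThQ-hR} hold, and only then can one normalize into dyadic shifts. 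The simplification over \cite{H} advertised in the paper is that the resulting shifts need not themselves be ``good shifts'' --- not that good cubes are dispensed with. Your sentence ``no separate good-cube event is tracked'' is therefore exactly backwards. A second, smaller gap: the paper does not organize by the \emph{smallest} common ancestor but rather sums over \emph{all} common ancestors $M\supset Q,R$ with weights $D(Q,R)^{d+\alpha}/\ell(M)^{d+\alpha}$, and then needs a separate lemma (Lemma~\ref{l:rho_P_Q}) to show these weights are bounded below; your least-ancestor organization would need its own justification, which interacts nontrivially with the goodness constraint.
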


%
%

\subsection{Getting rid of bad cubes}
Let $\cD_\om$, $\om\in\Omega$ be the translated dyadic lattice in $\R^d$ as defined in Section \ref{s:RDL}  and let $\bP$ be the canonical probability measure on $\Omega$ (also defined in Section \ref{s:RDL}).

Fix $r_0\in \N$. Let $\gamma=\frac{\alpha}{2(d+\alpha)}$, where $\alpha$ is the \cz parameter of the operator $T$. 
\begin{df*}  A cube $Q\in \cD_{\omega}$ is called \emph{bad}  if there exists a bigger cube $R\in \cD_{\omega}$ such that $\ell(Q)< 2^{-r_0} \ell(R)$ and 
\[
\dist (Q, R) < \ell(Q)^{\gamma}\ell(R)^{1-\gamma}. 
\]
\end{df*}

Let us introduce some probabilistic notation we will use in this section. 
Let $\bE=\bE_\Omega$ denote the expectation with respect to the probability measure $\bP$, 
\[
\bE_\Omega F = \bE_\Omega F(\om) =\int_\Omega F(\om)\, d\bP(\om);
\] 
slightly abusing the notation we will often write $ \bE_\Omega F(\om)$ to emphasize that $F$ is a random variable (depends on $\om$). 

For $k\in\Z$ let  $\sA_k$ be the sigma-algebra generated by the random variables $\om_j$, $j< k$, and let $\bE_{\sA_k}$ be the corresponding conditional expectation. 
Because of the product structure of $\Omega$, the conditional expectation $\bE_{\sA_k}$ is easier to understand: it is just the integration with respect to a part of variables $\om_j$. 

Namely, for $k\in \Z$ one can split $\om = (\tensor*[^k]\om{}, \om^k)$, where $\tensor*[^k]\om{} := (\om_j)_{j<k}$, $\om^k:= (\om_j)_{j\ge k}$, so $\Omega$ is represented as a product $\Omega =  \tensor*[^k]{\Omega}{} \times \Omega^k$. Note that the sets $\tensor*[^k]{\Omega}{}$ and $\Omega^k$ are probability spaces with respect to the standard product measures. We will use the same letter $\bP$ for these measures (probabilities), hoping that this will not lead to the confusion.

Denote by $\Omega^k[ \tensor*[^k]\om{}]$ the ``slice'' of $\Omega$, 
\[
\Omega^k [ \tensor*[^k]\om{} ] = \{ (\tensor*[^k]\om{}, \om^k): \om^k\in\Omega^k \}. 
\]
Then for almost all $\tensor*[^k]\om{}$, assuming that  $\om = (\tensor*[^k]\om{}, \om^k)$ we have
\begin{align*}
(\bE_{\sA_k} F)(\om) = \bE_{ \Omega^k[ \tensor*[^k]\om{}] } F := \int_{\Omega^k} F(\tensor*[^k]\om{}, \tilde\om^k) \, d\bP(\tilde\om^k),  
\end{align*}
so the conditional expectation $\bE_{\sA_k}$ is just the integration over slices. 

Finally, given a cube $Q\in \cD_\om$, $\ell(Q)=2^k$, denote by $\Omega[Q]$ the slice  $\Omega[Q] := \Omega^k[ \tensor*[^k]\om{}] $ for the particular choice of the parameters $\tensor*[^k]\om{} = (\om_j)_{j<k}$ determining the position of $Q$ (and of all cubes of size $2^k$).  The notation $\bE_{\Omega [Q]}$ then should be clear, and one also can define the conditional probability 
\begin{align*}
\bP\{\text{event} | Q\} := \bE_{\Omega [Q]} \1\ti{event} . 
\end{align*}

\begin{lm}
\label{pr}
$\pi\ti{bad} =\pi\ti{bad} (r_0, \gamma, d) :=\bP\{Q \,\textup{is bad}| Q\}\le C(d)\, 2^{-cr_0}$.
\end{lm}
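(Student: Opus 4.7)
The plan is to decompose the badness event according to the side-length of the witness $R$ and to bound each scale's contribution by an elementary boundary-layer volume estimate after conditioning on $Q$. Write $\ell(Q)=2^k$ and parameterize the candidate bigger cubes $R\in\cD_\omega$ by the integer $m>r_0$ with $\ell(R)=2^{k+m}$; badness at scale $m$ requires $\dist(Q,R)<\delta_m:=\ell(Q)^\gamma\ell(R)^{1-\gamma}=2^k\cdot 2^{m(1-\gamma)}$. Since dyadic cubes at the same level are either equal or disjoint, we tacitly understand $R$ disjoint from $Q$, as otherwise the ancestor $R_*$ of $Q$ at level $k+m$ would trivially make every cube bad.

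The key geometric reduction, for each fixed $m$, is that the badness event at scale $m$ coincides with $\{\dist(Q,\partial R_*)<\delta_m\}$. One direction is immediate: any $\cD_\omega$-cube $R$ at level $k+m$ different from $R_*$ is disjoint from $R_*$, hence lies across some face of $\partial R_*$ from $Q$, forcing $\dist(Q,R)\ge\dist(Q,\partial R_*)$. Conversely, if $\dist(Q,\partial R_*)<\delta_m$, the $\cD_\omega$-cube $R$ sharing the nearest face with $R_*$ realizes $\dist(Q,R)<\delta_m$.

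To estimate the conditional probability of $\{\dist(Q,\partial R_*)<\delta_m\}$, I would use the product structure of $\bP$: conditioning on $Q$ fixes $\tensor*[^k]\om{}=(\om_j)_{j<k}$, whereas $R_*$ depends additionally on $(\om_k,\dots,\om_{k+m-1})$, independent uniform $\{0,1\}^d$ variables under $\bP(\cdot|Q)$. A direct identification shows these variables place $Q$ uniformly over the $2^{dm}$ lattice positions $(i_1,\dots,i_d)\in\{0,\dots,2^m-1\}^d$ of a $2^k$-subcube inside $R_*$, and in these coordinates $\dist(Q,\partial R_*)=2^k\min_l\min(i_l,2^m-1-i_l)$. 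The number of positions with this quantity less than $\delta_m=2^k\cdot 2^{m(1-\gamma)}$ is bounded by $2^{dm}-(2^m-2\lceil 2^{m(1-\gamma)}\rceil)^d\le 4d\cdot 2^{-m\gamma}\cdot 2^{dm}$, so the conditional probability at scale $m$ is at most $4d\cdot 2^{-m\gamma}$.

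Summing the geometric series in $m>r_0$ yields $\pi\ti{bad}\le\frac{4d}{1-2^{-\gamma}}\cdot 2^{-r_0\gamma}=C(d)\cdot 2^{-cr_0}$ with $c=\gamma=\alpha/(2(d+\alpha))$. The only non-routine step is the geometric reduction in the second paragraph together with the implicit convention that $R$ be disjoint from $Q$; once these are clarified, the remaining count is a straightforward boundary-layer estimate.
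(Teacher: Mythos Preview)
Your argument is correct and is precisely the standard boundary-layer computation the authors have in mind: the paper's own proof consists only of the sentence ``The proof is an easy exercise for the reader.'' Your remark about reading the condition with $R$ disjoint from $Q$ (equivalently, interpreting $\dist(Q,R)$ as $\dist(Q,\partial R)$) is the right fix for the literal definition and matches the paper's own later usage, where $\dist(Q,\partial M)$ appears explicitly in the proof of Lemma~\ref{l:rho_P_Q}.
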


In words: given a cube $Q$, the probability that it is bad is a constant depending only on $r_0$, $\gamma$ and $d$, and can be estimated as stated.

\begin{proof}
The proof is an easy exercise for the reader. 
\end{proof}

For now on let us fix a sufficiently large $r_0$ such that $\pi\ti{bad}<1$, so the probability of being good satisfies $\pi\ti{good} =1-\pi\ti{bad}>0$.

\begin{lm}
\label{l:avg1}
Let $T$ be a bounded operator in $L^2 =L^2(\R^d, dx)$. Then for all $f, g\in C^\infty_0$
\begin{align*}
\La T f, g\Ra 
=\pi^{-1}\ti{good} \int_\Omega \sum_{\substack{I, J \in \cD_\omega \\ \ell(I) \le \ell(J) \\ I \ \textup{is good} }}
\La T \Delta\ci I f , \Delta\ci J g \Ra \, d\bP(\omega) 
+
\pi^{-1}\ti{good}
\int_\Omega \sum_{\substack{I, J \in \cD_\omega \\ \ell(I) > \ell(J) \\ J \ \textup{is good} }}
\La T \Delta\ci I f , \Delta\ci J g \Ra \, d\bP(\omega)
\end{align*}
\end{lm}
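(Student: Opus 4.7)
The plan is to begin from the $\omega$-independent identity
\[
\La T f, g\Ra = \sum_{I,J \in \cD_\omega} \La T\Delta_I f, \Delta_J g\Ra,
\]
valid for every $\omega$ by the martingale expansions of $f,g\in C_0^\infty$, split the right hand side into the two halves $\ell(I)\le \ell(J)$ and $\ell(I)>\ell(J)$, take the $\bE$-expectation (which does nothing since the left hand side is constant in $\omega$), and then insert the indicator that $I$ (respectively $J$) is good at the cost of the factor $\pi\ti{good}$. By symmetry it is enough to treat the first half, i.e.\ to prove
\[
\bE \sum_{\substack{I,J \in \cD_\omega \\ \ell(I)\le \ell(J)\\ I \text{ good}}} \La T\Delta_I f, \Delta_J g\Ra = \pi\ti{good}\; \bE \sum_{\substack{I,J \in \cD_\omega \\ \ell(I)\le \ell(J)}} \La T\Delta_I f, \Delta_J g\Ra.
\]

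The key step will be to collapse the inner sum over $J$ before inserting the goodness restriction. Grouping by $\ell(I)=2^k$ and telescoping,
\[
g_{\ge k}(\omega) := \sum_{\substack{J\in\cD_\omega\\ \ell(J)\ge 2^k}} \Delta_J g = \bE\bigl[g \,\big|\, \sigma(\cD_{k-1}^\omega)\bigr],
\]
since the coarse-scale conditional expectation vanishes in $L^2$ as the scale tends to $+\infty$ for $g\in C_0^\infty$. Crucially, $g_{\ge k}$ is $\sA_{k-1}$-measurable, hence $\sA_k$-measurable. The lattice $\cD_k^\omega$ and the differences $\Delta_I f$ for $I\in\cD_k^\omega$ are also $\sA_k$-measurable, so the scale-$k$ inner expression
\[
\sum_{I\in\cD_k^\omega} \La T\Delta_I f, g_{\ge k}\Ra
\]
is $\sA_k$-measurable.

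Finally I will condition on $\sA_k$. The indicator $\1_{\{I \text{ good}\}}$ is a function of $(\omega_j)_{j\ge k}$ only (being determined by the positions of cubes at scales $\ge 2^{k+r_0+1}$), and by Lemma~\ref{pr} its conditional expectation equals $\pi\ti{good}$ for every $I\in\cD_k^\omega$. Linearity of the conditional expectation---and no independence of the goodness events across different $I\in\cD_k^\omega$ is needed---then gives
\[
\bE\Bigl[ \sum_{\substack{I\in\cD_k^\omega\\ I \text{ good}}} \La T\Delta_I f, g_{\ge k}\Ra \,\Big|\, \sA_k \Bigr] = \pi\ti{good}\, \sum_{I\in\cD_k^\omega} \La T\Delta_I f, g_{\ge k}\Ra.
\]
Taking outer expectation and summing over $k\in\Z$ yields the identity for the $\ell(I)\le \ell(J)$ half; the $\ell(I)>\ell(J)$ half follows by the mirror argument with the roles of $(I,f)$ and $(J,g)$ interchanged, using instead $f_{\ge k}:=\bE[f\,|\,\sigma(\cD_{k-1}^\omega)]$.

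The main obstacle I expect is the measurability bookkeeping: verifying that the telescoping sum defining $g_{\ge k}$ really is $\sA_{k-1}$-measurable (which comes down to the vanishing of $\bE[g\,|\,\sigma(\cD_K^\omega)]$ in $L^2$ as $K\to+\infty$ for $g\in C_0^\infty$) and that the goodness event, despite depending on infinitely many coordinates, yields the clean conditional probability $\pi\ti{good}$ from Lemma~\ref{pr}. Once these are in place, the rearrangement-plus-conditioning argument is essentially algebraic.
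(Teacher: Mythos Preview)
Your argument is correct and takes a genuinely more direct route than the paper. The paper does \emph{not} condition directly on the half $\ell(I)\le\ell(J)$; in fact it contains a remark explicitly stating that ``the above identity cannot be obtained by directly applying the above trick with the conditional expectation,'' because for an individual pair with $\ell(I)<\ell(J)$ the block $\Delta_J g$ depends on $\omega_j$ with $j$ between $\log_2\ell(I)$ and $\log_2\ell(J)$, the same coordinates that decide whether $I$ is good. Instead, the paper first proves $\bE_\Omega\langle T f_{\text{good},\omega},g\rangle=\pi_{\text{good}}\langle Tf,g\rangle$, then splits into the two halves and applies conditioning only on the ``easy'' half $\ell(I)>\ell(J)$ (where both cube positions are $\sA_k$-measurable for $2^k=\ell(I)$); the ``hard'' half $\ell(I)\le\ell(J)$ is then obtained by subtracting the easy half from the full identity.

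Your key observation---that summing $\Delta_J g$ over all $J$ with $\ell(J)\ge 2^k$ telescopes to the coarse-scale average $\sum_{J':\ell(J')=2^{k-1}}\E_{J'}g$, which is $\sA_{k-1}$-measurable---is exactly what sidesteps the obstacle the paper identifies: the individual terms are not $\sA_k$-measurable, but their sum is. This lets you condition directly and avoid the detour through $f_{\text{good},\omega}$ and the subtraction argument. Your route is shorter; the paper's route has the minor advantage of isolating the reusable identity $\bE_\Omega\langle f_{\text{good},\omega},g\rangle=\pi_{\text{good}}\langle f,g\rangle$ (which it also applies to the paraproducts later). The summability/Fubini justification you flag is handled in the paper by the same kind of smoothness bounds on $\|\Delta_I f\|_\infty$ for $f\in C_0^\infty$ that you would need.
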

\begin{proof}
It is more convenient to use probabilistic notation in the proof. 
Let 
\[
f_\gw := \sum_{\substack{I \in \cD_\omega \\ I \ \text{is good} }} \Delta\ci I f . 
\]
Then for any $f, g\in L^2$, 
\begin{align*}
\bE\ci\Omega \La  f_\gw , g\Ra\ & = \bE_\Omega \sum_{\substack{I \in \cD_\omega \\ I \ \text{is good} }} \La \Delta\ci I f, \Delta\ci I g \Ra \ 
\\
& = \sum_{k\in\Z} \bE_\Omega \bE_{\sA_k} \sum_{\substack{I \in \cD_\omega: \ell(I)=2^{k} \\ I \ \text{is good} }} \La \Delta\ci I f, \Delta\ci I g \Ra   \\
& = \sum_{k\in\Z} \bE_\Omega \bE_{\sA_k} \sum_{\substack{I \in \cD_\omega: \ell(I)=2^{k}  }} \La \Delta\ci I f, \Delta\ci I g \Ra \1\ci{
\{I \text{ is good}\}} (\om).
\end{align*}
To compute the conditional expectation let us notice that the position of the cubes $I\in\cD_\om$, $\ell(I) =  2^k$ depends only on the random  variables $\om_j$, $j<k$. On the other hand, the event that  a cube $I\in\cD_\om$, $\ell(I) =  2^k$ is good depends only on the variables $\om_j$, $j\ge k$, and for fixed variables $\om_j$, $j<k$ the corresponding conditional probability  of this event is $\pi\ti{good}$, so we can write for the conditional expectation
\begin{align}
\label{cond-exp}
\bE_{\sA_k}     \1\ci{\{I \text{ is good}\}} (\om) =\pi\ti{good}. 
\end{align}
 Therefore
\[
 \bE_{\sA_k} \sum_{\substack{I \in \cD_\omega: \ell(I)=2^{-k}  }} 
 \La \Delta\ci I f, \Delta\ci I g \Ra  \1\ci{
\{I \text{ is good}\}} (\om)
 = \pi\ti{good} \sum_{\substack{I \in \cD_\omega: \ell(I)=2^{-k}  }} 
 \La \Delta\ci I f, \Delta\ci I g \Ra , 
\]
which gives us 
\begin{align}
\label{E-fg-g}
\bE_\Omega \La  f_\gw , g\Ra\ =  \pi\ti{good} \La f, g \Ra. 
\end{align}

Applying this identity to $\La Tf_\gw, g\Ra = \La f_\gw, T^* g\Ra$ (with $T^*g$ instead of $g$) we get
\begin{align}
\notag
\pi\ti{good} \La T f, g \Ra & = \bE_\Omega \La  T f_\gw , g \Ra  \\
\notag
& = \bE_\Omega \sum_{\substack{I, J \in \cD_\omega \\ \ell(I) \le \ell(J) \\ I \ \text{is good} }}
\La T \Delta\ci I f , \Delta\ci J g \Ra  
+
\sum_{k\in\Z} \bE_\Omega \bE\ci{\sA_k} \sum_{\substack{I, J \in \cD_\omega \\ \ell(I)=2^k, \, \ell(I) > \ell(J)  }}
\La T \Delta\ci I f , \Delta\ci J g \Ra\,   \1\ci{\{I \text{ is good}\}}
\\
\label{split1}
 & =
\bE_\Omega \sum_{\substack{I, J \in \cD_\omega \\ \ell(I) \le \ell(J) \\ I \ \text{is good} }}
\La T \Delta\ci I f , \Delta\ci J g \Ra  
+
\pi\ti{good}
\bE_\Omega \sum_{\substack{I, J \in \cD_\omega \\ \ell(I) > \ell(J)  }}
\La T \Delta\ci I f , \Delta\ci J g \Ra  ;
\end{align}
here again in the last equality we used \eqref{cond-exp} and the fact that for $2^k=\ell(I) \ge \ell(J)$ the position of $I$ and $J$ depends on the variables $\om_j$, $j<k$, while the property of $I$ depends on the variables $\om_j$, $j\ge k$ and is not influenced by the position of $J$. 

\begin{rem}
\label{r:sum-int}
To justify the interchange of the summation and expectation $\bE_\Omega$ in \eqref{split1} we first observe  that for smooth $f$  
\begin{align*}
\|\Delta\ci I  f\|_\infty \le
\left\{
\begin{array}{ll} C(d) \|\nabla f\|_\infty \ell(I) \qquad & \ell(I) < 1\,, \\
\|f\|_\infty |I|^{-1}   & \ell(I)\ge 1\,. 
\end{array}
\right. 
\end{align*}
So, if we denote 
\begin{align*}
f^k_\om := \sum_{I\in \cD_\om: \ell(I) = 2^k} \Delta\ci I f, \qquad
f_\gw^k := \sum_{\substack{I \in \cD_\omega \,: \ell(I)=2^k\\ I \ \text{is good} }} \Delta\ci I f ,
\end{align*}
then, integrating the previous estimates we have for $f\in C^\infty_0$
\begin{align*}
\|f^k_\om \|_{L^2}, \ \|f_\gw^k\|_{L^2} \le 
C(f) \min\{ 2^k, 2^{-kd}\}\,,
\end{align*}
so 
\begin{align*}
\sum_{k\in\Z} \|f^k_\om \|_{L^2} \le C(f), \qquad \sum_{k\in\Z} \|f_\gw^k \|_{L^2} \le C(f) .
\end{align*}
Then for $f, g\in C^\infty_0$
\begin{align*}
\sum_{j,k\in\Z} |\La T f_\gw^k , g_\om^j\Ra |  \le \|T\| C(f) C(g), 
\end{align*}
which justifies the first interchange of summation and integration in \eqref{split1}. The same estimate holds if we replace $f_\gw^k$ by $f_\om^k$, and this justifies the second interchange. 

Note also that the sum $f_\om^k$ has at most $C(f, k)$ non-zero terms $\Delta\ci I f$ (where $C(f, k)<\infty$ does not depend on $\om$), so for   fixed $k$ and $j$ we can interchange summation over $I$, $\ell(I)=2^k$ and integration without any problems.  

\end{rem}

Let us continue with the proof of Lemma \ref{l:avg1}. 
Since for all $\omega\in\Omega$
\[
\La T f, g\Ra = \sum_{I, J\in\cD_\omega} \La T \Delta\ci I f, \Delta\ci J g\Ra , 
\]
averaging over all $\omega$ we get 
\begin{equation}
\label{split2}
\La T f, g\Ra = \bE_\Omega \sum_{\substack{I, J \in \cD_\omega \\ \ell(I) \le  \ell(J)  }}
\La T \Delta\ci I f , \Delta\ci J g \Ra 
+
\bE_\Omega \sum_{\substack{I, J \in \cD_\omega \\ \ell(I) > \ell(J)  }}
\La T \Delta\ci I f , \Delta\ci J g \Ra . 
\end{equation}
Multiplying this identity by $\pi\ti{good}$ and comparing with \eqref{split1} we get that 
\begin{equation}
\label{half1}
\pi\ti{good} \bE_\Omega \sum_{\substack{I, J \in \cD_\omega \\ \ell(I) \le  \ell(J)  }}
\La T \Delta\ci I f , \Delta\ci J g \Ra  = 
\bE_\Omega \sum_{\substack{I, J \in \cD_\omega \\ \ell(I) \le \ell(J) \\ I \ \text{is good} }}
\La T \Delta\ci I f , \Delta\ci J g \Ra . 
\end{equation}
\begin{rem*}
Note, that the above identity cannot be obtained by directly applying the above trick with the conditional expectation to the right side. If $2^s = \ell(I)<\ell(J)=2^k$, then the position of $I$ and $J$ is defined by the variables $\om_j$, $j<k$, and the property of $I$ being good depends on $\om_j$, $j\ge s$. Thus the conditional probability of $I$ being good depends on the mutual position of $I$ and $J$ and so  there is no splitting we used proving \eqref{E-fg-g}, \eqref{split1}.
\end{rem*}

We can repeat the reasoning  leading to \eqref{split2} without any changes to the splitting into $\ell(I) < \ell(J)$ and $\ell(I)\ge \ell(J)$ to get
\[
\pi\ti{good} \bE_\Omega \sum_{\substack{I, J \in \cD_\omega \\ \ell(I) <  \ell(J)  }}
\La T \Delta\ci I f , \Delta\ci J g \Ra  = 
\bE_\Omega \sum_{\substack{I, J \in \cD_\omega \\ \ell(I) < \ell(J) \\ I \ \text{is good} }}
\La T \Delta\ci I f , \Delta\ci J g \Ra  . 
\]
From the symmetry between $I$ and $J$ we can conclude that 
\begin{equation}
\label{half2}
\pi\ti{good} \bE_\Omega \sum_{\substack{I, J \in \cD_\omega \\ \ell(I) >  \ell(J)  }}
\La T \Delta\ci I f , \Delta\ci J g \Ra  = 
\bE_\Omega \sum_{\substack{I, J \in \cD_\omega \\ \ell(I) > \ell(J) \\ J \ \text{is good} }}
\La T \Delta\ci I f , \Delta\ci J g \Ra  .
\end{equation}

Substituting \eqref{split1} and \eqref{split2} into \eqref{half2} we get 
\begin{align}
\notag
\La T f, g\Ra & = \bE_\Omega \sum_{\substack{I, J \in \cD_\omega \\ \ell(I) \le  \ell(J)  }}
\La T \Delta\ci I f , \Delta\ci J g \Ra  
+
\bE_\Omega \sum_{\substack{I, J \in \cD_\omega \\ \ell(I) > \ell(J)  }}
\La T \Delta\ci I f , \Delta\ci J g \Ra  
\\
\notag
\label{good1}
& 
=\pi^{-1}\ti{good} \bE_\Omega \sum_{\substack{I, J \in \cD_\omega \\ \ell(I) \le \ell(J) \\ I \ \text{is good} }}
\La T \Delta\ci I f , \Delta\ci J g \Ra  
+
\pi^{-1}\ti{good}
\bE_\Omega \sum_{\substack{I, J \in \cD_\omega \\ \ell(I) > \ell(J) \\ J \ \text{is good} }}
\La T \Delta\ci I f , \Delta\ci J g \Ra 
\end{align}
\end{proof}

\subsection{Subtracting paraproducts}
For a \cz operator $T$ in $L^2(\R^d)$ and a dyadic latttice $\cD_\om$, define the dyadic paraproduct $ \Pi_T^\om$
\begin{align*}
\Pi\ci T^\om f := \sum_{Q\in\cD_\om} (\E\ci Q f) \Delta\ci Q T\1.
\end{align*}
Here $\Delta\ci Q T\1$ is defined by duality, 
\begin{align*}
\La \Delta\ci Q T \1, g\Ra: = \La   \1, T^* \Delta\ci Q g\Ra \qquad \forall g\in L^2;
\end{align*}
the right side here is well defined, as one can easily show that $T^* \Delta\ci Q g \in L^1$. (This is a pretty standard place in the theory of \cz operators.) 

Define operators $\wt T_\om$ 
\begin{align*}
\wt T_\om := T - \Pi_T^\om -(\Pi_{T^*}^\om)^*
\end{align*}

\begin{rem}
The matrix of the paraproduct $\Pi^\om_T$ has a very special ``triangular'' form. Namely, a block $\Delta\ci R \Pi_T^\om \Delta\ci Q$, $Q, R\in\cD_\om$ can be non-zero only if $R\subsetneqq Q$. Notice also, that if $\ell(Q)=2^k$, then the block $\Delta\ci R \Pi_T^\om \Delta\ci Q$ does not depend on the variables $\om_j$, $j\ge k$.

From the above observation  is easy to see that if $Q, R\in\cD_\om$, $\max\{\ell(Q), \ell(R)\} =2^k$, then the block $\Delta \ci R \wt T_\om \Delta\ci Q$ does not depend on variables $\om_j$, $j\ge k$, and that 
\[
\Delta \ci R \wt T_\om \Delta\ci Q = \Delta \ci R  T \Delta\ci Q
\] 
if $Q\cap R = \varnothing$ or $Q = R$. 
\end{rem}

The  paraproducts were introduced in \cz theory in the proofs of $T(1)$ and $T(b)$ theorems. The main idea is that one can estimate the operators $\wt T_\om$ by estimating the absolute values of the entries of its matrix in the Haar basis, but one cannot, in general, do the same with paraproducts (and so with a general \cz operator $T$). The papraproducts, however can be easily estimated by the Carleson Embedding Theorem, using the condition $T1\in \text{BMO}$ ($Tb\in\text{BMO}$).  

\begin{df*}
Let $D(Q, R)$ be the so-called \emph{long distance} between the cubes $Q$ and $R$, see \cite{NTV5}, 
\[
D(Q, R):= \dist (Q, R) + \ell(Q) + \ell(R). 
\]
\end{df*}

\begin{lm}
\label{l:ThQ-hR}
Let $T$ be a \cz operator (with parameter $\alpha$), and let $Q, R\in\cD_\om$, $\ell(Q) \le \ell(R)$. Let $h\ci Q$ and $h\ci R$ be Haar functions, $\|h\ci Q\|=\|h\ci R\|=1$. If $Q$ is a good cube, then  
\begin{align*}
|\La \wt T_\om h\ci Q, h\ci R \Ra |, \, |\La \wt T_\om h\ci R, h\ci Q \Ra | \le C \frac{\ell(Q)^{\alpha/2}\ell(R)^{\alpha/2}}{D(Q, R)^{d+\alpha}} |Q|^{1/2} |R|^{1/2}, 
\end{align*}
where $C=C(r_0, d, \alpha, C\ti{cz})<\infty$. 
\end{lm}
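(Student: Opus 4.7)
The plan is to split the estimate into three cases based on the position of $Q$ relative to $R$: disjoint, equal, or nested. In every case, the essential tool is the mean-zero property of $h_Q$ (the smaller Haar function) combined with the CZ smoothness of the kernel, and the geometric bound supplied by goodness: $\dist(Q, R'') \ge \ell(Q)^\gamma \ell(R'')^{1-\gamma}$ for every $R'' \in \cD_\om$ disjoint from $Q$ with $\ell(R'') > 2^{r_0}\ell(Q)$. Applied to all cubes $R''$ at the level of a given ancestor $R_0$ of $Q$, this implies $\dist(Q, \partial R_0) \ge \ell(Q)^\gamma \ell(R_0)^{1-\gamma}$. The calibration $\gamma = \alpha/(2(d+\alpha))$ is chosen precisely so that $\gamma(d+\alpha) = \alpha/2$, the arithmetic that turns a goodness-type lower bound on $|x-y|$ into the target $\ell(Q)^{\alpha/2}\ell(R)^{\alpha/2}/D(Q,R)^{d+\alpha}$.

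For the case $Q \cap R = \varnothing$, the remark preceding the lemma gives $\La \wt T_\om h_Q, h_R\Ra = \La T h_Q, h_R\Ra$, and similarly for the transposed entry. Using $\int h_Q = 0$ I would rewrite this as $\iint [K(x,y) - K(x,y_Q)]h_Q(y)h_R(x)\,dy\,dx$, where $y_Q$ is the center of $Q$. If $\ell(R) > 2^{r_0}\ell(Q)$, goodness forces $|x-y| \ge \dist(Q,R) \ge \ell(Q)^\gamma \ell(R)^{1-\gamma}$, hence $|y-y_Q| \le C\ell(Q) < |x-y|/2$ (for $r_0$ large) and CZ smoothness applies; splitting on whether $\dist(Q,R) \ge \ell(R)$ or not shows $D(Q,R) \sim \max(\dist(Q,R), \ell(R))$ and the exponents collapse thanks to $\gamma(d+\alpha) = \alpha/2$. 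If $\ell(R) \le 2^{r_0}\ell(Q)$, the scales are comparable, the RHS is bounded below by a constant depending on $r_0$, and the trivial $|\La T h_Q, h_R\Ra| \le \|T\|$ suffices. The degenerate case $Q = R$ is handled identically.

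For the case $Q \subsetneq R$, the paraproduct subtraction becomes essential. Tracking the only nonvanishing matrix entries (using that $\E_M h_Q \ne 0$ forces $M \subsetneq Q$ and $\Delta_M h_R \ne 0$ forces $M = R$, and analogously for the adjoint paraproduct) yields
\[
\La \wt T_\om h_Q, h_R\Ra = \La T h_Q, h_R\Ra - h_R(Q)\La T h_Q, \1\Ra, \qquad \La \wt T_\om h_R, h_Q\Ra = \La T h_R, h_Q\Ra - h_R(Q)\La T\1, h_Q\Ra,
\]
where $h_R(Q)$ denotes the constant value of $h_R$ on $Q$. Writing $\1 = \1_R + \1_{R^c}$, the first identity rearranges to $\La T h_Q, (h_R - h_R(Q))\1_R\Ra - h_R(Q)\La T h_Q, \1_{R^c}\Ra$, and the function $(h_R - h_R(Q))\1_R$ vanishes on the child $R'$ of $R$ containing $Q$, so is supported on $R \setminus R'$. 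For $\ell(R) > 2^{r_0+1}\ell(Q)$, goodness of $Q$ applied to $R$ and to $R'$ produces $\dist(Q, R \setminus R') \ge \ell(Q)^\gamma \ell(R)^{1-\gamma}$ and $\dist(Q, R^c) \ge \ell(Q)^\gamma \ell(R)^{1-\gamma}$; mean zero of $h_Q$ plus CZ smoothness, together with $\|h_R\|_\infty \le C|R|^{-1/2}$, delivers the target bound via $\gamma(d+\alpha) = \alpha/2$ once more. For $\ell(R) \le 2^{r_0+1}\ell(Q)$ the scales are comparable, the RHS is of order one, and the bound follows from $|\La T h_Q, h_R\Ra| \le \|T\|$ together with the standard estimate $|\La T h_Q, \1\Ra| \le C|Q|^{1/2}$, obtained by splitting at $2Q$ and combining $L^2$-boundedness inside with CZ smoothness outside. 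The main obstacle lies in this last case: verifying the paraproduct matrix calculation, handling the inner and outer pieces of the split separately, and applying goodness at two slightly different scales ($R$ and its child $R'$), all while keeping the exponents aligned through the geometric identity $\gamma(d+\alpha) = \alpha/2$.
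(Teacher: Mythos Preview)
Your proposal is correct and follows exactly the standard approach from \cite{NTV5} that the paper cites; the paper itself gives no proof beyond that citation, so there is nothing to compare against. One small imprecision: in the disjoint case with $\ell(R)\le 2^{r_0}\ell(Q)$, the right-hand side is \emph{not} bounded below by a constant when $\dist(Q,R)$ is large, so the trivial $\|T\|$ bound does not suffice there; but in that subcase $\dist(Q,R)\gtrsim\ell(Q)$ and the mean-zero/smoothness argument applies directly without any appeal to goodness, since $|y-y_Q|\le C\ell(Q)<|x-y|/2$ and $D(Q,R)\sim\dist(Q,R)$, $\ell(Q)^{\alpha}\sim\ell(Q)^{\alpha/2}\ell(R)^{\alpha/2}$.
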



The proof is pretty standard, see \cite{NTV5} for example. 

\begin{lm}
Let $C=C(r_0, d, \alpha, C\ti{cz})\ne 0$ be the constant from the above Lemma \ref{l:ThQ-hR}, and let $|a\ci{Q,R}|\le 1$. Then for any
 dyadic lattice $\cD_\om$ and for any $m, n\in\Z_+$, $m\ge n$ the operators 
\begin{align*}
C^{-1} \sum_{M\in\cD_\om} \  & \sum_{\substack{ Q, R\in \cD_\om : \, Q, R \subset M\\  \ell(Q) =2^{-m} \ell(M) \\ \ell(R) =2^{-n}\ell(M)\\ Q \textup{ is good}      }}  
a\ci{Q,R} 2^{ (m+n) \alpha/2} \,\cdot  \frac{D(Q, R)^{d+\alpha}}{\ell(M)^{d+\alpha}} \, \Delta\ci R \wt T_\om \Delta\ci Q
\end{align*}
is a dyadic shift with parameters $m$, $n$, and the same holds if we replace $ \Delta\ci R \wt T_\om \Delta\ci Q$ by $\Delta\ci Q \wt T_\om \Delta\ci R$.
\end{lm}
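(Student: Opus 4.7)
The approach is direct verification that each term of the given sum, after trivial relabelling, matches the defining form of an elementary dyadic shift with parameters $m,n$, and that the key normalization $\|h_{Q'}^{Q''}\|_\infty\|h_{Q''}^{Q'}\|_\infty\le 1$ follows from Lemma~\ref{l:ThQ-hR} by exact cancellation.

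First I fix, for every cube $Q\in\cD_\om$, an orthonormal basis $\{h\ci Q^{(i)}\}_{i=1}^{2^d-1}$ of the martingale-difference space $\Delta\ci Q L^2$. Then the block $\Delta\ci R\wt T_\om\Delta\ci Q$ admits the rank-one expansion
\[
\Delta\ci R\wt T_\om\Delta\ci Q f=\sum_{i,j=1}^{2^d-1}\La \wt T_\om h\ci Q^{(i)},h\ci R^{(j)}\Ra\La f,h\ci Q^{(i)}\Ra h\ci R^{(j)}.
\]
Splitting the operator in the lemma according to the $(2^d-1)^2$ index pairs $(i,j)$ reduces matters to showing that each of the resulting $(2^d-1)^2\le(2^d)^2$ summands is an elementary dyadic shift; the definition of dyadic shift accommodates this number of elementary pieces.

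For fixed $(i,j)$, I take $Q':=Q$, $Q'':=R$ and set
\[
h\ci{Q'}^{Q''}:=\lambda\, h\ci Q^{(i)},\qquad h\ci{Q''}^{Q'}:=\mu\, h\ci R^{(j)},
\]
where the scalars $\lambda=\lambda^{(i,j)}\ci{Q,R,M}$ and $\mu=\mu^{(i,j)}\ci{Q,R,M}$ are chosen so that
\[
|M|^{-1}\lambda\mu=C^{-1}a\ci{Q,R}\,2^{(m+n)\alpha/2}\,\frac{D(Q,R)^{d+\alpha}}{\ell(M)^{d+\alpha}}\,\La \wt T_\om h\ci Q^{(i)},h\ci R^{(j)}\Ra.
\]
With this relabelling the $(i,j)$-piece equals $\sum_M\sum_{Q,R}|M|^{-1}(f,h\ci{Q'}^{Q''})h\ci{Q''}^{Q'}$, matching the summand in the definition of an elementary dyadic shift; the geometric conditions $Q',Q''\subset M$, $\ell(Q')=2^{-m}\ell(M)$, $\ell(Q'')=2^{-n}\ell(M)$ are already built into the sum, and the goodness restriction on $Q$ only shrinks the index set, which is harmless.

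The only nontrivial point is the normalization $\|h\ci{Q'}^{Q''}\|_\infty\|h\ci{Q''}^{Q'}\|_\infty\le 1$. Since $\|h\ci Q^{(i)}\|_\infty\le|Q|^{-1/2}$ and similarly for $R$, and since only the product $\lambda\mu$ is constrained, I can distribute the magnitudes so that the normalization becomes
\[
C^{-1}|a\ci{Q,R}|\,2^{(m+n)\alpha/2}\,\frac{D(Q,R)^{d+\alpha}}{\ell(M)^{d+\alpha}}\,|M|\,\bigl|\La \wt T_\om h\ci Q^{(i)},h\ci R^{(j)}\Ra\bigr|\;\le\;|Q|^{1/2}|R|^{1/2}.
\]
Lemma~\ref{l:ThQ-hR} bounds the inner product by $C\,\ell(Q)^{\alpha/2}\ell(R)^{\alpha/2}D(Q,R)^{-(d+\alpha)}|Q|^{1/2}|R|^{1/2}$. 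Substituting this estimate, using $\ell(Q)^{\alpha/2}\ell(R)^{\alpha/2}=2^{-(m+n)\alpha/2}\ell(M)^\alpha$ and $|M|=\ell(M)^d$, every factor telescopes and one is left with $|a\ci{Q,R}|\le 1$ by hypothesis. The case of $\Delta\ci Q\wt T_\om\Delta\ci R$ is handled identically, since Lemma~\ref{l:ThQ-hR} gives the same decay bound for both matrix entries.

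The proof is purely bookkeeping; there is no real obstacle, but the conceptual point worth highlighting is that the three factors in the coefficient ($a\ci{Q,R}$, the exponential $2^{(m+n)\alpha/2}$, and the geometric power $D(Q,R)^{d+\alpha}/\ell(M)^{d+\alpha}$) together with the $C^{-1}$ are precisely engineered to cancel the $\cz$-smoothness decay of $\La\wt T_\om h\ci Q^{(i)},h\ci R^{(j)}\Ra$ and the factor $|M|$ coming from the $|M|^{-1}$ in the elementary shift, leaving exactly $|a\ci{Q,R}|\le 1$.
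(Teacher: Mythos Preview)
Your argument is essentially the paper's own proof: expand each block $\Delta\ci R\wt T_\om\Delta\ci Q$ in a Haar basis, split into $(2^d-1)^2$ elementary pieces indexed by $(i,j)$, and verify that the coefficients from Lemma~\ref{l:ThQ-hR} cancel exactly against the prefactors to yield the normalization $\|h\ci{Q'}^{Q''}\|_\infty\|h\ci{Q''}^{Q'}\|_\infty\le 1$.

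There is one small gap: you fix an \emph{arbitrary} orthonormal basis of $\Delta\ci Q L^2$ and then invoke $\|h\ci Q^{(i)}\|_\infty\le|Q|^{-1/2}$, but this bound fails for a general orthonormal basis (the sharp general bound is $2^{d/2}|Q|^{-1/2}$). You must choose the \emph{standard} tensor-product Haar basis, for which $\|h\ci Q^{(i)}\|_\infty=|Q|^{-1/2}$ exactly; the paper does this explicitly. With that specification your verification of the normalization is correct and complete.
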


\begin{proof}
We will need the notion of the \emph{standard Haar basis} here. For an interval $I\subset \R$ let $h^0\ci I := |I|^{-1/2} \1\ci I$, and let $h^1\ci I$ be the standard $L^2$-normalized Haar function, 
\begin{align*}
h^1\ci I := |I|^{-1/2} ( \1\ci{I_+} - \1\ci{I_-} ), 
\end{align*}
where $I_+$ and $I_-$ are the right and the left halves of $I$ respectively.

For a cube $Q=I_1\times I_2 \times\ldots \times I_d \in\R^d$ and an index $j$, $0\le j <2^d$,   let 
\[
h\ci Q^j(x) := \prod_{k=1}^d h\ci{I_k}^{j_k}(x_k), \qquad x=(x_1, x_2, \ldots, x_d),  
\]
where $j_k\in\{0,1\}$ are the coefficients in the binary decomposition $j=\sum_{k=1}^d j_k 2^{k-1}$ of $j$. 

The system $h^j\ci Q$, $j=1, \ldots, 2^d -1$ form an orthonormal basis in $\Delta\ci Q L^2$, which we will call the \emph{standard Haar basis}. 

Note that $h^0\ci Q = |Q|^{-1/2} \1\ci Q$.

The block $\Delta\ci R \wt T_\om \Delta\ci Q$ can be represented as 
\[
\Delta\ci R \wt T_\om \Delta\ci Q =\sum_{j,k=1}^{2^d -1} c_{j,k}(Q,R)\La \fdot , h^k\ci Q \Ra h^j\ci R
\]
where $c_{j,k}(Q, R) = \La \wt T_\om h^k\ci Q, h^j\ci R \Ra$. 

Since $\|h^j\ci Q\|_\infty = |Q|^{-1/2}$ we can estimate using Lemma \ref{l:ThQ-hR}
\begin{align}
\label{bd1}
|c_{j,k}(Q, R)| \cdot \|h^k\ci Q\|_\infty \cdot \| h^j\ci R\|_\infty \le 
\ C \frac{\ell(Q)^{\alpha/2}\ell(R)^{\alpha/2}}{D(Q, R)^{d+\alpha}} ,
\end{align}
where $C = C(r_0, d, \alpha, C\ti{cz})$ is the constant from Lemma \ref{l:ThQ-hR}. 

Clearly for fixed $j, k$ and the constant $C$ from Lemma \ref{l:ThQ-hR} we can write 
\begin{align*}
 C^{-1}\sum_{M\in\cD_\om} \  & \sum_{\substack{ Q, R\in \cD_\om :  \, Q, R \subset M\\  \ell(Q) =2^{-m} \ell(M) \\ \ell(R) =2^{-n}\ell(M)\\ Q \textup{ is good}      }}  
a\ci{Q, R}  2^{ (m+n) \alpha/2} \,\cdot  \frac{D(Q, R)^{d+\alpha}}{\ell(M)^{d+\alpha}} 
c_{j,k}(Q,R)\La \fdot , h^k\ci Q \Ra h^j\ci R 
\\
& = 
\sum_{M\in\cD_\om}  \  \sum_{\substack{ Q, R\in \cD_\om : \, Q, R \subset M \\  \ell(Q) =2^{-m} \ell(M) \\ \ell(R) =2^{-n}\ell(M)\\ Q \textup{ is good}      }}  
\La \fdot , h\ci Q \Ra h\ci R
\end{align*}
where $h\ci Q$ and $h\ci R$ are multiples  of $h^k\ci Q$ and $h^j\ci R$. This sum has the structure of an elementary dyadic shift, and to prove the lemma we only need to estimate $\|h\ci Q\|_\infty \|h\ci R\|_\infty$.

Using \eqref{bd1} we get for  fixed cubes $Q$ and $R$
\begin{align*}
\|h\ci Q\|_\infty \|h\ci R\|_\infty 
& \le  \frac{\ell(Q)^{\alpha/2}\ell(R)^{\alpha/2}}{D(Q, R)^{d+\alpha}} 
2^{ (m+n) \alpha/2} \,\cdot  \frac{D(Q, R)^{d+\alpha}}{\ell(M)^{d+\alpha}} \\
& =  \frac{1}{\ell(M)^d } \cdot\frac{\ell(Q)^{\alpha/2}\ell(R)^{\alpha/2}}{\ell(M)^{\alpha}} 
2^{ (m+n) \alpha/2}
= \frac{1}{\ell(M)^d }, 
\end{align*}
because $\ell(Q)/\ell(M) = 2^{-m}$, $\ell(R)/\ell(M) =2^{-n}$. 


So, the above sum is indeed an elementary dyadic shift with parameters $m$, $n$. Summing over all $j, k$ we get the conclusion   of the lemma
\end{proof}

\subsection{Proof of Theorem \ref{T=avg-shift}}
As we explained before, see Lemma \ref{l:avg1}, we can represent $T$ as the average
\[
T = \pi\ti{good}^{-1} \bE_\Omega \sum_{\substack{ Q, R \in \cD_\om \\ \ell(Q) \le \ell(R) \\ Q \text{ is good}}} \Delta\ci R T \Delta\ci Q + 
\pi\ti{good}^{-1} \bE_\Omega \sum_{\substack{ Q, R \in \cD_\om \\ \ell(R) < \ell(Q) \\ R \text{ is good}}} \Delta\ci R T \Delta\ci Q; 
\] 
here and below in this section the averages $\bE_\Omega$ are understood in the weak sense, as equalities of the bilinear forms for $f, g\in C^\infty_0$.  As it was explained before in the proof of   Lemma \ref{l:avg1},   see Remark \ref{r:sum-int} there, in this case we can freely interchange the summation and expectation (integration) $\bE_\Omega$.  

Recalling the decomposition 
\[
T = \wt T_\om + \Pi^\om_T +(\Pi_{T^*}^\om)^*, 
\]
and using the fact that 
for $Q, R\in\cD_\om$
\[
\Delta\ci R \Pi^\om_T \Delta\ci Q = 0, \qquad \Delta\ci Q (\Pi^\om_{T^*})^* \Delta\ci R = 0
\] 
if $\ell(Q)\le \ell(R)$, 
we can write
\begin{align}
\label{decomp-para2}
T =  \pi\ti{good}^{-1} & \bE_\Omega   \sum_{\substack{ Q, R \in \cD_\om \\ \ell(Q) \le \ell(R) \\ Q \text{ is good}}} \Delta\ci R \wt T_\om \Delta\ci Q + 
\pi\ti{good}^{-1} \bE_\Omega \sum_{\substack{ Q, R \in \cD_\om \\ \ell(R) < \ell(Q) \\ R \text{ is good}}} \Delta\ci R \wt T_\om \Delta\ci Q \\
\notag
& +\pi\ti{good}^{-1} \bE_\Omega \sum_{\substack{ Q, R \in \cD_\om \\ \ell(Q) \le \ell(R) \\ Q \text{ is good}}} \Delta\ci R (\Pi^\om_{T^*})^* \Delta\ci Q + 
\pi\ti{good}^{-1} \bE_\Omega \sum_{\substack{ Q, R \in \cD_\om \\ \ell(R) < \ell(Q) \\ R \text{ is good}}} \Delta\ci R \Pi_T^\om \Delta\ci Q. 
\end{align}

\begin{lm}
\label{l:av-para}
For the paraproducts $\Pi^\om_T$ 
\begin{align*}
\bE_\Omega \sum_{\substack{ Q, R \in \cD_\om \\ \ell(R) < \ell(Q) \\ R \text{ is good}}} \Delta\ci R \Pi_T^\om \Delta\ci Q =
\bE_\Omega \sum_{\substack{ Q, R \in \cD_\om \\ \ell(R) \le \ell(Q) \\ R \text{ is good}}} \Delta\ci R \Pi_T^\om \Delta\ci Q 
= \pi\ti{good} \bE_\Om \Pi_T^\om
\end{align*}
\end{lm}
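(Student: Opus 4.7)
The approach is to exploit the triangular matrix structure of the paraproduct $\Pi_T^\om$ — namely that its Haar blocks are supported on pairs $R \subsetneqq Q$ — so that the sum over $Q$ collapses, and then to apply the conditional expectation identity \eqref{cond-exp} scale by scale.

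For the first equality, I would expand the definition and use orthogonality $\Delta_R \Delta_{Q'} = \delta_{R, Q'} \Delta_R$ to obtain
\[
\Delta_R \Pi_T^\om \Delta_Q g = (\E_R \Delta_Q g)\, \Delta_R T\1.
\]
The functional $\E_R \Delta_Q$ vanishes unless $R \subsetneqq Q$ (if $R \cap Q = \varnothing$ by disjoint support; if $R = Q$ or $Q \subset R$ by the mean-zero property of $\Delta_Q g$). Since $\ell(R) = \ell(Q)$ with $R, Q \in \cD_\om$ forces $R = Q$ or $R \cap Q = \varnothing$, the extra diagonal terms all vanish, and the first equality follows.

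For the second equality, I would collapse the sum over $Q$: for each fixed $R \in \cD_\om$,
\[
\sum_{Q\in\cD_\om:\, \ell(Q)>\ell(R)} \Delta_R \Pi_T^\om \Delta_Q \;=\; \sum_{Q\in\cD_\om} \Delta_R \Pi_T^\om \Delta_Q \;=\; \Delta_R \Pi_T^\om,
\]
the first equality because the omitted blocks vanish by the case analysis above, and the second because $\sum_Q \Delta_Q = I$ in the weak sense. Thus the LHS becomes $\bE_\Om \sum_{R\in\cD_\om,\, R \text{ good}} \Delta_R \Pi_T^\om$. I would then stratify by scale $\ell(R) = 2^s$ and condition on $\sA_s$. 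The crucial measurability point is that the operator $\Delta_R \Pi_T^\om g = (\E_R g)\, \Delta_R T\1$ is determined entirely by the cube $R$ as a geometric object; consequently, for $R \in \cD_\om$ with $\ell(R) = 2^s$ (whose position is fixed by $\om_j, j<s$), the entire sum $\sum_{R\in\cD_\om, \ell(R)=2^s} \Delta_R \Pi_T^\om$ is $\sA_s$-measurable. Since by \eqref{cond-exp} we have $\bE_{\sA_s}\1_{\{R \text{ good}\}} = \pi\ti{good}$, I can pull the $\sA_s$-measurable operator out of the conditional expectation:
\[
\bE_\Om \sum_{\substack{R\in\cD_\om,\, \ell(R)=2^s \\ R\text{ good}}} \Delta_R \Pi_T^\om \;=\; \pi\ti{good}\,\bE_\Om \sum_{R\in\cD_\om,\, \ell(R)=2^s} \Delta_R \Pi_T^\om.
\]
Summing over $s \in \Z$ and using $\Pi_T^\om = \sum_{R\in\cD_\om} \Delta_R \Pi_T^\om$ gives $\pi\ti{good}\,\bE_\Om \Pi_T^\om$.

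The main technical obstacle is justifying the interchange of the various sums with the expectations and conditional expectations. This is handled exactly as in Remark \ref{r:sum-int}: restricting to test functions $f, g \in C^\infty_0$, the decay of $\|\E_R f\|$ (for large $\ell(R)$, by compact support) and of $\|\Delta_R T\1\|$ in an appropriate BMO/Carleson sense (for small $\ell(R)$) makes the double series absolutely convergent in the weak sense, so Fubini applies throughout.
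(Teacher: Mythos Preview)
Your proof is correct and follows essentially the same route as the paper: collapse the blocks $\Delta_R\Pi_T^\om\Delta_Q$ to $(\E_R\,\cdot\,)\Delta_R T\1$ (which the paper states as ``not hard to see from the definition''), observe this forces $R\subsetneqq Q$ so the diagonal terms drop and the $Q$-sum telescopes, then stratify by $\ell(R)=2^s$ and apply the conditional-expectation identity \eqref{cond-exp} using that $(\E_R f)\Delta_R T\1$ is $\sA_s$-measurable. Your write-up is somewhat more explicit about the measurability check and the convergence justification, but the argument is the same.
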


\begin{proof}
It is not hard to see from the definition of the paraproduct that for $f\in L^2$
\begin{align*}
\sum_{\substack{ Q, R \in \cD_\om \\ \ell(R) < \ell(Q) \\ R \text{ is good}}} \Delta\ci R \Pi_T^\om \Delta\ci Q f =
\sum_{\substack{ Q, R \in \cD_\om \\ \ell(R) \le \ell(Q) \\ R \text{ is good}}} \Delta\ci R \Pi_T^\om \Delta\ci Q f
=
\sum_{\substack{  R \in \cD_\om \\  R \text{ is good}}} (\Delta\ci R T\1) \E\ci R f.
\end{align*}
Applying $\bE_\Omega$ we get that 
\begin{align*}
\bE_\Omega \sum_{\substack{  R \in \cD_\om \\  R \text{ is good}}} (\Delta\ci R T\1) \E\ci R f
&  =
\sum_{k\in\Z} \bE_\Omega \bE_{\sA_k} \sum_{\substack{R\in\cD_\om \\ \ell(R) = 2^k }} (\Delta\ci R T\1) (\E\ci R f )\1\ci{R \text{ is good}} (\om)
\\
& = \pi\ti{good}\sum_{k\in\Z} \bE_\Omega  \sum_{\substack{R\in\cD_\om \\ \ell(R) = 2^k }} (\Delta\ci RT\1) \E\ci R f
= 
\pi\ti{good} \bE_\Omega \Pi^\om_T f; 
\end{align*}
here we again used the fact that by \eqref{cond-exp} $\bE_{\sA_k} \1\ci{R \text{ is good}} (\om)= \pi\ti{good}$ for $R\in\cD_\om$, $\ell(R)=2^k$. 
\end{proof}


By Lemma \ref{l:av-para} the second line in \eqref{decomp-para2} is $\bE_\Omega (\Pi_T^\om + (\Pi_{T^*}^\om)^*)$. We know that the paraproducts $\Pi_T^\om$ and $(\Pi_{T^*}^\om)^*$ are (up to a  constant factor $C=C(\alpha, d, C\ti{cz}, \|T\|)$) generalized dyadic shifts with parameters $0, 1$ and $1, 0$ respectively. 

So to prove the theorem we need to represent the first line in \eqref{decomp-para2} as the average of dyadic shifts. 
Let us represent the first term. 
For $m, n\in\Z_+$, $m\ge n$, define the dyadic shifts $\sha^\om_{m,n}$ as 
\begin{align*}
\sha^\om_{m,n} = \sum_{M\in\cD_\om} \sum_{\substack{Q, R\in \cD_\om :\, Q, R \subset M\\ \ell(Q) =2^{-m} \ell(M), \,\ell(R) = 2^{-n} \ell(M)\\ Q \text{ is good} }} 
\pi(Q|R)\cdot \rho^{-1}\ci{Q,R} \cdot 2^{ (m+n) \alpha/2} \,\cdot  \frac{D(Q, R)^{d+\alpha}}{\ell(M)^{d+\alpha}} \, \Delta\ci R \wt T_\om \Delta\ci Q,  
\end{align*}
where 
\[
\pi(Q|R) =\bP\{Q \text{ is good}|R\} =\bE_{\Omega[R]} \1\ci{Q \text{ is good}}
\] 
(note that $\ell(Q)\le \ell(R)$). The weights $\rho\ci{Q,R}$, $Q, R\in\cD_\om$, are defined by
\begin{align}
\label{eq:rho_QR}
\rho\ci{Q,R} := \bE_{\Omega[ R ] } \sum_{\substack{ M\in \cD_\om:\, Q, R \subset M  }}     \frac{D(Q, R)^{d+\alpha}}{\ell(M)^{d+\alpha}} \cdot
\1\ci{  Q \text{ is good} }(\om) ; 
\end{align}
note that in the above expression we assume (can assume) that the variables $\om_j$, $j<k$, determining the position of $R$ (and so of $Q$) are fixed. 

\begin{rem}
In general, $\rho\ci{Q,R}$ can be zero. However, it is not hard to see that $\rho\ci{Q, R}>0$ if $\pi(Q|R)>0$, so the dyadic shifts $\sha^\om_{m,n}$ are well defined. 
\end{rem}

Averaging we get 
\begin{align*}
\bE_\Omega & \sum_{\substack{m,n\in \Z :\, m\ge n}}  2^{-(m+n)\alpha/2} \sha^\om_{m, n}  
\\
 &= 
\bE_\Omega  \sum_{ \substack{Q, R\in\cD_\om \\ \ell(Q) \le \ell(R) \\ \pi(Q|R)\ne 0 }} 
\sum_{ \substack{M\in\cD_\om \\ Q, R\subset M }} \pi(Q|R)\cdot \rho^{-1}\ci{Q, R}  \frac{D(Q, R)^{d+\alpha}}{\ell(M)^{d+\alpha}}    \cdot
\1\ci{  Q \text{ is good} }(\om)   \Delta\ci R \wt T_\om \Delta\ci Q
\\    & = \bE_\Omega  \sum_{ \substack{Q, R\in\cD_\om \\ \ell(Q) \le \ell(R) \\ \pi(Q|R)\ne 0 }} \bE_{\Omega[ R]}
\pi(Q|R)\cdot \rho^{-1}\ci{Q, R}\cdot \Delta\ci R \wt T_\om \Delta\ci Q 
\sum_{ \substack{M\in\cD_\om \\ Q, R\subset M }}  
 \frac{D(Q, R)^{d+\alpha}}{\ell(M)^{d+\alpha}} \cdot
\1\ci{  Q \text{ is good} }(\om)  
\end{align*}
and recalling the definition of $\rho\ci{Q,R}$ we conclude 
\begin{align*}
\bE_\Omega & \sum_{\substack{m,n\in \Z :\, m\ge n}}  2^{-(m+n)\alpha/2} \sha^\om_{m, n} 
=
\bE_\Omega  \sum_{ \substack{Q, R\in\cD_\om \\ \ell(Q) \le \ell(R) }}  
\pi(Q|R) 
  \Delta\ci R \wt T_\om \Delta\ci Q.
\end{align*}

On the other hand
\begin{align*}
\bE_\Omega  \sum_{ \substack{Q, R\in\cD_\om \\ \ell(Q) \le \ell(R)\\ Q\text{ is good} }}  
 \Delta\ci R \wt T_\om \Delta\ci Q 
 &=
 \sum_{k\in\Z} \bE_\Omega \bE_{\sA_k}
 \sum_{ \substack{Q, R\in\cD_\om \\ \ell(Q) \le \ell(R)=2^k } }  \1\ci{Q\text{ is good} }(\om) \cdot \Delta\ci R \wt T_\om \Delta\ci Q  \\
 & = \sum_{k\in\Z} \bE_\Omega     
 \sum_{ \substack{Q, R\in\cD_\om \\ \ell(Q) \le \ell(R)=2^k } } (\bE_{\Omega[R]} \1\ci{Q\text{ is good} })  \Delta\ci R \wt T_\om \Delta\ci Q \\
 & = \bE_\Omega \sum_{ \substack{Q, R\in\cD_\om \\ \ell(Q) \le \ell(R) } } \pi(Q|R) \Delta\ci R \wt T_\om \Delta\ci Q ,
\end{align*}
so
\begin{align*}
\bE_\Omega & \sum_{\substack{m,n\in \Z :\, m\ge n}}  2^{-(m+n)\alpha/2} \sha^\om_{m, n} 
=
\bE_\Omega  \sum_{ \substack{Q, R\in\cD_\om \\ \ell(Q) \le \ell(R)\\ Q\text{ is good} }}  
 \Delta\ci R \wt T_\om \Delta\ci Q .
\end{align*}

It now remains to show that $\sha^\om_{m, n}$ are (up to a constant factor) are the dyadic shifts. The operators $\sha^\om_{m, n}$ have the appropriate structure, so we only need to prove the estimates, i.e. to prove that the weights $\rho\ci{Q, R}$ are uniformly bounded away from $0$. The necessary estimate follows from Lemma \ref{l:rho_P_Q} below. 

So, we have decomposed the first term in \eqref{decomp-para2} as the average of dyadic shifts. The decomposition of the second term is carried out similarly, so Theorem \ref{T=avg-shift} is proved (modulo Lemma \ref{l:rho_P_Q}). \hfill \qed

\begin{lm}
\label{l:rho_P_Q}
Let $Q, R\in\cD_\om$, $\ell(Q) \le \ell(R)$. Then 
\begin{enumerate}
    \item $\pi(Q|R) > 0$ if and only if $Q$ is ``good up to the level of $R$'', meaning that 
    \begin{equation}
    \label{eq:good-to_level_R}
    \dist (Q, Q') \ge \ell(Q)^\gamma \ell(Q')^{1-\gamma} \qquad \forall Q'\in \cD_\om:\ 2^{r_0} \ell(Q) < \ell(Q') \le \ell(R);
    \end{equation}
    note that the cubes $Q'$ do not depend on the variables $\om_j$, $j\ge k$ where $2^k=\ell(R)$. 
    
    \item There exists a constant $c=c(d, r_0, \gamma)$ such that 
    \[
    \rho\ci{Q,R} \ge c(d, r_0)  \qquad \forall Q, R\in \cD_\om :  \ \pi(Q|R)\ne 0 .  
    \] 
\end{enumerate}
\end{lm}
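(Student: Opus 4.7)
The plan for part (i) is to split the event $\{Q \textup{ is good}\}$ as $A\ti{low} \cap A\ti{high}$, where $A\ti{low}$ collects the separation requirements $\dist(Q, Q') \ge \ell(Q)^\gamma \ell(Q')^{1-\gamma}$ against every $Q' \in \cD_\om$ with $2^{r_0}\ell(Q) < \ell(Q') \le \ell(R)$, and $A\ti{high}$ against those with $\ell(Q') > \max\{\ell(R), 2^{r_0}\ell(Q)\}$. Since $\ell(Q) \le \ell(R) = 2^k$, the positions of $Q$, $R$, and of every $Q' \in \cD_\om$ of size at most $2^k$ are $\sA_k$-measurable; conditioning on $R$ freezes $\tensor*[^k]\om{}$, so $\1\ci{A\ti{low}}$ is constant on the slice $\Omega[R]$ and equals $1$ exactly when \eqref{eq:good-to_level_R} holds. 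For $A\ti{high}$, I would adapt the argument of Lemma~\ref{pr}: at each level $l$ with $2^l > \max\{\ell(R), 2^{r_0}\ell(Q)\}$ the conditional probability that some $Q' \in \cD_\om$ of size $2^l$ meets the forbidden $\ell(Q)^\gamma 2^{l(1-\gamma)}$-neighbourhood of $Q$ is bounded by $C(d)(\ell(Q)/2^l)^\gamma$, and summing the geometric series yields $\bP(A\ti{high}^c | R) \le C(d, \gamma)\, 2^{-r_0 \gamma}$, which is less than $1/2$ because $r_0$ is already fixed large. Hence $\bP(A\ti{high} | R) \ge \pi^* > 0$ with $\pi^* = \pi^*(d, r_0, \gamma)$ uniform in $Q, R$, and the identity $\pi(Q|R) = \1\ci{A\ti{low}} \cdot \bP(A\ti{high}|R)$ proves (i).

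For part (ii), the key structural observation is that the set $\{M \in \cD_\om : M \supset Q \cup R\}$ forms a doubling tower $M_0(\om) \subsetneq M_1(\om) \subsetneq \cdots$ with $\ell(M_j(\om)) = 2^j \ell(M_0(\om))$, where $M_0(\om)$ is the smallest common ancestor. The geometric sum in \eqref{eq:rho_QR} therefore evaluates to
\[
\sum_{\substack{M \in \cD_\om \\ Q, R \subset M}} \frac{D(Q,R)^{d+\alpha}}{\ell(M)^{d+\alpha}}
\;=\; C_{d,\alpha} \left(\frac{D(Q,R)}{\ell(M_0(\om))}\right)^{d+\alpha}, \qquad C_{d,\alpha} := \frac{1}{1 - 2^{-(d+\alpha)}}.
\]
Under the assumption $\pi(Q|R) > 0$, part~(i) forces $\1\ci{A\ti{low}} \equiv 1$ on $\Omega[R]$, so $\1\ci{Q\textup{ good}} = \1\ci{A\ti{high}}$ there. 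Introducing the event $E_0 := \{\ell(M_0(\om)) \le C_0 D(Q,R)\}$ for a constant $C_0 = C_0(d, r_0, \gamma)$ to be chosen, the bound
\[
\rho\ci{Q,R} \;\ge\; C_{d,\alpha}\, C_0^{-(d+\alpha)}\, \bP(A\ti{high} \cap E_0 \mid R) \;\ge\; C_{d,\alpha}\, C_0^{-(d+\alpha)}\bigl( \bP(A\ti{high}|R) - \bP(E_0^c|R) \bigr)
\]
reduces the problem to controlling $\bP(E_0^c | R)$ from above.

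For the latter, let $l_0$ be the least integer with $2^{l_0} \ge C_0 D(Q,R)$. Since the ancestor chain of $R$ in $\cD_\om$ is nested and monotone, $E_0^c$ is contained in the event that the unique cube of $\cD_\om$ of size $2^{l_0}$ containing $R$ fails to contain $Q$. In each coordinate, the position of this cube is determined by the random shifts $\om_k, \ldots, \om_{l_0-1}$, and a standard calculation shows that a grid line at spacing $2^{l_0}$ separates the projections of $Q$ and $R$ with probability $\le D(Q,R)/2^{l_0}$; a union bound over the $d$ coordinates combined with the choice of $l_0$ yields $\bP(E_0^c | R) \le C(d)/C_0$. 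Picking $C_0 := 2 C(d)/\pi^*$ ensures $\bP(E_0^c|R) \le \pi^*/2$, whence $\rho\ci{Q,R} \ge C_{d,\alpha}\, C_0^{-(d+\alpha)}\, \pi^*/2 =: c(d, r_0, \gamma) > 0$.

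The hardest part will not be any individual estimate but the bookkeeping: correctly identifying which events are measurable with respect to $\tensor*[^k]\om{}$ (and hence frozen by conditioning on $R$) versus those depending on the ``free'' variables $(\om_j)_{j \ge k}$. This matters both for the clean splitting $A\ti{low} \cap A\ti{high}$ in (i) and for the coordinate-wise decoupling used to estimate $\bP(E_0^c | R)$ in (ii), where one must be careful that the product structure of $\bP$ on $\Omega$ is preserved by the slicing $\Omega = \tensor*[^k]{\Omega}{} \times \Omega^k$.
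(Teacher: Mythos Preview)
Your proposal is correct and follows a route that is close to, but somewhat more streamlined than, the paper's. For part (i), the paper does not explicitly factor $\{Q\text{ good}\} = A\ti{low}\cap A\ti{high}$; instead it argues indirectly by showing that whenever $Q$ lies near the centre of a sufficiently large ancestor $\wt R$ of $R$, goodness of $\wt R$ with respect to the still higher levels forces goodness of $Q$, and then iterates a $2^{-d}$ probability estimate for the centring event to reach such a $\wt R$. Your direct factorisation $\pi(Q|R)=\1\ci{A\ti{low}}\cdot\bP(A\ti{high}|R)$ is more transparent and delivers the ``if and only if'' at once, whereas the paper only writes out the nontrivial implication. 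One small correction: $r_0$ in the paper is fixed only large enough that $\pi\ti{bad}<1$, not $<1/2$, so your claim $\bP(A\ti{high}^c|R)<1/2$ should be weakened to $<1$; this still yields $\pi^*>0$, which is all you use.

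For part (ii), the paper does not sum the geometric series over all common ancestors. It instead selects a single level $\tau_0 \approx \log_2(d\,D(Q,R)/\pi_0)$, shows that with conditional probability at least $\pi_0/2$ there is a common ancestor $M$ of that size while $Q$ is good, and bounds $\rho\ci{Q,R}$ from below by the contribution of that one term alone. Your closed-form evaluation of the sum in terms of $\ell(M_0(\om))$, followed by control of the event $E_0$, is an equivalent repackaging: your estimate $\bP(E_0^c|R)\le C(d)/C_0$ is precisely the paper's \eqref{eq:prob_Q_Q_subset_M}, and the resulting lower bound differs only in bookkeeping constants. Either argument buys the same conclusion; yours organises the geometry a little more efficiently by treating the full tower at once rather than isolating one level.
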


\begin{proof}
We want to estimate conditional probability end expectation with $R$ and $Q$ fixed. That means the lattice up to the level of $R$ is fixed, so nothing changes if we replace $R$ by a cube in the same level. So, without loss of generality we can assume that $Q\subset R$. 

Let us first consider a special case. Let $\ell(R) = \ell(Q) 2^{s}$, where 
\begin{align}
    \label{eq:k_ge}
    s\ge 2/\gamma + r_0 \cdot (1-\gamma)/\gamma,  
\end{align}
and let 
\[
\dist(Q, \partial R) \ge \frac14 \ell(R). 
\]
Then the estimate \eqref{eq:k_ge} implies that 
\begin{align*}
\ell(Q)^\gamma \bigl[ 2^{r_0} \ell(R) \bigr]^{1-\gamma}
=2^{-s\gamma}2^{r_0(1-\gamma)}\ell(R)\le \frac14 \ell(R), 
\end{align*}
meaning that for any cube $M\in\cD_\om$, $\ell(R) \le \ell(M) \le 2^{r_0} \ell(R) $ (assuming that the lattice $\cD_\om$ is fixed up to the level of $R$)
\begin{align}
\notag
\ell(Q)^\gamma \ell(M)^{1-\gamma}  \le \frac14 \ell(R)  & \le \dist(Q, \partial R)
\\
\label{eq:Q-M_good-1}
&\le \dist (Q, \partial M). 
\end{align}
On the other hand, if  $\ell(M)> 2^{r_0}\ell (R)$ and the pair $R$, $M$ is good, meaning that 
\[
\dist (R, \partial M) \ge \ell(R)^\gamma \ell(M)^{1-\gamma} 
\]
then 
\begin{align}
\label{eq:Q-M_good}
\dist(Q,\partial M) \ge \ell(Q)^\gamma \ell(M)^{1-\gamma},  
\end{align}
so the pair $Q$, $M$ is also good. 

Therefore, if the cube $R$ is good, then $Q$ is good as well: as we just discussed, the inequality \eqref{eq:Q-M_good} holds if $\ell(M) > 2^{r_0}\ell (R)$, and it holds for $\ell(R)\le \ell(M) \le 2^{r_0} \ell(R)$ by \eqref{eq:Q-M_good-1}. And the assumption   \eqref{eq:good-to_level_R} covers the remaining cases. 

So,  in our special case $\pi(Q|R) \ge \pi\ti{good}$. 

The general case can be easily reduced to this special situation. Namely, if $Q\subsetneq R$, then with probability at least $2^{-d}$ the parent $\wt R$ of $R$ satisfies 
\[
\dist(Q, \partial \wt R) \ge \frac14 \ell(\wt R);
\]
one can easily see that for $d=1$, and considering the coordinates independently, one gets the conclusion. 

Applying this procedure $s_0-1$ times, where $s_0$ is the smallest integer satisfying \eqref{eq:k_ge}, we arrive (with probability at least $2^{-(s_0-1)d}$) to the special  situation we just discussed. Therefore for $Q\subsetneq R$ (equivalently $\ell(Q) <\ell(R)$) statement \cond1 is proved with the estimate 
\begin{align}
\label{eq:est_pi(Q|R)}
\pi(Q|R) \ge 2^{-(s_0-1)d} \pi\ti{good} =: \pi_0. 
\end{align}
Finally, if $Q=R$, we with probability $1$ arrive to the previous situation, so the statement \cond1 is now completely proved with estimate \eqref{eq:est_pi(Q|R)}. 

The statement \cond2 is now easy. First note, that if  $\tau \in \Z$ is such that $2^\tau> D(Q, R)$, then 
\begin{equation}
\label{eq:prob_Q_Q_subset_M}
\bP\{ \exists M \in\cD_\om : \ell(M) =2^\tau, \ Q, R\subset M\ |\, R\} \ge 1 - d \cdot 2 D(Q, R) /2^\tau. 
\end{equation}
Indeed, in one dimension the probability that such $M$ does not exists can be estimated above by  $2 D(Q, R) /2^\tau$, so to get the estimate of non existence in $\R^d$ we can just multiply it by $d$. 
The extra factor $2$ appears in one dimensional case   because  $M$ cannot be moved  continuously, but only in multiples of $\ell(R)$. 

Define 
\[
\tau_0:= \lfloor \log_2 (d D(Q, R) /\pi_0 ) \rfloor + 3, 
\]
so 
\[
d \cdot 2 D(Q, R) /2^{\tau_0} \le \pi_0/2. 
\]

Comparing the estimates  \eqref{eq:est_pi(Q|R)} and \eqref{eq:prob_Q_Q_subset_M} of probabilities, we can get that for fixed $Q$ and $R$ the probability that $Q$ is good and that $Q, R\subset M$ for some $M\in \cD_\om$, $\ell(M) = 2^{\tau_0}$, is at least $\pi_0/2$.

On the other hand, the definition of $\tau_0$ implies that 
$
\ell(M) =2^{\tau_0}  \le 8\cdot d\cdot D(Q,R)/\pi_0  
$, 
so 
\[
 D(Q, R) /\ell(M) \ge \pi_0/8. 
\]
Therefore, the contribution to the sum \eqref{eq:rho_QR} defining $\rho\ci{Q, R}$ of the term with such $M$ alone is at least 
\[
\left( \pi_0/8 \right)^{d+\alpha} \pi_0/2. 
\]
That proves \cond2 and so the lemma. 
\end{proof}


\section{Sharp weighted estimate of dyadic shifts}

Recall, that for a dyadic shift $\sha$ with parameters $m$ and $n$ its complexity  is $r:=\max(m,n)$. 
In this section we assume that a dyadic lattice $\cD$ is fixed. 
Let  $\sha$ be an elementary (possibly generalized) dyadic shift 
\begin{equation}
\label{sha-1}
\sha f(x) = \sum_{Q\in \cD} \int_Q a\ci Q (x, y) f(y) dy 
\end{equation}
where $a\ci Q$ are supported on $Q\times Q$, $\|a\ci Q\|_\infty \le |Q|^{-1}$ (in this section we will  incorporate $|Q|^{-1}$ into $a\ci Q$). Let  $\cA\subset \cD$  be a collection of dyadic cubes. Define the restricted dyadic shift $\sha\ci\cA$ by taking the sum in \eqref{sha-1} only over $Q\in\cA$. 

As it was shown by Theorem \ref{T=avg-shift} that a \cz operator $T$ is a weighted average of dyadic shifts with exponentially (in complexity of shifts) decaying weights, to prove Theorem \ref{A2}  it is sufficient to get an estimate of the norm of dyadic shifts which is polynomial in complexity. The following theorem, indeed, achieves a norm bound which is \emph{quadratic} in complexity. This is the second new main result of this paper and represents a substantial quantitative improvement over earlier sharp weighted bounds for dyadic shifts \cite{LPR,CUMP1}, which were exponential in complexity. Note that the paper \cite{H}, while using dyadic shifts as auxiliary operators in the original proof of Theorem~\ref{A2}, circumvented the question of actually estimating their norm. This is achieved in \cite{H} by going through the test conditions of rather involved paper \cite{PTV1}.
 
\begin{thm}
\label{t:sharp-shift-A2}
Let $\sha$ be an elementary (possibly generalized) dyadic shift of complexity $r$ in $\R^d$, such that all  restricted shifts $\sha\ci \cA$  are uniformly  bounded in $L^2$
\begin{equation}
\label{subshift-bd}
\sup_{\cA\subset \cD} \| \sha\ci\cA\|\ci{L^2\to L^2} =:B_2= B\ci\sha <\infty .
\end{equation}
Then for any $A_2$ weight $w$
\begin{equation}
\label{pol-A2-1}
\| \sha f\|\ci{L^2(w)} \le C 2^{3d/2} (r+1)^2 \left( B_2^2 + 1 \right) [w]\ci{A_2} \|f\|\ci{L^2(w)} , \qquad \forall f\in L^2 (w)
\end{equation}
where  $C$ is an absolute constant. 
\end{thm}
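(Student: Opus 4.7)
The plan is to reduce the weighted estimate to the two-weight testing conditions of Theorem~\ref{sh2w}. Set $d\mu:=w^{-1}\,dx$ and $d\nu:=w\,dx$; then $[\mu,\nu]\ci{A_2}=[w]\ci{A_2}$. The substitution $f=Fw^{-1}$ is an isometry $L^2(\mu)\to L^2(w)$ and sends $\sha_\mu F$ back to $\sha f$, so~\eqref{pol-A2-1} is equivalent to $\|\sha_\mu F\|\ci{L^2(\nu)}\le C\cdot 2^{3d/2}(r+1)^2(B_2^2+1)[w]\ci{A_2}\|F\|\ci{L^2(\mu)}$. By Theorem~\ref{sh2w} this in turn reduces to producing a testing constant $B$ such that
\[
\int_{Q_0}|\sha_\mu \1\ci{Q_0}|^2\,d\nu \le B\,\mu(Q_0), \qquad \int_{Q_0}|\sha^*_\nu \1\ci{Q_0}|^2\,d\mu \le B\,\nu(Q_0), \qquad \forall Q_0\in\cD,
\]
with $B \le C\cdot 2^{2d}(r+1)^2(B_2^2+1)[w]\ci{A_2}^2$. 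Indeed, feeding such $B$ into~\eqref{T1sh} and using $[w]\ci{A_2}\ge 1$ absorbs the lower-order terms $(r+1)[w]\ci{A_2}^{1/2}$ and $r^2[w]\ci{A_2}^{1/2}$ into the dominant $2^{d/2}(r+1)B^{1/2}$, yielding precisely the right-hand side of~\eqref{pol-A2-1}. The two testing inequalities are symmetric under $w\leftrightarrow w^{-1}$, $\sha\leftrightarrow\sha^*$, so it suffices to treat one.

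To produce such $B$, I would adapt the Lacey--Petermichl--Reguera corona scheme, keeping quantitative control of every constant. Fix $Q_0$ and build a stopping family $\mathcal{F}\subset\cD$ inside $Q_0$: starting with $Q_0\in\mathcal{F}$, declare $F$ to be a new stopping cube whenever it is a maximal proper dyadic descendant of its stopping ancestor $\pi_{\mathcal{F}}(F)$ for which either $\langle w\rangle_F\ge 2\langle w\rangle_{\pi_{\mathcal{F}}(F)}$ or $\langle w^{-1}\rangle_F\ge 2\langle w^{-1}\rangle_{\pi_{\mathcal{F}}(F)}$. This standard construction gives the Carleson packing $\sum_{F\in\mathcal{F},\,F\subset F_0}|F|\le 2|F_0|$ for every $F_0\in\mathcal{F}$, and on each corona $\cA_F:=\{P\in\cD:\pi_{\mathcal{F}}(P)=F\}$ the averages $\langle w\rangle_P$ and $\langle w^{-1}\rangle_P$ are comparable to $\langle w\rangle_F$ and $\langle w^{-1}\rangle_F$ respectively, with product bounded by $[w]\ci{A_2}$. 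Decompose $\sha_\mu\1\ci{Q_0}=\sum_{F\in\mathcal{F}}\sha\ci{\cA_F,\mu}\1\ci{Q_0}$. Within a single corona the weights are essentially constant, so the hypothesis~\eqref{subshift-bd} combined with the unweighted weak-$L^1$ bound for restricted shifts (linear in $r$, obtained in \cite{H}) transfers into a per-corona estimate
\[
\|\sha\ci{\cA_F,\mu}\1\ci{Q_0}\|\ci{L^2(\nu)}^2 \le C\cdot 2^{2d}(r+1)^2(B_2^2+1)\,[w]\ci{A_2}\,\mu(F).
\]
Summing over $F\in\mathcal{F}$, using the Carleson packing and the dyadic Carleson Embedding Theorem~\ref{t:Carl} absorbs one additional factor of $[w]\ci{A_2}$ and produces the required $B$.

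The main obstacle is the interaction between neighboring coronas. A shift cube $M\in\cA_F$ of complexity $r$ pairs up inner cubes $(Q',Q'')$ that may lie in different stopping children of $F$, so the natural block decomposition does not honor the stopping partition. The crude Carleson-type bound used for these ``crossing'' contributions in \cite{LPR} costs a factor exponential in $r$. The crucial improvement, borrowed from \cite{H}, is the unweighted bound $\|\sha\ci{\cA}\|\ci{L^1\to L^{1,\infty}}\lesssim (r+1)(B_2+1)$ valid for every restricted shift: since in each corona there are only $O(r)$ scales at which the shift kernel can straddle a stopping child of $F$, combining this weak-$L^1$ bound with Calder\'on--Zygmund interpolation controls the crossing contributions with only a quadratic loss in $r$. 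This careful bookkeeping is what ultimately produces the $(r+1)^2$ factor in~\eqref{pol-A2-1}.
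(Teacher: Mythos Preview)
Your reduction to the two-weight testing conditions via Theorem~\ref{sh2w} is exactly how the paper proceeds, including the symmetry observation and the bookkeeping that turns $B^{1/2}\sim 2^d(r+1)(B_2^2+1)[w]\ci{A_2}$ into the claimed bound. So the first half of your outline matches the paper.

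Where you diverge is in the proof of the testing inequality itself, and here your sketch is too thin to be convincing and misidentifies the real obstacle. The paper does \emph{not} use a two-sided corona. Instead it performs three successive slicings before any stopping: (i) split the lattice into $r+1$ sublattices $\cD_r^j$ with scales separated by $r+1$ levels --- this is what makes each $f_Q$ constant on the $\cD_r^j$-children of $Q$ (needed for the John--Nirenberg machinery, Lemma~\ref{lm:John-Nir-1}) and what gives the $r$-independent weak-$L^1$ bound in Theorem~\ref{LPRnW}; (ii) slice by the level $2^k\le\langle w\rangle_Q\langle w^{-1}\rangle_Q<2^{k+1}$; (iii) inside each level, run a \emph{one-sided} stopping on $\langle w\rangle$ alone; and (iv) a further splitting $\cP_\alpha(R)$ by the ratio $\langle w\rangle_Q/\langle w\rangle_R$. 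The quadratic factor $(r+1)^2$ arises as one $(r+1)$ from step~(i) and one $(r+1)$ from Theorem~\ref{sh2w}; nowhere does it come from ``$O(r)$ scales straddling a stopping child.''

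Your assertion that ``within a single corona the weights are essentially constant'' is not enough: only \emph{averages} are controlled, and you still need to integrate $|f\ci{\cP(R)}|^2$ against the pointwise weight $w^{-1}$. This is precisely what forces the John--Nirenberg distributional estimate (Lemma~\ref{47}), which converts the weak-$L^1$ bound into exponential decay of level sets in both Lebesgue and $w^{-1}$ measure, and only then yields the per-corona $L^2(w^{-1})$ bound. Your outline jumps from weak-$L^1$ to the $L^2$ conclusion by invoking ``Calder\'on--Zygmund interpolation,'' which is not how the argument goes. Finally, the ``crossing'' issue you describe is a non-problem: the decomposition $\sha(\1\ci{Q_0}w)=\sum_R f\ci{\cP(R)}$ is by the location of the \emph{outer} shift cube, so inner cubes landing in different stopping children cause no interaction term at all. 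The exponential blow-up in the original \cite{LPR} came from a weak-$L^1$ constant exponential in $r$, not from crossing coronas.
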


Note that for dyadic shifts we are considering (that is non-generalized dyadic shifts and paraproducts), the assumption about uniform boundedness of $\sha\ci \cA $ is satisfied automatically. Namely, any non-generalized dyadic shift is a contraction in $L^2$, so \eqref{subshift-bd} holds with $B=1$. It is also easy to see that for the paraproducts $\|\sha\ci \cA\|\ci{L^2\to L^2} \le \|\sha \|\ci{L^2\to L^2}$.

The  estimate \eqref{pol-A2-1} with $C$ depending exponentially on $r$ was proved (for non-generalized dyadic shifts) in \cite{LPR}. However, careful analysis of proofs there allows (after some modifications)  to obtain polynomial estimates.

Compared to \cite{LPR}, the main new ingredients here are:
\begin{itemize}
    \item The sharp two weight estimate of Haar shifts, see above Theorem \ref{sh2w}, which is essentially the main result of \cite{NTV6} (with the additional assumptions about ``size'' of the operator), with the dependence of the estimates on all parameters spelled out. 
    \item  Proposition 5.1 of \cite{H}, reproduced as Theorem \ref{LPRnW} below, which gives \emph{linear} in complexity of $\sha$ estimate of the unweighted weak $L^1$ norm of $\sha$; the corresponding estimate in \cite{LPR} was exponential in complexity.   
\end{itemize}

Replacing $f$ in \eqref{pol-A2-1} by $fw^{-1}$ and noticing that $\|fw^{-1}\|\ci{L^2(w)} = \|f\|\ci{L^2(w^{-1})}$ we can rewrite it as 
\begin{equation}
\label{pol-A2-2}
\| \sha (fw^{-1}) \|\ci{L^2(w)} \le C 2^{3d/2} (r+1)^2  \left( B_2^2 + 1 \right)     [w]\ci{A_2} \|f\|\ci{L^2(w^{-1})}\,, \qquad \forall f\in L^2(w^{-1}),  
\end{equation}
so we are in the settings of Theorem \ref{sh2w} with $d\mu=w^{-1} dx$, $d\nu = w dx$. By Theorem \ref{sh2w}, to prove estimate \eqref{pol-A2-2} is is sufficient to show that 
\begin{align}
\notag
\int_Q |\sha (\1\ci Q w^{-1})|^2 w dx  & \le B [w]\ci{A_2}^2 w^{-1}(Q), \qquad \forall f\in L^2(w^{-1}) \\
\label{A2-test-1}
\int_Q |\sha (\1\ci Q w)|^2 w^{-1} dx   &\le B [w]\ci{A_2}^2 w(Q), \qquad \forall f\in L^2(w) 
\end{align}
where  
\[
B^{1/2} = C 2^{d} (r+1)  \left( B_2^2 + 1 \right) 
\] 
with an absolute constant $C$.

Since $[w^{-1}]\ci{A_2}=[w]\ci{A_2}$, one can get one estimate from the other by replacing $w$ by $w^{-1}$. Thus, to prove Theorem \ref{t:sharp-shift-A2} and so the main result (Theorem \ref{A2}) we only need to prove one of the above estimates, for example \eqref{A2-test-1}. 

The rest of the section is devoted to proving \eqref{A2-test-1}

\subsection{Weak type estimates for dyadic shifts}

Let $\|\sha\|_2$ be a shorthand for $\|\sha\|\ci{L^2\to L^2}$. 
We say that a shift $\sha$ has \emph{scales separated by $r$ levels}, if all cubes $Q$ with $a_Q\not\equiv 0$ in \eqref{sha-1} satisfy $\log_2\ell(Q)\equiv j\mod r$ for some fixed $j\in\{0,1,\ldots,r-1\}$.

The following result reproduces Proposition~5.1 of \cite{H} with an additional observation concerning shifts which have their scales separated. This seemingly technical variant allows us to obtain the asserted quadratic, rather than cubic, dependence on complexity in Theorem~\ref{t:sharp-shift-A2}.

\begin{thm}
\label{LPRnW}
Let $\sha$ be a generalized elementary dyadic shift with parameters $m,n$. 
Then $\sha$ has weak type $1$-$1$ with the estimate
\begin{equation}
\label{LPR2nW}
\|\sha \|\ci{L^1\to L^{1,\infty}} \le C(d, m, \|\sha\|_2) = 2^{d+2}\|\sha\|_2^2 + 1 + 4m, 
\end{equation}
 meaning that for all $f\in L^1$ and for all $\la>0$
\[
\left| \left\{ x: |\sha f (x)|>\lambda \right\}\right| \le \frac{C(d, m, \|\sha\|_2)}{ \la } \|f\|_1. 
\]
If $\sha$ has scales separated by $r\geq m$ levels, then we have the improved estimate
\begin{equation*}
  \|\sha\|_{L^1\to L^{1,\infty}}\leq C(d,1,\|\sha\|_2)=2^{d+2}\|S\|_2^2+5.
\end{equation*}
\end{thm}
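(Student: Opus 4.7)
My plan is to derive Theorem~\ref{LPRnW} via the standard Calder\'on--Zygmund decomposition of $f$ at height $\la$, combined with a careful pointwise analysis of $\sha b_j$ outside the bad cube $Q_j$ that avoids the exponential-in-$m$ loss of naive kernel bounds. Write $f=g+b$, $b=\sum_j b_j$, where $b_j$ is supported on a disjoint CZ cube $Q_j$ with $\int b_j=0$ and $\|b_j\|_1\le 2^{d+1}\la|Q_j|$, while $\|g\|_\infty\le 2^d\la$, $\|g\|_2^2\le 2^d\la\|f\|_1$, $\sum_j\|b_j\|_1\le 2\|f\|_1$ and $|\Om|:=|\bigcup_j Q_j|\le\|f\|_1/\la$. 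The good part is handled by Chebyshev and $L^2$-boundedness:
\[
|\{|\sha g|>\la/2\}|\le 4\|\sha\|_2^2\,\|g\|_2^2/\la^2 \le 2^{d+2}\|\sha\|_2^2\,\|f\|_1/\la,
\]
which supplies the first summand of \eqref{LPR2nW}.

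The key step is a pointwise bound for $\sha b_j$ on $\R^d\setminus R$, where $R:=Q_j$. From the kernel representation $\sha b_j(x)=\sum_{Q\in\cD}\sum_{Q',Q''}|Q|^{-1}(b_j,h_{Q'}^{Q''})\,h_{Q''}^{Q'}(x)$, note that every generalized Haar function $h_{Q'}^{Q''}$ is constant on the dyadic children of $Q'$; hence $\int b_j=0$ forces $(b_j,h_{Q'}^{Q''})=0$ whenever $R\subsetneq Q'$. Combined with $Q'\subset Q\cap R$ and $\ell(Q')=2^{-m}\ell(Q)$, this restricts the contributing outer cubes to two cases: $Q\subseteq R$ (whose contribution is supported in $R$, hence irrelevant outside) or $R\subseteq Q$ with $\ell(Q)\le 2^m\ell(R)$, i.e.\ $Q=R^{(s)}$ for some $s\in\{0,1,\ldots,m\}$; and $s=0$ again contributes only inside $R$. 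For each $s\in\{1,\ldots,m\}$ and $x\in R^{(s)}$, at most one $Q''\subset R^{(s)}$ of the prescribed size $2^{s-n}\ell(R)$ contains $x$; summing over the admissible $Q'\subset R$ and using the normalization $\|h_{Q'}^{Q''}\|_\infty\|h_{Q''}^{Q'}\|_\infty\le 1$ together with $\sum_{Q'\subset R}\|b_j\|_{L^1(Q')}=\|b_j\|_1$ yields the contribution bound $|R^{(s)}|^{-1}\|b_j\|_1$. Therefore
\[
|\sha b_j(x)|\le\sum_{s=1}^{m}\frac{\1\ci{R^{(s)}}(x)}{|R^{(s)}|}\|b_j\|_1,\qquad x\notin R,
\]
and integrating gives $\int_{\R^d\setminus R}|\sha b_j|\,dx\le m\|b_j\|_1$.

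Summing in $j$, using $\sum_j\|b_j\|_1\le 2\|f\|_1$ and Chebyshev, produces $|\{x\notin\Om:\,|\sha b(x)|>\la/2\}|\le 4m\|f\|_1/\la$; combining with $|\Om|\le\|f\|_1/\la$ yields the bad-part estimate $(1+4m)\|f\|_1/\la$ and hence the full bound \eqref{LPR2nW}. For the improvement under scale separation, observe that if all outer cubes $Q$ with $a_Q\not\equiv 0$ satisfy $\log_2\ell(Q)\equiv j_0\pmod r$ with $r\ge m$, then among the $m$ consecutive ancestor sizes $\log_2\ell(R^{(s)})=\log_2\ell(R)+s$, $s\in\{1,\ldots,m\}$, at most one lies in the prescribed residue class; the pointwise bound then reduces to a single term, so $\int_{\R^d\setminus R}|\sha b_j|\,dx\le\|b_j\|_1$ and the total bad-part contribution becomes $5\|f\|_1/\la$. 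The principal obstacle is isolating the admissible outer cubes $Q$ for $\sha b_j$ via the cancellation $\int b_j=0$ combined with the generalized-Haar structure of $h_{Q'}^{Q''}$; once this restriction and the ``at most one $Q''$ containing $x$'' observation are in hand, the rest is routine CZ machinery.
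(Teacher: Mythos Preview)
Your proof is correct and follows essentially the same approach as the paper: Calder\'on--Zygmund decomposition, the good part via Chebyshev and $L^2$-boundedness, and the bad part by observing that only the ancestors $R^{(1)},\ldots,R^{(m)}$ of a CZ cube $R$ can contribute outside $R$ (using $\int b_j=0$), with each contributing at most $\|b_j\|_1$ to the $L^1$-norm. The only cosmetic difference is that the paper phrases the $m\|b_j\|_1$ bound via the $L^1\to L^1$ contraction property of the integral operator with kernel $a_Q$, whereas you obtain it from the equivalent pointwise bound $|R^{(s)}|^{-1}\|b_j\|_1\,\mathbf{1}_{R^{(s)}}$; the scale-separation improvement is handled identically.
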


\begin{proof}

Our shift $\sha$  can be written (see \eqref{sha1}) as 
\[
\sha f(x) =  \sum_{Q\in \cD}  \int  a\ci Q(x,y) f(y) dy\,, 
\]
where $a\ci Q$ is supported on $Q\times Q$ and $\|a\ci Q\|_\infty \le |Q|^{-1}$ (we incorporated the factor $|Q|^{-1}$ from \eqref{sha1} into $a\ci Q$ here). 
It follows from the representation \eqref{sha-aQ} of $a\ci Q$ that for fixed $x$ the function $a\ci Q(x, \fdot)$ is  constant on cubes $Q'\in\cD$, $\ell(Q')< 2^{-m} \ell(Q)$.

%

To estimate its weak norm we use the standard \cz decomposition at height $\lambda>0$ with respect to the  dyadic lattice $\cD$. 
Namely, as it is well known, see for example \cite[p.~286]{Grafakos_CFA-book_2008}, given $f\in L^1$ there exists a decomposition $f=g+b$, $b=\sum_{Q\in\cQ} b\ci Q$,  where $\cQ\subset \cD$ is a collection of disjoint dyadic cubes, such that
\begin{enumerate}
    \item $\|g\|_1\le  \|f\|_1$, $\|g\|_{\infty} \le 2^d\,\lambda$.
    \item Each function $b_Q$ is supported on a cube $Q$ and
    \[
    \|b\ci Q\|_1 \le  2\cdot\|\1_Q f\|_1,  
    \qquad \int_{\R^d} b\ci Q \,dx =0.
    \]
    \item $\sum_{Q\in\cQ} |Q| \le \lambda^{-1} \|f\|_1$. 
\end{enumerate}

The property \cond1 of the \cz decomposition implies that 
\begin{align}
    \label{CZd-f-L2}
    \|f\|_2^2 \le 2^d \lambda \|f\|_1
\end{align}

As usual, we can estimate
\begin{align*}
|\{x: \sha f(x)|>\lambda\} |  \le |\{x: |\sha g(x)|>\lambda/2\} | + |\{x: |\sha b (x)|>\lambda/2\} |
\end{align*}
(one of the two terms should be at least half of the sum). The measure of the first set is estimated using the boundedness of $\sha$ in $L^2$
\begin{align*}
|\{x: |\sha g(x)|>\lambda/2\} | \le \|\sha\|_2^2 \|g\|_2^2 \frac{4}{\lambda^2} \le \|\sha\|_2^2 \frac{2^{d+2}}{\lambda}\|f\|_1 ,
\end{align*}
where $\|\sha\|_2$ is the shorthand for $\|\sha\|\ci{L^2\to L^2}$;  we used \eqref{CZd-f-L2} to get the second inequality.

 To estimate $ |\{x: |\sha b(x)|>\lambda/2\} |$ we fix a $Q\in \mathcal{Q}$ and write a pointwise inequality:
\[
|\sha b_Q(x)| \le  \sum_{R\in \cD:\, Q \subsetneqq R} \biggl|\int_R a\ci R(x,y) b\ci Q(y)dy \biggr| + \biggl|\sum_{R\in \cD:\, R \subset Q} \int_R a\ci R(x,y)b\ci Q(y)dy \biggr|\,.
\]
Therefore, summing in $Q\in \mathcal{Q}$, we get
\begin{align*}
|\sha b(x)| 
& \le \sum_{Q\in\cQ} \sum_{R\in \cD:\, Q \subsetneqq R} \biggl|\int_R a\ci R(x,y) b\ci Q(y)dy \biggr| +
\sum_{Q\in\cQ} \biggl|\sum_{R\in \cD:\, R \subset Q} \int_R a\ci R(x,y)b\ci Q(y)dy \biggr|
\\
& =: A(x) +B(x)\,.
\end{align*}
Hence, using again the fact that  one of the two terms should at least a half of the sum, we can estimate
\begin{equation*}
|\{x: |\sha b(x)|>\lambda/2\} | \le |\{x: A(x)>\lambda/2\}|+ |\{x: B(x)>0\}|\,.
\end{equation*}
The second set is obviously inside $\cup_{ Q\in\cQ} Q$: indeed the function $B(x)$ vanishes outside this set because $a\ci R(x,y)=0$ for all $x\notin R$, and $R\subset Q$.  
So, using the property \cond3 of the \cz decomposition, we can estimate the measure of the second set as   
\[
|\{x: B(x)>0\}| \le 
\sum_{Q\in\mathcal{Q}}|Q|\le \frac{1}{\lambda}\|f\|_1. 
\]

To estimate the first measure we want to show that $\|A\|_1\le C\|f\|_1$, then clearly
\begin{equation}
\label{meas_A>la}
 |\{x: A(x)>\lambda/2\}| \le\frac 2\la \|A\|_1 \le \frac{2C}{\la} \|f\|_1. 
\end{equation}
We will estimate the norm of each term in $A$ separately. Let us
fix $Q\in\cQ$ and let us consider 
\[
A\ci Q (x) :=\sum_{R\in \cD, Q \subsetneqq R} \left| \int_R a\ci R(x,y) b\ci Q(y)dy \right| .
\] 
Since the function $b\ci Q$ is orthogonal to constants, and the function $a\ci R(x, \fdot)$ is constant on cubes $Q\in\cD$, $\ell(Q) <2^{-m}\ell(R)$, we can see that the only cubes $R$ which may contribute to $A_Q$ are the ancestors of $Q$ of orders $1,\ldots,m$. So, in general, there are at most $m$ non-zero terms in $A_Q$; if $\sha$ has scales separated by $r\geq m$ levels, there is at most one.

Recalling that for an integral operator $T$ with kernel $K$
\[
\|T\|\ci{L^1\to L^1} = \esssup_{y} \|K(\fdot, y)\|_1, 
\]
we can see that the integral operator with kernel $a\ci R$ is a contraction in $L^1$. Since at most $m$ such operators contribute to $A\ci Q$,
\[
\|A\ci Q\|_1 \le m \|b\ci Q\|_1 \le  2m\|\1_Q f\|_1;
\]
the last inequality here holds because of property \cond2 of the \cz decomposition.

Summing over all $Q\in\cQ$ we get 
\begin{align*}
\| A\|_1 \le  2m \sum_{Q\in\cQ} \|\1_Q f\|_1 \le 2m\|f\|_1.
\end{align*}
so (see \eqref{meas_A>la})
\[
|\{x: A(x)>\lambda/2\}| \le\frac 2\la \|A\|_1 \le \frac{4m}{\la} \|f\|_1.
\]
 If $\sha$ has scales separated by $r\geq m$ levels, we can take $1$ in place of $m$ in the last few estimates.
\end{proof}

Using this improved weak type estimate one can get the desired estimate \eqref{A2-test-1} by following the proof in \cite{LPR} and keeping track of the constants. 
However, there are several other places in \cite{LPR}, where the curse of exponentiality appears.
So for the convenience of the reader, we are doing all necessary estimates  below. Note that an analogous modification of \cite{LPR} was already carried out in \cite{H}; here we present yet another argument  in the spirit \cite{LPR} but with  modifications pertinent to eliminating the curse of exponentiality.

\subsection{First slicings} 

Let us fix $Q_0\in\cD$, and let us prove estimate \eqref{A2-test-1} for $Q=Q_0$. Recall, that $\sha$ is an integral operator with kernel $\sum_{Q\in\cD} a\ci{Q} (x, y)$, where $a\ci{Q}$ as in the previous section ($|Q|^{-1}$ is incorporated in $a\ci Q$). 

Define 
\begin{align*}
f\ci Q (x)  := \int_{Q_0} a\ci Q (x, y) w(y) dy, 
\end{align*}
so 
\begin{align*}
\sha (\1\ci{Q_0} w) = \sum_{Q\in\cD:\, Q\cap Q_0\ne \varnothing} f\ci Q =: f
\end{align*}
We can split $f$ into ``inner'' and ``outer'' parts, 
\begin{align*}
f = \sum_{Q\in\cD:\, Q\subset Q_0} f\ci Q + \sum_{Q\in\cD:\, Q_o\subsetneqq Q} f\ci Q =: f\ti i + f\ti o
\end{align*}

The ``outer'' part $f\ti o$ is easy to estimate. Since $\| a\ci Q(x, \fdot) \|_\infty \le |Q|^{-1}$, we can write for $Q_0\subsetneqq Q$
\begin{align*}
| f\ci Q (x)|\le w(Q_0) |Q|^{-1}
\end{align*}
and summing over all $Q$, $Q_0\subsetneqq Q$
\begin{align*}
|f\ti o (x) | \le |Q_0|^{-1} w(Q_0) \sum_{Q\in\cD :\, Q_0\subsetneqq Q} |Q|^{-1} |Q_0| =
|Q_0|^{-1} w(Q_0) \sum_{k=1}^\infty 2^{-kd} \le |Q_0|^{-1} w(Q_0). 
\end{align*}
Therefore,
\begin{align*}
\int_{Q_0} |f\ti o|^2 w^{-1} \le |Q_0|^{-2} w(Q_0)^2 w^{-1}(Q_0) \le [w]\ci{A_2} w(Q_0),  
\end{align*}


so $ \| \1\ci{Q_0} f\ti o \|\ci{L^2(w^{-1})}  \le [w]\ci{A_2}^{1/2} w(Q_0)^{1/2}$, and it only remains to estimate 
$\| f\ti i\|_{L^2(w^{-1})}$. 

Now we perform the first splitting. Let $r$ be the complexity of the shift $\sha$. Let us split the lattice $\cD$ into $r+1$ lattices $\cD_r^j$, $j=0, 1, \ldots, r$, where each lattice $\cD_r^j$ consists  of the cubes $Q\in \cD$ of size $2^{j - (r+1)\tau}$, $\tau\in \Z$.

If we can show that uniformly in $j$
\begin{align}
\label{eq:sum_D_r}
\int_{Q_0} \Bigl| \sum_{Q\in\cD^j_r} f\ci Q \Bigr|^2 w^{-1} \le C 2^{2d}   \left( B_2^2 + 1 \right)^2 [w]\ci{A_2}^2 w(Q_0),     
\end{align}
where $C$ is an absolute constant,  then we are done.   Indeed taking the sum over all $j=0, 1, \ldots , r$ we only multiply the estimate of the norm by $r+1$, so to get from the estimate \eqref{eq:sum_D_r}  to the desired estimate \eqref{A2-test-1} we just need to multiply the right side of \eqref{eq:sum_D_r} by $(r+1)^2$. 


The main reason for the this splitting of $\cD$ is that it simplifies the structure meaning that for $Q\in\cD^j_r$ the function $f\ci Q$ is constant on the children of $Q$ in the lattice $\cD^j_r$.  Also note that the shift $\sha^j f(x):=\sum_{Q\in\cD^j_r}\int_Q a_Q(x,y)f(y)dy$ has scales separated by $r+1>m$ levels, and $\1_{\cD^j_r}(Q)\cdot f_Q=\sha^j(\1_{Q_0}w)$.

Let us fix $j$, and let us from now on consider the lattice $\cD_r:= \cD_r^j$.  Since $j$ is not important in what follows, we will skip it and use the notation $\cD_r$, freeing the symbol $j$ for use in a different context. We also denote $\sha^j$ simply by $\sha$, bearing in mind the separation of scales which allows the use of the sharper estimate in the weak-type bound of Theorem~\ref{LPRnW}.

Now we split the lattice $\cD_r$ into the collections $\cQ_k$, $k\in\Z_+$, $ k < \log_2 ([w]\ci{A_2})$, where each $\cQ_k$ is the set of all cubes $Q\in \cD_r$ such that 
\begin{align}
\label{eq:Q_k}
2^{k}\le \frac{w(Q)}{|Q|} \cdot \frac{w^{-1}(Q)}{|Q|} < 2^{k+1}
\end{align}

We want to show that 
\begin{align}
\label{eq:sum_Q_k}
\int_{Q_0} \Bigl| \sum_{Q\in \cQ_k:\, Q\subset Q_0} f\ci Q \Bigr|^2 w^{-1} \le C_1 2^k [w]\ci{A_2}\, w(Q_0), 
\end{align}
where $C_1 = C 2^{2d}   \left( B_2^2 + 1 \right)^2$ is the constant in the right side of \eqref{eq:sum_D_r}. Then, using triangle inequality and summing the geometric progression we get 
\begin{align*}
\biggl\| 1\ci{Q_0} \sum_{Q\in\cD_r} f\ci Q\biggr\|_{L^2(w^{-1})} \le C_1^{1/2} [w]\ci{A_2}^{1/2} \ \sum_{k\in \Z_+:\,  k < \log_2([w]\ci{A_2})} 2^{k/2} \, w(Q_0)< 4 C_1^{1/2} [w]\ci{A_2}\, w(Q_0), 
\end{align*}
so \eqref{eq:sum_Q_k} implies that \eqref{eq:sum_D_r} holds with $C = 16 C_1$. 


So, we reduced the main result to the estimate \eqref{eq:sum_Q_k} with  $C_1  = C 2^{2d}   \left( B_2^2 + 1 \right)^2 $.   Note, that if we prove \eqref{eq:sum_Q_k} for $Q_0\in \cQ_k$, then we are done, because for general $Q_0$ we can add up the estimate for maximal subcubes of $Q_0$ belonging to $\cQ_k$.

\subsection{Stopping moments and Corona decomposition}

Let us suppose that the weight $w$ and the lattices $\cD_r$ and $\cQ=\cQ_k\subset \cD_r$  described above are fixed. 

Given a cube $Q_0\in\cQ=\cQ_k$ let us construct the generations 
 $\cG_\tau^*=\cG_\tau^*(Q_0)=\cG_\tau^*(Q_0, w, \cQ)$, $\tau\in \Z_+$ of stopping cubes as follows. 
Define the initial generation $\cG_0^*$ to be the cube $Q_0$. 

For all cubes  $Q\in \cG_\tau^*$   we consider maximal cubes $Q'\in\cQ$, $Q'\subset Q$ such that
\[
\frac{w(Q')}{|Q'|} > 4 \frac{w(Q)}{|Q|};
\]
the collection of all such cubes $Q'$ is the next generation $\cG_{\tau+1}^*$ of the stopping cubes. 

Let $\cG^*=\cG^*(Q_0):= \cup_{\tau\ge 0}\cG_\tau^* $ be the collection of all stopping cubes. 

Note, that if we start constructing stopping moments from a cube $Q\in\cG^*$, the stopping moments $\cG^*(Q)$ will agree with $\cG^*$, meaning that 
\begin{align*}
\cG^*(Q) =\{ Q'\in \cG^*: Q'\subset Q\}. 
\end{align*}

Let us introduce the last piece of notation. For a cube $Q\in\cG^*$  let us define $\cQ(Q) := \{Q'\in \cQ: Q'\subset Q\}$, and let 
\begin{align*}
\cP(Q):= \cQ(Q) \setminus \bigcup_{Q'\in \cG^*:\, Q'\subsetneqq Q} \cQ(Q'). 
\end{align*}
The above definitions make sense for arbitrary $Q\in\cQ$, but we will use it only for $Q\in\cG^*$, so we included this assumption in the definition.  Note that for $Q_0\in \cQ$ the set $\cQ(Q_0)$ admits  the following disjoint  decomposition
\begin{align}
    \label{dec_QQ_0}
    \cQ(Q_0)  = \bigcup_{Q\in \cQ^*(Q_0)} \cP(Q)
\end{align}

\subsubsection{Properties of stopping moments}
It follows from the construction of $\cG^*$ that  if $R\in\cG^*$ and $Q$ is a maximal cube in $\cG^*$ such that $Q\subsetneqq R$, then  
\begin{equation}
\label{up4}
\frac{w(Q)}{|Q|} > 4 \frac{w(R)}{|R|}\,.
\end{equation}
The estimate \eqref{up4} implies 
\begin{equation}
|Q| \le \frac{|R|}{4}\cdot \frac{w(Q)}{w(R)}, 
\end{equation}
and summing over all such maximal $Q\in\cG^*$, $Q\subsetneqq R$ (assume that $R\in\cG_\tau^*$) we get
\begin{equation}
\label{eq:Carl-1}
\Bigl| \bigcup_{Q\in\cG^*:\,Q\subsetneqq R} Q \Bigr| =\sum_{Q\in \cG_{\tau+1}^*: \,Q\subsetneqq R} |Q| \le \frac{|R|}{4 w(R)} \sum_{Q\in \cG_{\tau+1}^*: \,Q\subsetneqq R} w(Q) \le \frac14 |R|,  
\end{equation}
for all $R\in \cG^*$.

Repeating this estimate for each  $Q$ and summing over the generations we get
\[
\sum_{Q\in \cG^*: Q\subsetneqq R} |Q| \le |R| \sum_{n=1}^\infty 4^{-n} = \frac13 |R|.
\]
Adding $|R| $ to this sum we get that the following \emph{Carleson property} of the stopping moments $\cG^*$ 
\begin{equation}
\label{carlL}
\sum_{Q\in \cG^*: Q\subset R} |Q| \le   \frac43 |R| .
\end{equation}
It is easy to see that this estimate holds for all $R\in\cD$, not just for $R\in\cG^*$: one just needs to consider maximal cubes $R'\in\cG^*$, $R'\subset R$ and apply \eqref{carlL} to each of  these cubes. 

Iterating \eqref{eq:Carl-1} and summing over all generations we get 
\begin{equation}
\label{qadrL}
\biggl\|\sum_{Q\in\cG^*, Q\subset R} \1\ci{Q} \biggr\|_2 \le  |R|^{1/2} \sum_{k=0}^\infty 2^{-k} = 2|R|^{1/2}\,.
\end{equation}

We need the following simple lemma      
\begin{lm}
\label{35}
For any $R\in \cD$
\begin{equation}
\label{carlLW}
\sum_{Q\in\cG^*, Q\subset R} w(Q) \le C[w]_{A_2} w(R)\,, 
\end{equation}
where $C$ is an absolute constant.   
\end{lm}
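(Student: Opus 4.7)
The plan is to apply the Dyadic Carleson Embedding Theorem (Theorem~\ref{t:Carl}) with Lebesgue measure $\mu=dx$ and the test function $f = w^{1/2}\1\ci R$. By the remark immediately following \eqref{carlL}, the Carleson property $\sum_{Q\in\cG^*, Q\subset R}|Q| \le (4/3)|R|$ holds for every $R\in\cD$, so setting $a\ci Q := (3/4)|Q|$ for $Q\in\cG^*$ and $a\ci Q := 0$ otherwise yields a sequence satisfying the required condition $\sum_{Q\subset R} a\ci Q \leq |R|$. Since $\|f\|_{L^2}^2 = w(R)$, Carleson embedding will produce the bound
\[
\frac{3}{4}\sum_{Q\in\cG^*, Q\subset R}|Q|\, f_Q^{\,2} \;\le\; 4\, w(R),\qquad f_Q := |Q|^{-1}\!\!\int_Q w^{1/2}\,dx.
\]

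The key step is to derive the pointwise lower bound $f_Q^{\,2} \ge \langle w\rangle\ci Q/[w]\ci{A_2}$, equivalently $|Q|\, f_Q^{\,2} \ge w(Q)/[w]\ci{A_2}$. This is a standard $A_\infty$-type comparison of the square-root average with the ordinary average, obtained from two Cauchy--Schwarz inequalities: first $|Q|^2 = (\int_Q w^{1/4}\cdot w^{-1/4})^2 \le (\int_Q w^{1/2})(\int_Q w^{-1/2})$, and second $\int_Q w^{-1/2} \le (w^{-1}(Q)\,|Q|)^{1/2}$. Combining these yields $\langle w^{1/2}\rangle\ci Q \ge \langle w^{-1}\rangle\ci Q^{-1/2}$, and squaring, $f_Q^{\,2} \ge 1/\langle w^{-1}\rangle\ci Q$. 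The $A_2$ hypothesis in the form $1/\langle w^{-1}\rangle\ci Q \ge \langle w\rangle\ci Q/[w]\ci{A_2}$ finishes this step.

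Combining the two ingredients will give
\[
\frac{3}{4\,[w]\ci{A_2}}\sum_{Q\in\cG^*,\, Q\subset R} w(Q) \;\le\; \frac{3}{4}\sum_{Q\in\cG^*, Q\subset R}|Q|\,f_Q^{\,2} \;\le\; 4\, w(R),
\]
so the lemma follows with an absolute constant (one may take $C = 16/3$).

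There is no real obstacle here; the only thing to notice is the right choice of test function $f=w^{1/2}\1\ci R$, which turns the trivial Cauchy--Schwarz bound on $\langle w^{1/2}\rangle\ci Q$ into an $A_\infty$-type lower bound that correctly produces the $[w]\ci{A_2}$ factor claimed in the statement. All other ingredients---the Lebesgue Carleson property \eqref{carlL} and the Carleson Embedding Theorem \ref{t:Carl}---are already established in the text.
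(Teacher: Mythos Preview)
Your proof is correct and is essentially identical to the paper's own argument: both apply the Carleson Embedding Theorem with Lebesgue measure and test function $w^{1/2}\1\ci R$, use the Carleson property \eqref{carlL}, and derive the same pointwise inequality $\langle w\rangle\ci Q \le [w]\ci{A_2}\langle w^{1/2}\rangle\ci Q^2$ via Cauchy--Schwarz, arriving at the same constant $C=16/3$.
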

\begin{proof}
The Carleson Embedding Theorem (see Theorem \ref{t:Carl} above) applied to $\1\ci R$ together with the Carleson property \eqref{carlL} imply that 
\[
\sum_{Q\in\cG^*, Q\subset R} \left( {\fint_Q} w^{1/2} \right)^2 |Q| \le C \|\1\ci R w^{1/2} \|_2^2 = C w(R). 
\]
(the best constant is $C=4\cdot 4/3$). But 
\begin{align*}
\left( \fint w^{1/2} \right)^{-1} & \le \fint w^{-1/2} \le \left( \fint w^{-1} \right)^{1/2}  
&& \text{by Cauchy--Schwartz}
\\
& \le [w]\ci{A_2}^{1/2} \left( \fint_Q w  \right)^{-1/2}    &&
\text{because } \left( \fint_Q w  \right) \left( \fint_Q w^{-1}  \right) \le [w]\ci{A_2}, 
\intertext{so} 
 & \fint_Q w \le [w]_{A_2}\left( \fint_Q w^{1/2}  \right)^{2}  &&
\end{align*}
and the lemma is proved (with $C=16/3$).
\end{proof}

This proof was  (essentially) present in \cite{Wav_PastFuture}. In \cite{LPR} a different proof, using a clever  iteration argument   and giving  the better constant $C= 16/9$, was presented. 

\subsection{John--Nirenberg type estimates}
\label{s:J-N}

Given a collection $\cA$ of cubes, $\cA\subset \cD_r$, define the 
function $f\ci{\!\!\cA}$ by  
\begin{align*}
f\ci{\!\!\cA} := \sum_{Q\in \cA} f\ci Q. 
\end{align*}
For the cube cube $Q_0\in \cG^*$ consider the function $f\ci{\!\!\cQ(Q_0)}$. By \eqref{dec_QQ_0} the function $f\ci{\!\!\cQ(Q_0)}$ can be decomposed as 
\begin{equation}
\label{dec-f_QQ_0}
f\ci{\!\!\cQ(Q_0)}=\sum_{R\in\cG^*} f\ci{\!\!\cP(R)}\,,
\end{equation}
where recall $\cG^*:=\cG^*(Q_0)$ is the collection of stopping cubes. 

The main reason for introducing this decomposition is that, as we will show below, the functions $f\ci{\cP(R)}$ behave in many respects as BMO functions: they have exponentially decaying distribution functions, so, in particular all $L^p$ norms for $p<\infty$ are equivalent. 

In the  proof of these facts the weak $L^1$ estimate of dyadic shifts (Theorem \ref{LPRnW}) is used. 

The first lemma, which is Lemma 3.15 in \cite{LPR}, is a simple observation, that for the John--Nirenberg estimates of the distribution function it is sufficient to have weak type estimates.

Recall that $\cD_r$ is $2^{r}$-adic lattice, i.e.~the children $Q'$ of $Q$ satisfy $\ell(Q') =2^{-r} \ell(Q)$. 

\begin{df}
\label{df:maxf}
Let $\phi\ci Q$, $Q\in \cD_r$ be a collection of functions such that $\phi\ci Q$ is supported on $Q$ and is constant on children (in $\cD_r$) of $Q$. For $R_0\in\cD_r$ let   $\phi^*\ci{R_0}$ be a maximal function
\[
\phi^*\ci{R_0} (x) := \sup_{Q\in\cD_r: Q\ni x} \ \Bigl| \sum_{R\in\cD_r :\, Q\subsetneqq R \subset R_0} \phi\ci R(x) \Bigr|. 
\] 
\end{df}

\begin{lm}
\label{lm:John-Nir-1}
Let $\phi\ci Q$, $Q\in\cD_r$ be a collection of functions such that 
\begin{enumerate}
    \item $\phi\ci Q$ is supported on $Q$ and constant on the children (in $\cD_r$) of $Q$;
    \item $\|\phi\ci Q \|_\infty\le 1$; 
    \item There exists $\delta\in(0,1)$ such that for all cubes $R\in \cD_r$ 
    \[
    \bigl| \Bigl\{  x\in R :   \phi^*\ci R(x)  > 1 \Bigr\} \bigr| \le \delta |R| \,.
    \]
\end{enumerate}
Then for all $R\in\cD_r$ and for all $t\geq 0$
\[
\bigl| \Bigl\{  x\in R :   \phi^*\ci R(x)  > t \Bigr\} \bigr| \le
\delta^{(t-1)/2} |R|\,.
\]
\end{lm}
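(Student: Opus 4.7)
The plan is to run a Calder\'on--Zygmund / stopping-time style iteration. Let me set
\[
\mu(t) := \sup_{R \in \cD_r} \frac{\bigl|\{x \in R : \phi^*_R(x) > t\}\bigr|}{|R|},
\]
so that hypothesis~(iii) reads $\mu(1) \le \delta$ and the goal becomes $\mu(t) \le \delta^{(t-1)/2}$. I will deduce this from the one-step recursion $\mu(t+2) \le \delta \cdot \mu(t)$, valid for all $t \ge 0$; iterating from $\mu(1) \le \delta$ yields $\mu(2k+1) \le \delta^{k+1}$ for every integer $k \ge 0$, and monotonicity of $\mu$ then fills in the non-odd values of $t$.

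To set up the recursion, fix $R \in \cD_r$ and, for $Q \in \cD_r$ with $Q \subsetneqq R$, write
\[
T_Q(x) := \sum_{R' \in \cD_r:\ Q \subsetneqq R' \subset R} \phi_{R'}(x),
\]
so that $\phi^*_R(x) = \sup\{|T_{Q'}(x)| : x \in Q' \subsetneqq R\}$. Because each $\phi_{R'}$ is constant on its children, $T_Q$ is constant on $Q$; hence $\{x \in R : \phi^*_R(x) > 1\}$ is the disjoint union of the family $\cQ^*_1$ of maximal cubes $Q \subsetneqq R$ on which $|T_Q| > 1$, and hypothesis~(iii) gives $\sum_{Q \in \cQ^*_1} |Q| \le \delta |R|$.

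The crucial pointwise estimate to be established is
\[
\phi^*_R(x) \le \phi^*_Q(x) + 2 \qquad \text{for every } Q \in \cQ^*_1 \text{ and every } x \in Q.
\]
I will prove it by a case split on a dyadic $Q' \ni x$ with $Q' \subsetneqq R$. For strict ancestors $Q \subsetneqq Q' \subsetneqq R$, maximality of $Q$ in $\cQ^*_1$ forces $|T_{Q'}| \le 1$. For $Q' = Q$ the identity $T_Q = T_{\tilde Q} + \phi_{\tilde Q}$, with $\tilde Q \subseteq R$ the parent of $Q$, combined with $|T_{\tilde Q}| \le 1$ and $\|\phi_{\tilde Q}\|_\infty \le 1$, yields $|T_Q| \le 2$. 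For strict descendants $Q' \subsetneqq Q$, I use the splitting $T_{Q'}(x) = T_Q(x) + T^Q_{Q'}(x)$, where $T^Q_{Q'}(x) := \sum_{R':\ Q' \subsetneqq R' \subset Q} \phi_{R'}(x)$ is exactly the partial sum appearing in the definition of $\phi^*_Q(x)$, to conclude $|T_{Q'}(x)| \le 2 + \phi^*_Q(x)$. Summing the resulting inclusion $\{x \in Q : \phi^*_R(x) > t+2\} \subseteq \{x \in Q : \phi^*_Q(x) > t\}$ over $\cQ^*_1$ then produces
\[
\bigl|\{x \in R : \phi^*_R(x) > t+2\}\bigr| \le \mu(t) \sum_{Q \in \cQ^*_1} |Q| \le \delta\,\mu(t)\,|R|,
\]
which is the required recursion.

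The most delicate point is the ``controlled jump'' $|T_Q| \le 2$ on each stopping cube, and in particular the observation that in the descendant case the contribution of $\phi_Q$ is already absorbed into $T^Q_{Q'}$, hence into $\phi^*_Q$. Treating $\phi_Q$ as a separate unit-size error would inflate the jump to $3$, weakening the recursion to $\mu(t+3) \le \delta\,\mu(t)$ and the final exponent from $(t-1)/2$ to $(t-1)/3$.
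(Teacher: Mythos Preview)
Your proof is correct and is essentially the same stopping-time iteration as the paper's: both arguments select the maximal cubes where the partial sum first exceeds $1$, use the ``jump by at most $2$'' observation (parent sum $\le 1$ plus one term of size $\le 1$), and restart inside each stopping cube. The only difference is packaging---you encode the iteration as a recursion $\mu(t+2)\le\delta\,\mu(t)$ for the supremum ratio, while the paper builds nested sets $B_n$ on a fixed cube and shows directly that $\phi^*_{R_0}\le 2n-1$ off $B_n$ with $|B_n|\le\delta^n|R_0|$.
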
 
\begin{proof}
Let us prove the conclusion of the lemma for a fixed cube $R=R_0\in\cD_r$. 

Let $\cB_1$ be the collection of all maximal cubes $Q\in\cD_r$, $Q\subset R_0$ such that 
\begin{align}
\label{eq:J-N_stop-1}
\Bigl| \sum_{R\in\cD_r :\, Q\subsetneqq R \subset R_0} \phi\ci R(x) \Bigr|  > 1, \qquad x\in Q; 
\end{align}
note that the functions $\phi\ci R$ (and so the sum) are constant on the cube $Q$. 

Define the set $B_1$, 
\[
B_1:= \bigcup_{Q\in \cB_1} Q. 
\]
It follows from the construction that $\phi^*\ci{R_0}\le 1$ outside of $B_1$, and that for any $Q\in \cB_1$ the sum in \eqref{eq:J-N_stop-1} is at most $2$. Note also that by the assumption \cond3 we have that $|B_1| \le \delta |R_0|$. 

For each cube $\wt R\in\cB_1$ we repeat the above construction (with $\wt R$ instead of $R_0$); we will get a collection of stopping cubes $\cB_2$ and the set $B_2= \cup_{Q\in\cB_2} Q$, $B_2\subset B_1$, $|B_2| \le \delta^2 |R_0|$. It is easy to see that $\phi\ci{R_0}^* \le 2+1=3$ outside of $B_2$ and that for any cube $Q\in \cB_2$
\begin{align*}
\Bigl| \sum_{R\in\cD_r :\, Q\subsetneqq R \subset R_0} \phi\ci R(x) \Bigr| \le 4, \qquad x\in Q
\end{align*}
(sums outside of $\wt R \in\cB_1$ contribute at most $2$, and the sums starting at $\wt R\in\cB_1$ contribute at most $1$ outside of $B_2$ and at most $2$ on $Q\in \cB_2$. 

Repeating this procedure  we get the collections $\cB_n$ of ``stopping cubes'' and the decreasing sequence of sets $B_n=\cup_{Q\in\cB_n} Q$, such that 
\begin{align}
\label{|B_n|}
 |B_n| & \le \delta^n;  &&\\    
\label{phi_le_2n-1}
\phi^*\ci{R_0} & \le 2n-1  &&\text{outside of } B_n; \\
\notag
 \Bigl| \sum_{R\in\cD_r :\, Q\subsetneqq R \subset R_0} \phi\ci R(x) \Bigr| & \le 2n  &&\forall Q\in \cB_n, \ \forall x \in Q;
\end{align}
the last inequality is only needed for the inductive construction. 

Given $t>1$ let $n$ be the largest integer such that $2n-1\le t$, 
\[
n =\left\lfloor {(t+1)}/{2} \right\rfloor . 
\] 
By \eqref{phi_le_2n-1}
\[
\phi^*\ci{R_0} \le 2n-1 \le t\qquad \forall x\notin B_n, 
\]
so
\[
\bigl| \Bigl\{x\in R_0: \phi^*\ci{R_0}(x) > t \Bigr\} \bigr| \le |B_n| \le \delta^n \le \delta^{(t-1)/2} .
\]
This completes the proof for $t>1$, but for $0\leq t\leq 1$ the conclusion is trivial.
\end{proof}

As it was shown above in Theorem \ref{LPRnW}, the weak $L^1$ norm of a dyadic shift $\sha$ of complexity $r$, with scales separated by $r+1$ levels, can be estimated by $C=2^{d+2}\|\sha\|_2^2 + 5$, so the weak $L^1$ norm of our dyadic shift $\sha$ and all its subshifts $\sha\ci \cA$, $\cA\subset \cD_r$, can be estimated by
\begin{equation}
\label{eq:B_1}
B_1 = 2^{d+2} B_2^2 + 5, 
\end{equation}
where 
\[
B_2= \sup_{\cA\subset\cD} \|\sha\ci \cA\|\ci{L^2\to L^2} . 
\]

Now we need the following lemma, which is essentially Lemma 4.7 from \cite{LPR} with all constant written down; in fact, certain modifications in the argument are needed to avoid introducing exponential dependence on $r$, which was (implicitly) the case in \cite{LPR}. Such a modification (with linear dependence on $r$) was first obtained in Lemma~7.2 of \cite{H}; here we even achieve an estimate uniform with respect to $r$ by taking into account the separation of scales of our shift, and the resulting improvement in the estimate of Theorem~\ref{LPRnW}.

Let $\cP\subset\cD_r$ be a collection of cubes. Define the maximal function $f^*\ci{\!\!\cP}$ (compare with Definition \ref{df:maxf}) by 
\begin{equation}
\label{eq:f*PR}
f\ci{\!\!\cP}^* (x) := 
 \sup_{Q\in\cD_r: Q\ni x} \ \Bigl| \sum_{R\in\cP :\, Q\subsetneqq R} f\ci R(x) \Bigr|. 
\end{equation}
For the function $f\ci{\!\!\cP(R_0)}$, $R_0\in\cG^*$ defined above in the beginning of Section \ref{s:J-N} we have $|f\ci{\!\!\cP(R_0)}| \le f^*\ci{\!\!\cP(R_0)}$, so we will use $f^*\ci{\!\!\cP(R_0)}$ to estimate the distribution function of $|f\ci{\!\!\cP(R_0)}|$.

Note that for $R\in \cP$ we cannot guarantee that its children 
in $\cD_r$ are in $\cP$. So while in the above definition the 
sums are taken over all $R\in \cP$, we need to 
take supremum over $Q\in\cD_r$. 
\begin{lm}
\label{47}
Let $B_1$ is given by \eqref{eq:B_1}. Then for 
any $R\in \cG^*$ we have
\begin{align}
\label{48}
\Bigl| \biggl\{ x\in R: f^*\ci{\!\cP(R)} (x)> 16 t\frac{w(R)}{|R|} \biggr\}\Bigr|  & \le 2\sqrt2 \cdot 2^{-t/2B_1} |R|\,, \\
\label{49}
w^{-1} \Bigl( \biggl\{ x\in R: f^*\ci{\!\cP(R)} (x)> 20 t\frac{w(R)}{|R|} \biggr\} \Bigr)   & \le  24\cdot 2^{-t/2B_1} w^{-1}(R)\,,
\end{align}
\end{lm}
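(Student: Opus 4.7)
The strategy is to obtain \eqref{48} via the John--Nirenberg-type Lemma~\ref{lm:John-Nir-1} applied to a normalized version of the $f_Q$'s, and then to upgrade the resulting Lebesgue estimate to the $w^{-1}$ estimate \eqref{49} by a parallel stopping-time construction on the weight $w^{-1}$.

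For \eqref{48}, I would set $\phi_Q := \frac{|R|}{16\, w(R)}\, f_Q\, \1\ci{\cP(R)}(Q)$. Hypotheses (i) and (ii) of Lemma~\ref{lm:John-Nir-1} are nearly automatic: the scale separation of $\sha$ (it is the operator $\sha^j$, with scales separated by $r+1$ levels) forces $a_Q(\cdot,y)$ to vary on scales no finer than $2^{-r}\ell(Q)$, hence $f_Q$ is constant on $\cD_r$-children of $Q$ (whose side length is $2^{-(r+1)}\ell(Q)$); and the stopping-time bound \eqref{up4} gives $w(Q)/|Q| \le 4\,w(R)/|R|$ on $\cP(R)$, so $\|\phi_Q\|_\infty \le 1/4$. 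The substantial point is hypothesis~(iii). Using the same constancy, for $Q\in\cD_r$ with $Q\subsetneqq R'$ we have $f_{R'}(x) = \frac{1}{|Q|}\int_Q f_{R'}(y)\,dy$ for all $x\in Q$, so
\begin{align*}
\sum_{R'\in\cP(R),\,Q\subsetneqq R'\subseteq R''}\!\! f_{R'}(x)
= \frac{1}{|Q|}\int_Q \sha\ci{\cP(R)\cap\{R'\subseteq R''\}}(\1\ci{R''}w)\,dy
- \frac{1}{|Q|}\int_Q \sha\ci{\cP(R)\cap\{R'\subseteq Q\}}(\1\ci{Q}w)\,dy.
\end{align*}
Both right-hand terms are sub-shifts of~$\sha$, so Theorem~\ref{LPRnW} applies with the scale-separated weak $(1,1)$ constant $B_1 = 2^{d+2}B_2^2 + 5$, which is crucially $r$-independent. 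A Calder\'on--Zygmund stopping of $\1\ci{R''}w$ at height $\asymp w(R)/|R|$, combined with these weak-type bounds and the dyadic maximal theorem, then yields $|\{x\in R'' : \phi^*_{R''}(x) > 1\}| \le \delta\,|R''|$ with $\delta = 2^{-1/B_1}$, once the normalization constant $16$ is chosen large enough. (The cases $R''\cap R = \varnothing$ and $R''\supsetneqq R$ reduce trivially to $R''\subseteq R$, since $\phi_{R'}\equiv 0$ whenever $R'\not\subseteq R$.) Lemma~\ref{lm:John-Nir-1} then gives $|\{\phi^*_R > t\}| \le \delta^{(t-1)/2}|R|$, which rescales to~\eqref{48}.

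For \eqref{49} I would introduce a companion stopping family $\widetilde{\cG}^{**}(R)$ on $\cD_r$, defined exactly as $\cG^*$ but with $w^{-1}$ in place of~$w$. On the complement $E^* := R \setminus \bigcup \widetilde{\cG}^{**}_1(R)$, Lebesgue differentiation gives $w^{-1}(x) \le 4\,w^{-1}(R)/|R|$ a.e., so \eqref{48} (applied with threshold $20t$, i.e.\ $t \mapsto 5t/4$) transfers directly into the required $w^{-1}$ bound on~$E^*$. On the stopping cubes, the argument iterates at each $Q\in\widetilde{\cG}^{**}_1(R)$; the margin between the thresholds $16t$ in~\eqref{48} and $20t$ in~\eqref{49} absorbs the geometric loss per generation, while the total $w^{-1}$-mass of stopping cubes is controlled by the obvious $w^{-1}$-analogue of Lemma~\ref{35} (proved by the same Carleson-embedding argument with $w\leftrightarrow w^{-1}$).

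The main obstacle is the verification of hypothesis~(iii) above. The weak $(1,1)$ bound of Theorem~\ref{LPRnW} controls the shift $\sha$ itself, not its maximal truncation $\phi^*_{R''}$; bridging this requires the identification of $\phi^*_{R''}$ as a supremum of averages of two sub-shifts via the scale-separation constancy trick. Equally important is that $B_1$ is independent of~$r$ (via the improved scale-separated variant of Theorem~\ref{LPRnW}): this is precisely what ultimately keeps the weighted norm of $\sha$ in Theorem~\ref{t:sharp-shift-A2} polynomial, rather than exponential, in complexity.
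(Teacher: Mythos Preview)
Your general strategy for \eqref{48}---normalize the $f_Q$'s and feed them into Lemma~\ref{lm:John-Nir-1}, using the weak-type bound of Theorem~\ref{LPRnW} to verify hypothesis~(iii)---is indeed the right one, and your emphasis on the $r$-independence of $B_1$ via scale separation is exactly the point. But the execution has a real gap. With your normalization $\phi_Q=\tfrac{|R|}{16\,w(R)}f_Q$, the weak-type bound applied to the sub-shift acting on $\1_{R''}w$ gives
\[
\bigl|\{\phi^*_{R''}>1\}\bigr|\le B_1\cdot\frac{|R|}{16\,w(R)}\,w(R'')\le \frac{B_1}{4}\,|R''|
\]
for $R''\in\cP(R)$ (using only $w(R'')/|R''|\le 4\,w(R)/|R|$). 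Since $B_1\ge 5$, this is $>|R''|$, so hypothesis~(iii) fails; your claimed $\delta=2^{-1/B_1}$ has no derivation. The fix is simple: put a factor $1/B_1$ into the normalization. The paper does this, but also performs an additional splitting of $\cP(R)$ into layers $\cP_\alpha(R)$ according to $4^{-\alpha}\lesssim \tfrac{w(Q)/|Q|}{w(R)/|R|}\lesssim 4^{-\alpha+1}$, applies Lemma~\ref{lm:John-Nir-1} to each layer with $\delta=\tfrac12$, and then recombines via $f^*_{\cP(R)}\le\sum_\alpha f^*_{\cP_\alpha(R)}$. For \eqref{48} alone the layer splitting is not strictly needed, but it is essential for what comes next.

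For \eqref{49} your proposed $w^{-1}$-stopping iteration does not work as written, and the paper takes a completely different route. The difficulty with your scheme is that \eqref{48} bounds the Lebesgue measure of a level set inside $R$ by a multiple of $|R|$; restricting to a $w^{-1}$-stopping child $Q$ gives no improvement (still $|R|$, not $|Q|$), so the iteration does not gain anything, and the ``margin'' between $16t$ and $20t$ cannot absorb a sum over infinitely many generations. The paper instead exploits two pieces of structure you have not used: the $\cP_\alpha$ splitting and the fact that all cubes lie in $\cQ_k$, so that $w(Q)w^{-1}(Q)/|Q|^2\asymp 2^k$. Combined, these give $w^{-1}(Q)/|Q|\asymp 2^k 4^\alpha\,|R|/w(R)$ for every $Q\in\cP_\alpha(R)$. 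The level set $\{f^*_{\cP_\alpha(R)}>20t\,w(R)/|R|\}$ is covered by cubes whose $\cD_r$-parents lie in $\cP_\alpha(R)$; passing to the parents costs the extra $4t$ in the threshold, and on those parents the comparability above converts the already-established Lebesgue bound into a $w^{-1}$ bound directly, cube by cube. Summation over $\alpha$ then yields \eqref{49}. This use of the $\cQ_k$ slicing is the missing idea in your argument for \eqref{49}.
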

\begin{proof}
Now it is time to perform the last splitting. Namely, let us split the set $\cP(R)$ into the sets $\cP_\alpha(R)$, 
$\alpha\in \Z_+$, where the collection $\cP_\alpha = \cP_\alpha(R)$ consists of all cubes $Q\in \cP(R)$ for which
\begin{align}
\label{P_alpha}
4^{-\alpha } \frac{w(R)}{|R|} < \frac{w(Q)}{|Q|} \le 4^{-\alpha + 1} \frac{w(R)}{|R|}. 
\end{align}
Note, that by the construction of stopping moments
\[
\frac{w(Q)}{|Q|} \le 4 \frac{w(R)}{|R|}
\]
so we do not need $\alpha<0$. 

We can estimate
\[
f^*\ci{\! \cP(R)} \le \sum_{\alpha\in\Z_+} f^*\ci{\! \cP_\alpha (R)}
\]

Now let us estimate the level sets of $f^*\ci{\!\cP_\alpha(R)}$ using the above Lemma \ref{lm:John-Nir-1}. For $Q\in\cP_\alpha(R)$
\[
|f\ci Q(x)| \le \frac{w(Q)}{|Q|} \le 4^{-\alpha + 1} \frac{w(R)}{|R|} \le \frac{w(R)}{|R|} 2^{-2\alpha + 3}  B_1
\]
(recall that $B_1\ge 1$).

To this end, let
\begin{equation*}
   \phi_Q:=1_{\cP_{\al}(R)}(Q)\cdot\frac{2^{2\alpha-3}|R|}{B_1w(R)}\cdot f_Q,
\end{equation*}
so that $\phi_Q$ satisfies the first two assumptions of Lemma~\ref{lm:John-Nir-1}.

Recall the notation $\phi_{R_1}^*$ from Definition~\ref{df:maxf}. We want to use the weak type estimate for shifts to estimate the size of the set
\begin{equation*}
  \Bigl\{ x\in R_1 :  \phi^*_{R_1} > 1\Bigr\}.
\end{equation*}
Observe that this set is the union of the maximal cubes $M\in\mathscr{D}_r$ such that
\begin{equation*}
  \Bigl|\sum_{M:M\subsetneqq Q\subset R_1}\phi_Q(x)\Bigr|>1
\end{equation*}
for $x\in M$. Let $\mathscr{M}$ stand for the collection of these maximal cubes, and let
\begin{equation*}
  \mathscr{N}:=\{Q\in\mathscr{D}_r: Q\subset R_1;\not\exists M\in\mathscr{M}, Q\subset M\}.
\end{equation*}
Then
\begin{equation*}
  \Bigl\{ x\in R_1 :  \phi_{R_1}^*>1\Bigr\}
  =\Bigl\{ x\in R_1 :  \Bigl|\sum_{Q\in\mathscr{N}}\phi_Q\Bigr| > 1\Bigr\},
\end{equation*}
where
\begin{equation*}
  \sum_{Q\in\mathscr{N}}\phi_Q
  =\frac{2^{2\alpha-3}|R|}{B_1w(R)}\sum_{Q\in\cP_{\al}(R)\cap\mathscr{N}} f_Q
  =\frac{2^{2\alpha-3}|R|}{B_1w(R)}\sha_{\cP_{\al}(R)\cap\mathscr{N}}(\1_{R_1}w);
\end{equation*}
hence, by Theorem~\ref{LPRnW} and  $\|\1_{R_1}w\|_1=w(R_1)$,
\begin{equation*}
  \Bigl|\Bigl\{ x\in R_1 :  \phi_{R_1}^*>1\Bigr\}\Bigr|
  \leq B_1\frac{2^{2\alpha-3}|R|}{B_1 w(R)}w(R_1).
\end{equation*}
If $R_1\in\cP_{\al}(R)$, then the right side is directly dominated by $2^{2\alpha-3}\cdot 4^{-\alpha+1}|R_1|=\tfrac12|R_1|$. For an arbitrary $R_1\in\cD_r$, observe that $\phi_{R_1}^*=\sum_P\phi_P^*$, where the summation ranges over the maximal $P\in\cP_{\al}(R)$ with $P\subset R_1$. Since $\supp\phi_P^*\subset P$, and these cubes are disjoint, it follows that
\begin{equation*}
  \Bigl|\Bigl\{ x\in R_1 :  \phi_{R_1}^*>1\Bigr\}\Bigr|
  =\sum_P\Bigl|\Bigl\{ x\in P :  \phi_P^*>1\Bigr\}\Bigr|\leq\sum_P\frac12|P|\leq\frac12|R_1|.
\end{equation*}
Observing that
\begin{equation*}
  \phi_R^*=\frac{2^{2\alpha-3}|R|}{B_1w(R)}\cdot f_{\cP_{\al}(R)}^*,
\end{equation*}
Lemma~\ref{lm:John-Nir-1} implies that
\begin{equation*}
  \Bigl|\Bigl\{ x\in R :  f_{\cP_{\al}(R)}^*>t\frac{w(R)}{|R|}2^{-2\alpha+3} t\Bigr\}\Bigr|
  =\Bigl|\Bigl\{ x\in R :  \phi_R^*>t\Bigr\}\Bigr|\leq 2^{-(t-1)/2}|R|.
\end{equation*}


%
Rescaling  $t$ we can rewrite the inequality as  
\begin{equation}
\label{|E_alpha|}
\bigl| \Bigl\{ x\in R :  f^*\ci{\!\cP_\alpha(R)}(x) > 16 t \frac{w(R)}{|R|} 
\Bigr\} \bigl| \le \sqrt2 \cdot 2^{- t 4^{\alpha} /B_1} |R|  \qquad \forall t>0. 
\end{equation}
Denote the set above as $E_\alpha(t)$, 
\[
E_\alpha(t) :=\Bigl\{ x\in R :  f^*\ci{\!\cP_\alpha(R)}(x) > 16 t \frac{w(R)}{|R|} 
\Bigr\}. 
\]

We want to estimate the set where 
\[
\sum_{\alpha=0}^\infty f^*\ci{\!\!\cP_\alpha(R)}(x) >T .
\]
If this happens for $x\in R$, then either $f^*\ci{\!\!\cP_0(R)}(x) >T/2$, or 
\[
\sum_{\alpha=1}^\infty f^*\ci{\!\!\cP_\alpha(R)}(x) >T/2 .
\]
The latter inequality implies that either $f^*\ci{\!\!\cP_0(R)}(x) >T/4$ or 
\[
\sum_{\alpha=1}^\infty f^*\ci{\!\!\cP_\alpha(R)}(x) >T/4 , 
\]
and so on. 

Repeating this reasoning with $T= 16 w(R) t/|R|$, we can see that 
\begin{align*}
\Bigl\{ x\in R :  f^*\ci{\!\cP (R)}(x) > 16 t \frac{w(R)}{|R|} 
\Bigr\} \subset \bigcup_{\alpha\ge0} E_\alpha(2^{-\alpha-1} t)
\end{align*}
so using \eqref{|E_alpha|} we get
\begin{align*}
|R|^{-1} \bigl|  \Bigl\{ x\in R :  f^*\ci{\!\cP(R)}(x) > 16 t \frac{w(R)}{|R|} 
\Bigr\} \bigr| 
&\le \sqrt 2 \sum_{\alpha=0}^\infty 2^{- t \cdot 2^{\alpha-1}/B_1} &&
 \\
&\le \sqrt2 \sum_{\alpha=0}^\infty 2^{- t/2B_1 -\alpha}  && \text{if } t\ge 2B_1 \\
& \le 2\sqrt2\cdot 2^{- t/2B_1}
\end{align*}
which proves \eqref{48}. We have proved \eqref{48} for $t\ge 2B_1$, but for $t<2B_1$ this estimate is trivial, because the right side is greater than $|R|$. Thus, \eqref{48} holds for all $t>0$. 

To prove \eqref{49}, let us first recall that all our cubes are in $\cQ=\cQ_k$, so  \eqref{eq:Q_k} holds for all of them. If, in addition $Q\in \cP_{\alpha}(R)$, then  \eqref{P_alpha} (the definition of $\cP_\alpha(R)$) is satisfied, and combining these two estimates we get 
\begin{align}
    \label{eq:P_alpha-2}
    2^k 4^{\alpha-1} \frac{|R|}{w(R)}  \le  \frac{w^{-1}(Q)}{|Q|}   \le  
    2^{k+1}4^{\alpha} \frac{|R|}{w(R)}  \qquad \forall Q\in \cP_\alpha(R). 
\end{align}
So $w^{-1}(Q)$ can be estimated via $|Q|$, so we will use the known estimates of the Lebesgue measure of level sets to get the estimates of the $w^{-1}$ measure. 

Let us consider the set  where 
\begin{align*}
f^*\ci{\!\!\cP_\alpha(R)}(x) > 20 t \frac{w(R)}{|R|} .
\end{align*}
This set is a disjoint union of cubes $Q'\in\cD_r$, which are the first (maximal) cubes $Q$ for which the sum  in \eqref{eq:f*PR} defining $f^*\ci{\!\!\cP_\alpha(R)}$  
exceeds $20t \cdot w(R)/|R|$. Unfortunately the cubes $Q'$ are not necessarily in $\cP_\alpha(R)$, so we cannot use \eqref{eq:P_alpha-2} for them. But their parents are in $\cP_\alpha(R)$ (because the summation is over $\cP_\alpha(R)$)!

So, let $\cE_\alpha(t)$ be the collection of such parents, and let  
\[
\wt E_\alpha(t):= \bigcup_{Q\in \cE_\alpha(t)}  Q.
\]
 Note, that to get $\wt E_\alpha(t)$ it is sufficient  to take the union of the maximal cubes $Q\in \cE_\alpha(t)$, so the set $\wt E_\alpha(t)$ is a disjoint union of cubes $Q\subset \cP_\alpha(R)$. Since for $Q \in\cP_\alpha(R)$
\[
|f\ci Q(x) | \le \frac{w(Q)}{|Q|} \le 4^{-\alpha+1} \frac{w(R)}{|R|} ,
\]
we can conclude that for all  $Q\in \cE_\alpha(t)$ and all $t\ge 4^{-\alpha}$
\begin{align*}
\Bigl|  \sum_{R'\in \cP_\alpha(R): \, Q\subsetneqq R'}  f\ci{R'}(x) \Bigr| \ge 20 t \frac{w(R)}{|R|} - 4 \cdot 4^{-\alpha} \frac{w(R)}{|R|} \ge 16t \frac{w(R)}{|R|} \qquad \forall x\in Q 
\end{align*}
(because the corresponding sum for one of the children $Q'$ of $Q$ exceeds $20 t \cdot w(R)/|R|$ on $Q'$, and the difference between the two sums is $f\ci Q$; we also use that the sum in the left hand side is constant on $Q$). 

So $f^*\ci{\!\!\cP_\alpha(R)} (x) > 16 t \cdot w(R)/|R|$ on $Q$, and we conclude that for $t\ge 4^{-\alpha}$ the inclusion $\wt E_\alpha(t)\subset E_\alpha(t)$ holds. Using the estimate \eqref{|E_alpha|} for $|E_\alpha(t)|$ (and replacing $\sqrt2$ by $2$ there) we get that for $t\ge 4^{-\alpha}$
\begin{align}
\label{|wtE_alpha|}
|\wt E_\alpha(t) | \le 2  \cdot 2^{- t 4^{\alpha} /B_1} |R| . 
\end{align}
Note that for $t<4^{-\alpha}$ the above estimate is trivial, so it holds for all $t>0$.  

Since by \eqref{eq:P_alpha-2} for all $Q\in \cP_\alpha(R)$
\[
w^{-1}(Q) \le  2^{k+1} 4^{\alpha} \frac{|R|}{w(R)} |Q|
\]
summing over maximal cubes in $\cE_\alpha(t)$ we get 
\begin{align}
\notag
w^{-1}(\wt E_\alpha(t) ) & \le 2^{k+1} 4^{\alpha} \frac{|R|}{w(R)} |\wt E_\alpha(t)|  &&
\\
\notag
&\le   2^{k+1} 4^{\alpha} \frac{|R|}{w(R)} 2  \cdot 2^{- t 4^{\alpha} /B_1} |R|   && \text{by } \eqref{|wtE_alpha|} 
\\
\label{eq:wE_alpha}
& \le 4^{\alpha}\cdot 2^2\cdot   2^{- t 4^{\alpha} /B_1} w^{-1}(R)       &&  \text{by } \eqref{eq:Q_k}  
\end{align}

%
Now we want to estimate $w^{-1}(\wt E(t))$, where 
\[
\wt E(t) := \Bigl\{ x\in R :\, f^*\ci{\!\!\cP} (x) > 20 t \frac{w(R)}{|R|} \Bigr\}  .
\]
Let $T:= 20 t \cdot w(R)/R$. If for $x\in R$
\[
\sum_{\alpha=0}^\infty f^*\ci{\!\!\cP_\alpha(R)}(x) >T , 
\]
then either $f^*\ci{\!\!\cP_0(R)}(x) > T/2$ (in which case $x\in \wt E_0(t/2)$) or
\[
\sum_{\alpha=1}^\infty f^*\ci{\!\!\cP_\alpha(R)}(x) >T/2 .
\]
If the latter inequality holds, then either $f^*\ci{\!\!\cP_1(R)}(x) > T/4$, so $x\in \wt E_0(t/4)$, or 
\[
\sum_{\alpha=2}^\infty f^*\ci{\!\!\cP_\alpha(R)}(x) >T/4 .
\]
Repeating this reasoning we get that 
\[
\wt E(t) \subset \bigcup_{\alpha\ge 0} \wt E_\alpha(t2^{-\alpha-1}), 
\]
so 
\begin{align*}
w^{-1} (\wt E(t)) & \le \sum_{\alpha= 0}^\infty w^{-1} ( \wt E_\alpha(t2^{-\alpha-1})  )   &&
\\
& \le  4 w^{-1}(R)  \sum_{\alpha= 0}^\infty  4^{\alpha} 2^{- t 2^{\alpha - 1} /B_1}       &&  \text{by } \eqref{eq:wE_alpha}
\\
& \le 4 w^{-1}(R)\cdot 6\cdot 2^{-t/2B_1} && \text{if } t\ge 2B_1. 
\end{align*}
To prove the last inequality we need for $t\ge 2B_1$ to estimate the sum 
\[
\sum_{\alpha=0}^\infty 2^{2\alpha - t 2^\alpha /2B_1}. 
\]
Since $2^\alpha \ge 3\alpha +2$ for $\alpha \ge 4$, we can estimate for $\alpha\ge 4$ and $t\ge 2B_1$
\begin{align*}
2\alpha - t 2^\alpha/2B_1 & \le 2\alpha - t\cdot (3\alpha +2)/2B_1  
\\
& = \bigl(2\alpha - 2\alpha t /2B_1\bigr) - \alpha t /2B_1 - 2 t /2B_1 
\\
& \le 0  -\alpha -  t/2B_1, 
\end{align*}
so 
\[
\sum_{\alpha=4}^\infty 2^{2\alpha - t 2^\alpha /2B_1} \le 2^{-t/2B_1} \sum_{\alpha=4}^\infty 2^{-\alpha} < \frac12\cdot2^{-t/2B_1}  .
\]
For $\alpha=0, 1, 2, 3$ we can estimate
\[
2^{2\alpha - t 2^\alpha /2B_1} \le c_\alpha 2^{-t/2B_1}   , \qquad \text{where } \ c_0=1,\ c_1=c_2=2, \ c_3=\frac12,
\]
so adding everything we get that
\[
w^{-1} (\wt E(t)) \le 24 \cdot 2^{-t/2B} w^{-1}(R).  
\] 
We proved that estimate for $t\ge 2B_1$, but for $t<2B_1$ the estimate is trivial because the right side is bigger than $w^{-1}(R)$. So the estimate holds for all $t>0$. 
\end{proof}

\subsection{Conclusion of the proof}

\begin{lm}
For any $R\in \cG^*$
\begin{align}
\label{norm-f_P(R)-1}
\|f\ci{\!\!\cP(R)} \|\ci{L^2}  & \le C_1 B_1\frac{w(R)}{|R|} |R|^{1/2}, \\
\label{norm-f_P(R)-w}
\|f\ci{\!\!\cP(R)} \|\ci{L^2(w^{-1})} & \le C_1 B_1 \frac{w(R)}{|R|}  \sqrt{w^{-1}(R)}, 
\end{align}
where $C_1$ and $C_2$ are absolute constants and $B_1$ is given by \eqref{eq:B_1}
\end{lm}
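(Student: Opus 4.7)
The plan is to obtain both norm bounds by integrating the tail estimates of Lemma~\ref{47}. Since $|f\ci{\!\!\cP(R)}(x)|\le f^*\ci{\!\!\cP(R)}(x)$ pointwise (as noted at the beginning of Section~\ref{s:J-N}), it suffices to bound the $L^2$ and $L^2(w^{-1})$ norms of the maximal function $f^*\ci{\!\!\cP(R)}$. For both norms I would use the layer-cake representation
\[
\|f^*\ci{\!\!\cP(R)}\|_{L^2(\sigma)}^2=\int_0^\infty 2\lambda\,\sigma\bigl(\{x\in R:f^*\ci{\!\!\cP(R)}(x)>\lambda\}\bigr)\,d\lambda,
\]
taking $\sigma=dx$ for \eqref{norm-f_P(R)-1} and $d\sigma=w^{-1}dx$ for \eqref{norm-f_P(R)-w}.

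For the unweighted bound I would change variables by $\lambda=16t\,w(R)/|R|$ and insert the distribution estimate \eqref{48}. The integrand then becomes a constant multiple of $t\cdot 2^{-t/(2B_1)}$ times $\bigl(w(R)/|R|\bigr)^2|R|$, so everything reduces to the elementary integral
\[
\int_0^\infty t\cdot 2^{-t/(2B_1)}\,dt=\frac{(2B_1)^2}{(\ln 2)^2},
\]
which is finite and scales like $B_1^2$. Taking a square root produces the desired estimate $\|f\ci{\!\!\cP(R)}\|_{L^2}\le C_1 B_1\,\bigl(w(R)/|R|\bigr)|R|^{1/2}$ with a universal $C_1$.

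For the weighted bound the argument is structurally identical: I would use the change of variable $\lambda=20t\,w(R)/|R|$ together with \eqref{49}, which replaces the factor $|R|$ by $w^{-1}(R)$ but leaves the tail $t\cdot 2^{-t/(2B_1)}$ unchanged. The same elementary integral then yields $\|f\ci{\!\!\cP(R)}\|_{L^2(w^{-1})}^2\le C_1^2 B_1^2\,\bigl(w(R)/|R|\bigr)^2 w^{-1}(R)$, and a square root gives \eqref{norm-f_P(R)-w}.

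There is no real obstacle here, because Lemma~\ref{47} already encodes all the nontrivial work—the exponential decay of the distribution functions compensates for the linear factor $\lambda$ appearing in the layer-cake formula, so both integrals are absolutely convergent and explicit. The only point requiring some care is to keep track of the numerical constants ($16$, $20$, $2\sqrt{2}$, $24$) coming from Lemma~\ref{47}, together with the value of $\int_0^\infty t\,2^{-at}\,dt=(a\ln 2)^{-2}$, so that the resulting constant $C_1$ is verifiably absolute and, in particular, independent of $R$, of the weight $w$, and of the shift $\sha$.
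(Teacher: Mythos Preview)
Your proposal is correct and follows exactly the approach sketched in the paper: compute the $L^2$ and $L^2(w^{-1})$ norms of $f^*\ci{\!\!\cP(R)}$ via the layer-cake formula together with the distributional inequalities \eqref{48} and \eqref{49} from Lemma~\ref{47}, then use $|f\ci{\!\!\cP(R)}|\le f^*\ci{\!\!\cP(R)}$. The paper itself leaves the details as ``a trivial exercise for the reader,'' and what you have written is precisely the intended exercise.
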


This lemma is proved by using the distributional inequalities from Lemma \ref{47} and computing the norms using distribution functions. That will give the desired estimates for the norms of the maximal function $f^*\ci{\!\!\cP(R)}$, and since $| f\ci{\!\!\cP(R)}| \le f^*\ci{\!\!\cP(R)}$, we get the conclusion of the lemma. We leave the details as a trivial exercise for the reader.  

Recall, that to prove the main result we need to prove estimate \eqref{eq:sum_Q_k} for all cubes $Q_0 \in \cQ =\cQ_k$. For a cube $Q\in \cQ$, let $\cQ(Q) := \{Q'\in \cQ:\, Q'\subset Q\}$. We want to estimate $\|f\ci{\cQ(Q_0)}\|\ci{L^2(w^{-1})}$, $Q_0\in \cQ$, where 
\[
f\ci{\cQ(Q_0)} := \sum_{Q\in \cQ(Q_0)} f\ci Q. 
\]
Since (see \eqref{dec-f_QQ_0})
\[
f\ci{\cQ(Q_0)} = \sum_{Q\in \cG^*(Q_0)} f\ci{\!\!\cP(Q)} , 
\]
we can write 
\begin{align*}
\| f\ci{\cQ(Q_0)} \|\ci{L^2(w^{-1})}^2 & \le \sum_{R\in \cG^*(Q_0)} \| f\ci{\!\!\cP(R)} \|\ci{L^2(w^{-1})}^2 + 
2 \ \sum_{R, Q \in \cG^*(Q_0):\, Q\subsetneqq R} \bigl| \langle f\ci{\!\!\cP(R)} , f\ci{\!\!\cP(Q)} \rangle_{w^{-1}} \bigr|
\\
& = S_1 + S_2. 
\end{align*}
The first sum is easy to estimate. By \eqref{norm-f_P(R)-w}
\begin{align*}
\|f\ci{\!\!\cP(R)} \|\ci{L^2(w^{-1})}^2  & \le [C_1 B_1]^2 \frac{w(R)^2}{|R|^2} w^{-1}(R), &&
\\
& \le [C_1 B_1]^2  2^{k+1} w(R) .  && \text{because } R \in \cQ=\cQ_k
\end{align*}


Summing over all $R\in \cG^*= \cG^*(Q_0)$ we get  using \eqref{carlLW}
\begin{align*}
S_1 \le  2 [C_1 B_1]^2 2^k \sum_{R\in \cG^*(G_0)} w(R) \le C  B_1^2 2^k [w]\ci{A_2} w(Q_0), 
\end{align*}
where $C$ is an absolute constant.

Let us now estimate $S_2$.

Let $Q, R\in \cG^*$, $Q\subsetneqq R$. Then $f\ci{\!\!\cP(R)}(x)$ is constant on $Q$, let us use the symbol 
$f\ci{\!\!\cP(R)}(Q)$ to denote this constant. 
We then can estimate 
\begin{align}
\notag
\left| \langle f\ci{\!\!\cP(R)}, f\ci{\!\!\cP(Q)}\rangle_{w^{-1}} \right|  & \le | f\ci{\!\!\cP(R)}(Q) | \cdot (w^{-1}(Q))^{1/2} 
\| f\ci{\!\!\cP(Q)}\|\ci{L^2(w^{-1})} && \text{by Cauchy--Schwartz} 
\\
\notag
& \le C_1B_1 | f\ci{\!\!\cP(R)}(Q) | \frac{w^{-1}(Q) w(Q)}{|Q|}   && \text{by } \eqref{norm-f_P(R)-w}
\\
\label{fPR,fPQ}
& \le C_1B_1 | f\ci{\!\!\cP(R)}(Q) | 2^{k+1} \cdot |Q| && \text{because } Q\in \cQ_k.
\end{align}


Using this estimate we can write 
\begin{align*}
S_2 (R) & := \sum_{Q \in \cG^*:\, Q\subsetneqq R} \left| \langle f\ci{\!\!\cP(R)}, f\ci{\!\!\cP(Q)}\rangle_{w^{-1}} \right|  
&& 
\\
& \le 2^{k+1} C_1 B_1 \sum_{Q \in \cG^*:\, Q\subsetneqq R} |f\ci{\!\!\cP(R)}(Q)|  \cdot  |Q|  
&& \text{by } \eqref{fPR,fPQ}
\\
& = 2^{k+1} C_1 B_1 \int_R | f\ci{\!\!\cP(R)} | \sum_{Q \in \cG^*:\, Q\subsetneqq R} \1\ci Q \ dx  &&
\\
& \le 2^{k+1} C_1 B_1 \bigl\| f\ci{\!\!\cP(R)} \bigr\|_2 \cdot \Bigl\| \sum_{Q \in \cG^*:\, Q\subsetneqq R} \1\ci Q \Bigr\|_2 && \text{by Cauchy--Schwartz}
\\
& \le 2^{k+2} [C_1 B_1]^2 w(R)   && \text{by } \eqref{norm-f_P(R)-1} \text{ and } \eqref{qadrL}
\end{align*}
Therefore, using \eqref{carlLW}
\begin{align*}
S_2 \le 2^{k+1} [C_1 B_1]^2  & \le   2^{k+1} [C_1 B_1]^2  \sum_{R\in \cG^*(Q_0)} w(R) 
\\
& \le C (B_1)^2 2^k [w]\ci{A_2} w(Q_0) 
\end{align*}
with some absolute constant $C$. But that is exactly the estimate \eqref{eq:sum_Q_k}, so Theorem \ref{t:sharp-shift-A2} is proved.    \hfill \qed




\begin{thebibliography}{XXX}
\label{rf}

\bibitem{AIS} {\sc K. Astala, T. Ivanec, E. Saksman, } {\em Betrami operators in the plane}, Duke Math J., 107 (2001), 27-56.

\bibitem{Buck1} {\sc S. M. Buckley}, {\em Estimates for operator norms on weighted spaces 
and reverse Jensen inequalities},  Trans. Amer. Math. Soc., {\bf 340} (1993), no. 1, p53--272.



\bibitem{OB} {\sc O.  Beznosova}, {\em Linear bound for the dyadic paraproduct on weighted Lebesgue space $L^2(w)$}, J. Funct. Analysis, {\bf 255} (2008), No. 4, 994--1007.

\bibitem{CUMP1}{\sc D. Cruz-Uribe, J. Martell, C. Perez}, {\em Sharp weighted estimates for approximating dyadic operators}, accepted in Electronic Research Announcements in the Mathematical Sciences


\bibitem{CUMP2}{\sc D. Cruz-Uribe, J. Martell, C. Perez}, {\em Sharp weighted estimates for classical operators,} arXiv:1001.4724.


\bibitem{Dav} {\sc G. David,}  {\em Analytic capacity, Calder\'on-Zygmund operators, and rectifiability},
Publ. Mat., {\bf 43} (1999), 3--25.

\bibitem{Dr-Volb-A2_Beurling_2003}
{\sc Oliver Dragi{\v{c}}evi{\'c} and Alexander Volberg}, \emph{Sharp estimate of the
  {A}hlfors-{B}eurling operator via averaging martingale transforms}, Michigan
  Math. J. \textbf{51} (2003), no.~2, 415--435. 
  
  
\bibitem{Duren1970}
{\sc Peter~L. Duren}, \emph{Theory of {$H\sp{p}$} spaces}, Pure and Applied
  Mathematics, Vol. 38, Academic Press, New York, 1970.
  
  
\bibitem{F} {\sc T. Figiel}, {\em  Singular integral operators: a martingale approach}, Geometry of Banach spaces  (Strobl,
1989), London Math. Soc. Lecture Note Ser., vol. 158, Cambridge Univ. Press, Cambridge, 1990, pp. 95--110. 

\bibitem{FP} {\sc  R.  Fefferman, J. Pipher} {\em  Multiparameter operators and sharp weighted inequalities,}  Amer. J. Math. 119 (1997), no. 2, 337Ð369.

\bibitem{Grafakos_CFA-book_2008}
{\sc L.~Grafakos}, \emph{Classical {F}ourier analysis}, second ed., Graduate
  Texts in Mathematics, vol. 249, Springer, New York, 2008.


\bibitem{H} {\sc T.~Hyt\"{o}nen}, {\em The sharp weighted bound for general Calder\'on-Zygmund operators},  arXiv:1007.4330.

\bibitem{H1} {\sc T.~Hyt\"{o}nen}, {\em The vector-valued nonhomogeneous $Tb$ theorem},  arXiv:0809.3097.

\bibitem{HLRSUTV} {\sc T.~Hyt\"{o}nen, M. Lacey, M. C. Reguera, E. Sawyer, I. Uriarte-Tuero,  A. Vagharshakyan}, \emph{Weak and Strong type $ A_p$ Estimates for \cz Operators},
arXiv:1006.2530.


\bibitem{HMW} {\sc R. Hunt, B. Muckenhoupt, R. Wheeden}, {\em Weighted norm inequalities  for the conjugate function and the
Hilbert transform}, Trans. Amer. Math. Soc., {\bf 176} (1973), pp. 227-251.



\bibitem{LOP}{\sc A. Lerner, S. Ombrosi, C. P\'erez}, {\em A1 bounds for \cz operators related to a
problem of Muckenhoupt and Wheeden}, Math. Res. Lett., 16 (2009) 
no. 1, 149-156.

\bibitem{LPR}{\sc M. Lacey, S. Petermichl, M. Reguera}, {\em Sharp ${A}_2$ inequality for {H}aar shift operators}, Math. Ann., {\bf 348} (2010), 127--141.


\bibitem{Ler1} {\sc A. Lerner}, {\em A pointwise estimate for local sharp maximal
                  function with applications to singular integrals},  preprint, 2009.
                  
\bibitem{NT}
{\sc F.~L. Nazarov and S.~R. Treil}, \emph{The hunt for a {B}ellman
   function: applications to estimates for singular integral operators
and to
   other classical problems of harmonic analysis}, Algebra i Analiz
\textbf{8}
   (1996), no.~5, 32--162.
   
\bibitem{NTV1} {\sc F.~Nazarov, S.~Treil, and A.~Volberg,} {\em Cauchy
Integral and Calder\'{o}n-Zygmund  operators on nonhomogeneous 
spaces}, International Math. Research Notices, {\bf 1997}, No. 15, 103--726.


\bibitem{NTV2} {\sc F.~Nazarov, S.~Treil, and A.~Volberg,} {\em Weak type
estimates and  Cotlar inequalities for  Calder\'{o}n-Zygmund
operators on nonhomogeneous spaces}, International Math. Research
Notices, {\bf 1998}, No. 9, p.~463--487.

\bibitem{NTV3} {\sc F.~Nazarov, S.~Treil, and A.~Volberg,} {\em Accretive system $Tb$ theorems
on nonhomogeneous spaces}, Duke Math. J., {\bf 113} (2002), no. 2,  259--312.

\bibitem{NTV4}  {\sc F.~Nazarov, S.~Treil, and A.~Volberg,} {\em Nonhomogeneous 
$Tb$ theorem which proves Vitushkin's conjecture}, Preprint No. 519, CRM, Barcelona, 2002, 1-84.


\bibitem{NTV5} {\sc F.~Nazarov, S.~Treil, and A.~Volberg,} {\em The $Tb$ theorem on non-homogeneous spaces},  Acta Math., {\bf 190}  (2003), 151--239.

\bibitem{NTV6} {\sc F.~Nazarov, S.~Treil, and A.~Volberg,} {\em Two weight inequalities for individual Haar multipliers and other well localized operators}, 
 Math. Res. Lett. 15 (2008), no. 3, 583--597.
 
\bibitem{NTVlost} {\sc F.~Nazarov, S.~Treil, and A.~Volberg,}  {\em Two weight estimate for the Hilbert transform and corona decomposition for non-doubling measures}, Preprint 2005, 1-33. Put into arXive in 2010.

\bibitem{NTV7} {\sc F.~Nazarov, S.~Treil, and A.~Volberg,} 
{\em Two weight $T1$ theorem for the Hilbert transform: the case of doubling measures}, Preprint 2004, 1--40.

\bibitem{NTV-name} {\sc F.~Nazarov, S.~Treil and  A.~Volberg}, 
\emph{Bellman function in stochastic control and harmonic analysis.} 
Systems, approximation, singular integral operators, and related topics 
(Bordeaux, 2000),  393--423, Oper. Theory Adv. Appl., \textbf{129}, 
Birkh\"{a}user, Basel, 2001.

\bibitem{NTV-2w} {\sc F.~Nazarov, S.~Treil, and A.~Volberg,} {\em The Bellman functions and two-weight inequalities for 
Haar multipliers}, J. of Amer. Math. Soc., {\bf 12}, (1999), no. 4, 909-928.

\bibitem{Petm} {\sc S. Petermichl}, {\em Dyadic shifts and a logarithmic estimate for Hankel operators with matrix symbol}, 
C. R. Acad. Sci. Paris, S\'er. I Math., {\bf 330}, (2000), no. 6, 455-460.

\bibitem{PetmV} {\sc S. Petermichl, A. Volberg}, {\em Heating of the Ahlfors-Beurling operator: weakly quasiregular maps on the
plane are quasiregular}, Duke Math. J., {\bf 112} (2002), no. 2, 281-305.

\bibitem{Petm1} {\sc S. Petermichl}, {\em The sharp bound for the Hilbert transform on weighted Lebesgue spaces in terms of the classical $A_p$ characteristic. }Amer. J. Math. 129 (2007), no. 5, 1355--1375. 

\bibitem{Petm2} {\sc S. Petermichl}, {\em The sharp weighted bound for the Riesz transforms.} Proc. Amer. Math. Soc. 136 (2008), no. 4, 1237--1249.

\bibitem{Petm3} {\sc S. Petermichl}, {\em Dyadic shifts and a logarithmic estimate for Hankel operators with matrix symbol.} C. R. Acad. Sci. Paris S\'er. I Math. 330 (2000), no. 6, 455--460. 

\bibitem{PTV1}  {\sc C. P\'erez, S. Treil, A. Volberg}, {\em On $A_2$ conjecture and corona decomposition of weights}, arxiv1005.2630.


\bibitem{Saw1} {\sc E. Sawyer}, {\em A characterization of a two-weight norm inequality for maximal operators}, Studia Math.,
{\bf 75} (1982), no. 1, pp. 1-11.

\bibitem{Saw2} {\sc E. Sawyer}, {\em Two weight norm inequalities for certain maximal and integral operators}, Lecture Notes in Math.,  {\bf 908} (1982), 102--127.


\bibitem{To1}{\sc X. Tolsa}, {\em  $L^2$ boundedness for the Cauchy linear operator for continuous measures},  Duke Math. J., {\bf 98} (1999), no. 2, 269--304.


\bibitem{Wav_PastFuture}
{\sc S.~Treil and A.~Volberg}, \emph{Wavelets and the angle between past and future},
  J. Funct. Anal. \textbf{143} (1997), no.~2, 269--308.

\bibitem{V}{\sc A. Volberg}, {\em Matrix $A_p$ weights via $S$-function},  J.  Amer. Math. Soc., {\bf 10} (1997), no. 2, 445--466.



\bibitem{VolLip}{\sc A. Volberg}, {\em  Calder\'on--Zygmund capacities and operators on nonhomogeneous spaces},  CBMS Lecture Notes, Amer. Math. Soc., {\bf 100} (2003), pp. 1--167.

\bibitem{Vagh_HaarShift_2009}
{\sc Armen Vagharshakyan}, \emph{Recovering singular integral kernels from Haar
  shifts}, Proc. Amer. Math. Soc., {\bf 138} (2010), 4303--4309.

\bibitem{Wit}{\sc J. Wittwer}, {\em A sharp estimate on the norm of the martingale transform.} Math. Res. Lett. 7 (2000), no. 1, 1--12.

\bibitem{X} {\sc Y. Q. Xiang,} {\em Fast algorithms for 
Calder—n-Zygmund singular integral operators,} 
Appl. Comput. Harmon.Anal. 3 (1996), no. 2, 120Ð126.

\end{thebibliography}
\end{document}